\newcommand{\nc}{\newcommand}
\newcommand{\delete}[1]{}
\nc{\dfootnote}[1]{{}}          %{{}}
\nc{\ffootnote}[1]{\dfootnote{#1}}
\nc{\mfootnote}[1]{{}}        % Use this to suppress footnotes
\nc{\ofootnote}[1]{{}}        % Use this to suppress footnotes
\nc{\mfootnote}[1]{\footnote{#1}} % Use this to show footnotes
\nc{\ofootnote}[1]{\footnote{\tiny Older version: #1}} % Use this to show footnotes
\nc{\mlabel}[1]{\label{#1}}  % Use this to suppress names
\nc{\mcite}[1]{\cite{#1}}  % Use this to suppress names
\nc{\mref}[1]{\ref{#1}}  % Use this to suppress names
\nc{\mkeep}[1]{{}}      % Use this to suppress marginpar
\nc{\mbibitem}[1]{\bibitem{#1}} % Use this to show number name
\nc{\mcite}[1]{\cite{#1}{{\bf{{\ }(#1)}}}}  % Use this lines to show names
\nc{\mlabel}[1]{\label{#1}  % Use the next two lines to show names
{\hfill \hspace{1cm}{\bf{{\ }\hfill(#1)}}}}
\nc{\mref}[1]{\ref{#1}{{\bf{{\ }(#1)}}}}  % Use this lines to show names
\nc{\mbibitem}[1]{\bibitem[\bf #1]{#1}} % Use this to show name
\nc{\mkeep}[1]{\marginpar{{\bf #1}}} % Use this to show marginpar
\newtheorem{theorem}{Theorem}[section]
\newtheorem{prop}[theorem]{Proposition}
\newtheorem{defn}[theorem]{Definition}
\newtheorem{lemma}[theorem]{Lemma}
\newtheorem{coro}[theorem]{Corollary}
\newtheorem{prop-def}{Proposition-Definition}[section]
\newtheorem{remark}[theorem]{Remark}
\nc{\bond}{\vdash}
\nc{\comp}[1]{\langle #1\rangle}
\nc{\spr}{\cdot}
\nc{\disp}[1]{\displaystyle{#1}}
\nc{\bin}[2]{ (_{\stackrel{\scs{#1}}{\scs{#2}}})}  %binomial coeff
\nc{\binc}[2]{ \left (\!\! \begin{array}{c} \scs{#1}\\
    \scs{#2} \end{array}\!\! \right )}  %binomial coeff
\nc{\bbinc}[2]{ \left (\!\! \begin{array}{c} {#1}\\
    {#2} \end{array}\!\! \right )}  %binomial coeff
\nc{\bincc}[2]{  \left ( {\scs{#1} \atop
    \vspace{-.5cm}\scs{#2}} \right )}  %binomial coeff
\nc{\sarray}[2]{\begin{array}{c}#1 \vspace{.1cm}\\ \hline
    \vspace{-.35cm} \\ #2 \end{array}}
\nc{\bs}{\bar{S}}
\nc{\dcup}{\stackrel{\bullet}{\cup}}
\nc{\dbigcup}{\stackrel{\bullet}{\bigcup}}
\nc{\la}{\longrightarrow}
\nc{\fe}{\'{e}}
\nc{\rar}{\rightarrow}
\nc{\dar}{\downarrow}
\nc{\dap}[1]{\downarrow \rlap{$\scriptstyle{#1}$}}
\nc{\uap}[1]{\uparrow \rlap{$\scriptstyle{#1}$}}
\nc{\defeq}{\stackrel{\rm def}{=}}
\nc{\dis}[1]{\displaystyle{#1}}
\nc{\hcm}{\ \hat{,}\ }
\nc{\hcirc}{\hat{\circ}}
\nc{\hts}{\hat{\shpr}}
\nc{\lts}{\stackrel{\leftarrow}{\shpr}}
\nc{\rts}{\stackrel{\rightarrow}{\shpr}}
\nc{\lleft}{[}
\nc{\lright}{]}
\nc{\uni}[1]{\tilde{#1}}
\nc{\free}[1]{\bar{#1}}
\nc{\den}[1]{\check{#1}}
\nc{\lrpa}{\wr}
\nc{\curlyl}{\left \{ \begin{array}{c} {} \\ {} \end{array}
    \right . \!\!\!\!\!\!\!}
\nc{\curlyr}{ \!\!\!\!\!\!\!
    \left . \begin{array}{c} {} \\ {} \end{array}
    \right \} }
\nc{\longmid}{\left | \begin{array}{c} {} \\ {} \end{array}
    \right . \!\!\!\!\!\!\!}
\nc{\ot}{\otimes}
\nc{\bigot}{\bigotimes}
\nc{\mdiv}{\mrm{div}}
\nc{\sg}{S}
\nc{\ig}{I}
\nc{\pg}{P}
\nc{\jg}{J}
\nc{\eg}{E}
\nc{\fg}{F}
\nc{\cg}{C}
\nc{\mg}{M}
\nc{\abg}{C} \nc{\bas}{B}
\nc{\Lyn}{\mrm{Lyn}}
\nc{\lyn}{\Lyn}
\nc{\sG}{{\cals}}
\nc{\iG}{{\cali}}
\nc{\jG}{{\calj}}
\nc{\eG}{{\cale}}
\nc{\pG}{{\calp}}
\nc{\fG}{{\calf}}
\nc{\cG}{{\calc}}
\nc{\mG}{{\calm}}
\nc{\ora}[1]{\stackrel{#1}{\rar}}
\nc{\ola}[1]{\stackrel{#1}{\la}}%${\Bbb Z}$
\nc{\pex}[1]{\{#1\}}
\nc{\scs}[1]{\scriptstyle{#1}}
\nc{\mrm}[1]{{\rm #1}}
\nc{\sym}[1]{{\widehat{#1}}}
\nc{\margin}[1]{\marginpar{\rm #1}}   %{\rm #1}}
\DeclareMathOperator*{\dirlim}{\displaystyle{\lim_{\longrightarrow}}}
\nc{\mvp}{\vspace{0.5cm}}
\nc{\svp}{\vspace{2cm}}
\nc{\vp}{\vspace{8cm}}
\nc{\proofbegin}{\noindent{\bf Proof: }}
\nc{\proofend}{\qed}
\font\cyr=wncyr10
\nc{\sha}{{\mbox{\cyr X}}}  %used to be \cyr
\newfont{\scyr}{wncyr10 scaled 550}
\nc{\ssha}{\mbox{\bf \scyr X}}
\newfont{\bcyr}{wncyr10 scaled 1000}
\nc{\ncsha}{{\mbox{\cyr X}^{\mathrm NC}}}
\nc{\ncshao}{{\mbox{\cyr X}^{\mathrm NC,\,0}}}
\nc{\shpr}{\diamond}    %Shuffle product
\DeclareMathOperator*{\bigshpr}{\Large{\diamond}}
\DeclareMathOperator*{\bigshprl}{\Large{\Diamond}_\lambda}
\DeclareMathOperator*{\bigsot}{\ot}
\nc{\shprl}{{{\shpr}_\lambda}}
\nc{\shpro}{\diamond^0}    %Shuffle product
\nc{\shpru}{\check{\diamond}}
\nc{\catpr}{\diamond_l}
\nc{\rcatpr}{\diamond_r}
\nc{\lapr}{\diamond_a}
\nc{\lepr}{\diamond_e}
\nc{\tcon}{^{\ot}}
\nc{\lengord}{_{\mrm{leng}}}
\nc{\lexord}{_{\mrm{lex}}}
\nc{\com}{^{\mrm{c}}}
\nc{\vep}{\varepsilon}
\nc{\labs}{\mid\!}
\nc{\lambdaw}{}
\nc{\rabs}{\!\mid}
\nc{\hsha}{\widehat{\sha}}
\nc{\lsha}{\stackrel{\leftarrow}{\sha}}
\nc{\rsha}{\stackrel{\rightarrow}{\sha}}
\nc{\lc}{[}
\nc{\rc}{]}
\nc{\rbset}{R}
\nc{\rbnum}{r}
\nc{\rbfun}{\mathbf{R}}
\nc{\pset}{P}
\nc{\pnum}{p}
\nc{\pfun}{\mathbf{P}}
\nc{\spset}{SP}
\nc{\spnum}{sp}
\nc{\spgen}{\mathbf{SP}}
\nc{\srbi}[1]{\{#1\}}
\nc{\ann}{\mrm{ann}}
\nc{\Aut}{\mrm{Aut}}
\nc{\can}{\mrm{can}}
\nc{\colim}{\mrm{colim}}
\nc{\Cont}{\mrm{Cont}}
\nc{\rchar}{\mrm{char}}
\nc{\cok}{\mrm{coker}}
\nc{\dtf}{{R-{\rm tf}}}
\nc{\dtor}{{R-{\rm tor}}}
\nc{\Div}{{\mrm Div}}
\nc{\End}{\mrm{End}}
\nc{\Ext}{\mrm{Ext}}
\nc{\Fil}{\mrm{Fil}}
\nc{\Frob}{\mrm{Frob}}
\nc{\Gal}{\mrm{Gal}}
\nc{\GL}{\mrm{GL}}
\nc{\lord}{\mrm{L-order}\xspace}
\nc{\rme}{\mrm{E}}
\nc{\rml}{\mrm{L}}
\nc{\rmt}{\mrm{T}}
\nc{\Sym}{\mrm{Sym}}
\nc{\Hom}{\mrm{Hom}}
\nc{\hsr}{\mrm{H}}
\nc{\hpol}{\mrm{HP}}
\nc{\id}{\mrm{id}}
\nc{\im}{\mrm{im}}
\nc{\incl}{\mrm{incl}}
\nc{\length}{\mrm{length}}
\nc{\leng}{\mrm{\ell}}
\nc{\LR}{\mrm{LR}}
\nc{\mchar}{\rm char}
\nc{\NC}{\mrm{NC}}
\nc{\mpart}{\mrm{part}}
\nc{\os}{\mrm{OS}}
\nc{\qs}{\mrm{QS}}
\nc{\ql}{{\QQ_\ell}}
\nc{\qp}{{\QQ_p}}
\nc{\rank}{\mrm{rank}}
\nc{\rcot}{\mrm{cot}}
\nc{\rdef}{\mrm{def}}
\nc{\rdiv}{{\rm div}}
\nc{\rtf}{{\rm tf}}
\nc{\rtor}{{\rm tor}}
\nc{\res}{\mrm{res}}
\nc{\sh}{\mrm{MS}}
\nc{\Spec}{\mrm{Spec}}
\nc{\tor}{\mrm{tor}}
\nc{\Tr}{\mrm{Tr}}
\nc{\tr}{\mrm{tr}}
\nc{\TL}{\mrm{TL}}
\nc{\TEL}{\mathrm{TEL}}
\nc{\ETL}{\mathrm{ETL}}
\nc{\EL}{\mathrm{EL}}
\nc{\RETL}{\mathrm{RETL}}
\nc{\EETL}{\widetilde{\TL}}
\nc{\TLs}{\sym{\mrm{TL}}}
\nc{\TELs}{\sym{\mathrm{TEL}}}
\nc{\ETLs}{\sym{\mathrm{ETL}}}
\nc{\ELs}{\sym{\mathrm{EL}}}
\nc{\RETLs}{\sym{\mathrm{RETL}}}
\nc{\EETLs}{\sym{\widetilde{\TL}}}
\nc{\word}{\rm word\xspace}
\nc{\words}{\rm words\xspace}
\nc{\varab}{\phi_{\alpha,\beta}}
\nc{\ab}{\mathbf{Ab}}
\nc{\Alg}{\mathbf{Alg}}
\nc{\Algo}{\mathbf{Alg}^0}
\nc{\Bax}{\mathbf{Bax}}
\nc{\Baxo}{\mathbf{Bax}^0}
\nc{\RBo}{\mathbf{RB}^0}
\nc{\BRB}{\mathbf{RB}}
\nc{\Dend}{\mathbf{DD}}
\nc{\bfk}{{\bf k}}
\nc{\bfone}{{\bf 1}}
\nc{\base}[1]{{a_{#1}}}
\nc{\detail}{\marginpar{\bf More detail}
    \noindent{\bf Need more detail!}
    \svp}
\nc{\Diff}{\mathbf{Diff}}
\nc{\gap}{\marginpar{\bf Incomplete}\noindent{\bf Incomplete!!}
    \svp}
\nc{\FMod}{\mathbf{FMod}}
\nc{\RB}{\mathbf{RB}}
\nc{\Int}{\mathbf{Int}}
\nc{\Mon}{\mathbf{Mon}}
\nc{\remarks}{\noindent{\bf Remarks: }}
\nc{\Rep}{\mathbf{Rep}}
\nc{\Rings}{\mathbf{Rings}}
\nc{\Sets}{\mathbf{Sets}}
\nc{\DT}{\mathbf{DT}}
\nc{\BA}{{\Bbb A}}
\nc{\CC}{{\Bbb C}}
\nc{\DD}{{\Bbb D}}
\nc{\EE}{{\Bbb E}}
\nc{\FF}{{\Bbb F}}
\nc{\GG}{{\Bbb G}}
\nc{\HH}{{\Bbb H}}
\nc{\LL}{{\Bbb L}}
\nc{\NN}{{\Bbb N}}
\nc{\QQ}{{\Bbb Q}}
\nc{\RR}{{\Bbb R}}
\nc{\TT}{{\Bbb T}}
\nc{\VV}{{\Bbb V}}
\nc{\ZZ}{{\Bbb Z}}
\nc{\cala}{{\mathcal A}}
\nc{\calc}{{\mathcal C}}
\nc{\cald}{{\mathcal D}}
\nc{\cale}{{\mathcal E}}
\nc{\calf}{{\mathcal F}}
\nc{\calg}{{\mathcal G}}
\nc{\calh}{{\mathcal H}}
\nc{\cali}{{\mathcal I}}
\nc{\calj}{{\mathcal J}}
\nc{\call}{{\mathcal L}}
\nc{\calm}{{\mathcal M}}
\nc{\caln}{{\mathcal N}}
\nc{\calo}{{\mathcal O}}
\nc{\calp}{{\mathcal P}}
\nc{\calr}{{\mathcal R}}
\nc{\calt}{{\mathcal T}}
\nc{\calw}{{\mathcal W}}
\nc{\calx}{{\mathcal X}}
\nc{\CA}{\mathcal{A}}
\nc{\fraka}{{\mathfrak a}}
\nc{\frakB}{{\mathfrak B}}
\nc{\frakb}{{\mathfrak b}}
\nc{\frakd}{{\mathfrak d}}
\nc{\frakF}{{\mathfrak F}}
\nc{\frakf}{{\mathfrak f}}
\nc{\frakg}{{\mathfrak g}}
\nc{\frakL}{{\mathfrak L}}
\nc{\frakm}{{\mathfrak m}}
\nc{\frakM}{{\mathfrak M}}
\nc{\frakMo}{{\mathfrak M}^0}
\nc{\frakp}{{\mathfrak p}}
\nc{\frakw}{{\mathfrak w}}
\nc{\frakx}{{\mathfrak x}}
\nc{\ox}{\overline{\frakx}}
\nc{\frakX}{{\mathfrak X}}
\nc{\fraky}{{\mathfrak y}}
\nc{\li}[1]{\textcolor{blue}{Li: #1}}
\nc{\wys}[1]{\textcolor{green}{William: #1}}
\nc{\byhs}[1]{\textcolor{red}{Bingyong: #1}}
\nc{\rrb}[1]{[#1]}
\nc{\rrrb}[1]{\{#1\}}
\nc{\ideal}[1]{\langle #1\rangle}
\nc{\refl}[1]{\overline{#1}}
\nc{\rrB}{{reflexive Rota-Baxter}\xspace}
\nc{\rrrB}{{radical reflexive Rota-Baxter}\xspace}
\nc{\srb}{{strict Rota-Baxter}\xspace}
\nc{\Srb}{{Strict Rota-Baxter}\xspace}
\renewcommand\geq{\geqslant}
\renewcommand\leq{\leqslant}
\nc\rbop{{\lc\,\,\rc}}
\nc\rbopi[1]{{\lc_{#1} \, \rc_{#1}}}
\nc{\redtext}[1]{\textcolor{red}{#1}}
\begin{document}

\title[Structures of mixable shuffle algebras and Rota-Baxter algebras]
{Structure theorems of mixable shuffle algebras and free commutative Rota-Baxter algebras}
\author{Li Guo}
\address{Department of Mathematics and Computer Science,
         Rutgers University,
         Newark, NJ 07102}
\email{liguo@newark.rutgers.edu}
\author{Bingyong Xie}
\address{Department of Mathematics, Peking University, Beijing, China}
\email{byhsie@math.pku.edu.cn}

%\date{\today}

\begin{abstract}
We study the ring theoretical structures of mixable shuffle algebras and their associated free commutative Rota-Baxter algebras. For this study we utilize the connection of the mixable shuffle algebras with the overlapping shuffle algebra of Hazewinkel, quasi-shuffle algebras of Hoffman and quasi-symmetric functions. This connection allows us to apply methods and results on shuffle products and Lyndon words on ordered sets.
As a result, we obtain structure theorems for a large class of mixable shuffle algebras and free commutative Rota-Baxter algebras with various coefficient rings.
\end{abstract}

\maketitle

%\tableofcontents

\setcounter{section}{0}

\section{Introduction}
In this paper, all rings and algebras are assumed to be unitary unless otherwise specified. Let $\bfk$ denote a commutative ring. By an algebra we mean a $\bfk$-algebra and by a tensor product we mean the tensor product over $\bfk$.

\subsection{Rota-Baxter algebras and mixable shuffle algebras}
Given a commutative ring $\bfk$ and a $\lambda\in \bfk$, a {\bf Rota-Baxter algebra of weight $\lambda$} is an associative $\bfk$-algebra $R$ together with a $\bfk$-linear operator $P$ on $R$ such that
\begin{equation}
P(x)P(y)=P(xP(y))+P(P(x)y)+ \lambda P(xy), \forall x,y\in R.
\mlabel{eq:RB}
\end{equation}
Such an operator is called a {\bf Rota-Baxter operator} (of weight $\lambda$). This operator is an abstraction of the integration operator $P(f)(x):=\int_0^x f(t)\,dt$ where the above identity is simply the integration by parts formula. This operator also include as special cases numerous other operators in mathematics and physics, such as the summation operator of functions, partial sum operator for sequences and projection operator on Laurent series, as well as the operator on distributions in the paper~\mcite{Ba} where G. Baxter first defined this operator. Such broad connections lead to many applications of Rota-Baxter algebras~\mcite{Ag,AGKO,Bai,C-K1,E-G,E-G-K3,EGM,Guum,Gust,G-Z,Ro2} which further motivate the theoretical study of Rota-Baxter algebras.
See the introductory and survey articles~\mcite{EGsu,Gda,Gsu,Ro2} for further details.

As a first step in their theoretical study, free commutative Rota-Baxter algebras were constructed by Cartier and Rota~\mcite{Ca,Ro1} with certain restrictions. A general construction was obtained by one of the authors and Keigher~\mcite{G-K1,G-K2} in terms of mixable shuffle products.
For a commutative $\bfk$-algebra $A$, let $\sha_{\bfk,\lambda}(A)$ be the free commutative Rota-Baxter algebra of weight $\lambda$ generated by $A$. It is shown in~\mcite{G-K1} that
\begin{equation}
\sha_{\bfk,\lambda}(A) = A\ot \sh_{\bfk,\lambda}(A)
\mlabel{eq:decomp}
\end{equation}
where $\sh_{\bfk,\lambda}(A)$ (denoted by $\sha^+_{\bfk,\lambda}(A)$ in~\mcite{G-K1,G-K2}) is {\bf the mixable shuffle algebra of weight $\lambda$} generated by $A$. The precise definitions will be recalled in Section~\mref{ss:msh}. Thus the study of free commutative Rota-Baxter algebras is reduced to the study of mixable shuffle algebras.

\subsection{Overlapping shuffle algebra and quasi-symmetric functions}
During the same period of time when mixable shuffle product was constructed, Hazewinkel~\mcite{Ha0,Ha} defined the overlapping shuffle algebra and showed that it gives another description of the algebra of quasi-symmetric functions. He then used the language and methods on Lyndon words of shuffles algebras to extend the well-known theorem of Radford~\mcite{Ra} that the shuffle algebra with rational coefficients is a polynomial algebra generated by the set of Lyndon words to the algebra of quasi-symmetric functions with rational coefficients.
More generally, Hoffman~\mcite{Ho} showed that his quasi-shuffle algebras, also introduced during the same period of time, are polynomial algebras on Lyndon words when rational coefficients are considered.

The theory of these algebras with integer coefficients developed more slowly. As commented in~\mcite{Ha,Ha2}, Ditters announced in his 1972  paper~\mcite{Di} that the algebra of quasi-symmetric functions with integer coefficients is a polynomial algebra. But there was a gap in his proof, as well as in the quite a few subsequent efforts to prove the statement. Eventually, Hazewinkel was able to provide a correct proof (Theorem~\mref{thm:qlyn}.(\mref{it:dh})). So we will call this statement the {\bf Ditters Conjecture} or the {\bf Ditters-Hazewinkel Theorem}.

\subsection{Mixable shuffles and overlapping shuffles}
As we will see later in Section~\mref{ss:moq}, the overlapping shuffle algebra, generalized overlapping shuffle algebras and quasi-shuffle algebras are all special cases of mixable shuffle algebras.
In this paper we extend the results and methods for these special cases, especially from~\mcite{Ha}, to study more general mixable shuffle algebras with various coefficient rings. We then study the ring theoretical structure of free commutative Rota-Baxter algebras through the tensor decomposition in Eq.~(\mref{eq:decomp}).
This paper can be regarded as a continuation of our earlier studies~\mcite{EGsh,Gudom,G-K1,G-K2} on this subject.

In analogy to the cases of the overlapping shuffle algebra and quasi-symmetric functions, the structure of a mixable shuffle algebra depends on its base ring $\bfk$, as well as its weight $\lambda$, especially for those mixable shuffle algebras that appear in the construction of free commutative Rota-Baxter algebras. So we will consider mixable shuffle algebras and Rota-Baxter algebras in these separate cases. For notational simplicity, we will take the base ring $\bfk$ to be $\QQ$, $\FF_p$, $\ZZ_p$ or $\ZZ$. See Table~\mref{tb:sum} for a summary of previous and new results.

%\medskip

%\subsection{Outline of the paper}
%\mlabel{ss:outline}

When $\bfk=\QQ$, Radford's theorem and its generalizations by Hazewinkel~\mcite{Ha} and Hoffman~\mcite{Ho} can be quite easily generalized further to mixable shuffle algebras (Theorem~\mref{thm:msq}) and then to free commutative Rota-Baxter algebras (Theorem~\mref{thm:rbl}). This is presented in Section~\mref{sec:q} after preliminary notations and results.

The situation is already quite different in the case of $\bfk=\FF_p$ which is considered in Section~\mref{sec:chp}. By a careful study of the Lyndon words, we obtain the structure theorem (Theorem~\mref{thm:pmsh}) for a quite large class of mixable shuffle algebras. This leads to the structure theorem of a quite large class of free commutative Rota-Baxter algebras (Theorem~\mref{thm:rbafp}), including those generated by a finite set.

In Section~\mref{sec:zp}, we lift the results in Section~\mref{sec:chp} from $\FF_p$ to $\ZZ_p$ by studying the reduction map $\ZZ_p\to \FF_p$. As is often the case in this lifting process, we can only recover part of the information and obtain a less precise structure theorem on the mixable shuffle algebras with $\ZZ_p$-coefficients (Theorem~\mref{thm:isomor}), which translates to a less precise structure theorem on the free commutative Rota-Baxter algebras with $\ZZ_p$-coefficients (Theorem~\mref{thm:rbazp}). Nevertheless, in the case that we are most interested in and includes the overlapping shuffle algebra, we show that the mixable shuffle algebra is a polynomial algebra generated by an explicitly defined set.

In the final Section~\mref{sec:int}, we give a local-global principle extracted from Hazewinkel's elegant proof of the Ditters-Hazewinkel Theorem~\mcite{Ha} mentioned above.
This principle allows us to ``glue"  together our local results over $\QQ$ and $\ZZ_p$, for all $p$, to obtain results over $\ZZ$.
As a result, we generalize the Ditters-Hazewinkel Theorem from the mixable shuffle algebra on free abelian semigroup with one generator to those with countably many generators (Theorem~\mref{thm:intfr2}).
We obtain a similar polynomial algebra in free commutative Rota-Baxter algebra generated by a set (Theorem~\mref{thm:rbaz}).

\medskip

\noindent
{\bf Acknowledgements: } Both authors thank the Max Planck Institute for Mathematics at Bonn where this research was carried out. The first author acknowledges support from NSF grant DMS-0505643.

\section{Structure theorems on $\QQ$}
\mlabel{sec:q}
In this section we first review the construction of free commutative Rota-Baxter algebras in terms of mixable shuffle algebras obtained in~\mcite{G-K1,G-K2}. We then relate mixable shuffle algebras to the overlapping shuffle algebra and generalized overlapping shuffle algebras of Hazewinkel~\mcite{Ha,Ha2}, and quasi-shuffle algebras of Hoffman~\mcite{Ho}. This connection allows us to extend the study of overlapping shuffle algebra and quasi-shuffle algebras to the study the structure of mixable shuffle algebras and free commutative Rota-Baxter algebras with base ring $\QQ$. This connection will also be used in later sections for other base rings.

\subsection{Mixable shuffle algebras and free commutative Rota-Baxter algebras}
\mlabel{ss:msh}
We briefly recall the construction of mixable shuffle algebras and free commutative Rota-Baxter algebras~\mcite{G-K1,G-K2}.

Let $A$ be a commutative $\bfk$-algebra {\em that is not necessarily unitary}. For a given $\lambda\in \bfk$, the {\bf mixable shuffle algebra of weight $\lambda$ generated by $A$} (with coefficients in $\bfk$) is the $\bfk$-module
\begin{equation}
\sh(A):= \sh_{\bfk,\lambda} (A)= \bigoplus_{k\ge 0}
    A^{\otimes k}
= \bfk \oplus A\oplus A^{\otimes 2}\oplus \cdots
\mlabel{eq:mshde}
\end{equation}
equipped with the {\bf
mixable shuffle product $\shprl$ of weight $\lambda$} defined as
follows.

For pure tensors
$\fraka=a_1\ot \ldots \ot a_m\in A^{\ot m}$ and $\frakb=b_1\ot
\ldots \ot b_n\in A^{\ot n}$, a {\bf shuffle} of $\fraka$ and
$\frakb$ is a tensor list of $a_i$ and $b_j$ without change the
natural orders of the $a_i$s and the $b_j$s. More generally, for the
fixed $\lambda\in \bfk$, a {\bf mixable shuffle} (of weight
$\lambda$) of $\fraka$ and $\frakb$ is a shuffle of $\fraka$ and
$\frakb$ in which some (or {\it none}) of the pairs $a_i\ot b_j$ are
merged into $\lambda\, a_i b_j$. Then define
\begin{equation}
\fraka\shpr \frakb=\fraka \shprl \frakb= \sum {\rm\ mixable\ shuffles\ of\ }\fraka {\rm\ and\ } \frakb
\mlabel{eq:msh}
\end{equation}
where the subscript $\lambda$ is often suppressed when there is no danger of confusion.
For example,
\begin{eqnarray*}
a_1 \shpr (b_1\ot b_2):&=& a_1 \shprl (b_1\ot b_2)\\
&=& \underbrace{a_1\ot b_1\ot b_2 + b_1\ot a_1\ot b_2
+ b_1\ot b_2\ot a_1}_{\rm shuffles}
+ \underbrace{\lambda (a_1 b_1)\ot b_2 + \lambda b_1\ot (a_1 b_2)}_{\rm merged\ shuffles}.
\end{eqnarray*}
With $\bfone\in \bfk$ as the unit, this product makes $\sh_{\bfk,\lambda}(A)$ into a commutative $\bfk$-algebra.
See~\mcite{G-K1} for further details of the mixable shuffle product.
When $\lambda=0$, we simply have the shuffle product which is also defined when $A$ is only a $\bfk$-module, treated as an algebra with zero multiplication.

The product $\shprl$ can also be defined by the following recursion~\mcite{EGsh,G-Z2} which gives the connection with quasi-shuffle algebras of Hoffman~\mcite{Ho}.
First define the multiplication by $A^{\ot 0}=\bfk$ to be the scalar product. In particular, $\bfone$ is the identity.
For any $m,n\geq 1$ and
$\fraka:=a_1\ot\cdots \ot a_m\in A^{\ot m}$, $\frakb:=b_1\ot \cdots\ot b_n\in A^{\ot n}$, define
$a \shprl b$ by induction on the sum $m+n$. Then $m+n\geq 2$. When $m+n=2$, we have
$a=a_1$ and $b=b_1$. Define
\begin{equation} a\shprl b = a_1\ot b_1 + b_1\ot a_1 + \lambda a_1b_1.
\label{eq:quasi0}
\end{equation}
Assume that $\fraka\shprl \frakb$ has been defined for $m+n\geq k\geq 2$ and consider $\fraka$ and $\frakb$
with $m+n=k+1$. Then $m+n\geq 3$ and so at least one of $m$ and $n$ is greater than 1.
Then we define
\begin{equation}
  \fraka \shprl \frakb =\left\{ \begin{array}{l}
  a_1\ot  b_1\ot  \cdots \ot b_n  + b_1\ot \big(a_1\shprl (b_2\ot \cdots\ot b_n)\big) \\
    \qquad \qquad + \lambda(a_1b_1)\ot  b_2\ot \cdots\ot  b_n, {\rm\ when\ } m=1, n\geq 2, \\
    a_1 \ot\big ((a_2\ot \cdots\ot  a_m)\shprl b_1 \big) + b_1\ot a_1\ot \cdots\ot a_m \\
 \qquad \qquad + \lambda(a_1b_1) \ot  a_2\ot \cdots\ot  a_m,     {\rm\ when\ } m\geq 2, n=1, \\
     a_1\ot  \big ((a_2\ot \cdots\ot a_m)\shprl
(b_1\ot \cdots\ot  b_n)\big ) + b_1\ot  \big ((a_1\ot  \cdots \ot a_m)\shprl (b_2 \ot \cdots \ot  b_n)\big) \notag \\
 \qquad \qquad  + \lambda(a_1 b_1)  \big ( (a_2\ot \cdots\ot a_m) \shprl
     (b_2\ot \cdots\ot  b_n)\big ),
     {\rm\ when\ } m, n\geq 2.
     \end{array} \right .
\mlabel{eq:quasi}
\end{equation}
Here the products by $\shprl$ on the right hand side of the equation are well-defined
by the induction hypothesis.

\smallskip

Now let $A$ be a (unitary) $\bfk$-algebra. We define the tensor product algebra
\begin{equation} \sha(A):=\sha_{\bfk,\lambda}(A)= A\ot \sh_{\bfk,\lambda}(A) =A \oplus A^{\ot 2} \oplus \cdots.
\mlabel{eq:freerb}
\end{equation}
Define a $\bfk$-linear operator $P_A$ on
$\sha (A)$ by assigning
\[ P_A( x_0\otimes x_1\otimes \cdots \otimes x_n)
=\bfone_A\otimes x_0\otimes x_1\otimes \cdots\otimes x_n, \]
for all
$x_0\otimes x_1\otimes \cdots\otimes x_n\in A^{\otimes (n+1)}$
and extending by additivity.
Let $j_A:A\rar \sha (A)$ be the canonical inclusion map.

\begin{theorem} {\bf \mcite{G-K1}}
\begin{enumerate}
\item
The pair $(\sha (A),P_A)$, together with the natural embedding
$j_A:A\rightarrow \sha (A)$, is a free commutative Rota-Baxter
$\bfk$-algebra of weight $\lambda$ on $A$. In other words, for any
Rota-Baxter $\bfk$-algebra $(R,P)$ and any $\bfk$-algebra
homomorphism $\varphi:A\rar R$, there exists a unique Rota-Baxter
$\bfk$-algebra homomorphism $\tilde{\varphi}:(\sha (A),P_A)\rar
(R,P)$ such that $\varphi = \tilde{\varphi} \circ j_A$ as
$\bfk$-algebra homomorphisms. \mlabel{it:freea}
\item
When $X$ is a set. The pair $(\sha(\bfk[X]), P_{\bfk[X]})$, together with the natural embedding $j_X: X\rightarrow \bfk[X] \rightarrow \sha(\bfk[X])$, is a free commutative Rota-Baxter $\bfk$-algebra on the set $X$ of weight $\lambda$.
\mlabel{it:freex}
\end{enumerate}
\mlabel{thm:shua}
\end{theorem}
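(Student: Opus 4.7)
The plan is to establish (a) in three moves: verify that $P_A$ is a Rota-Baxter operator, construct an extension $\tilde{\varphi}$ by an explicit iterated formula, and show it is the unique Rota-Baxter homomorphism extending $\varphi$; part (b) will then follow formally. For the Rota-Baxter axiom, take pure tensors $\fraka = a\ot u$ and $\frakb = b\ot v$ in $A\ot \sh(A) = \sha(A)$. A direct computation in the tensor product algebra structure shows that both sides of (\mref{eq:RB}) lie in $\bfone_A \ot \sh(A)$: the left side equals $\bfone_A\ot\big((a\ot u)\shprl(b\ot v)\big)$, while the three terms on the right sum to $\bfone_A$ tensored with $a\ot(u\shprl(b\ot v)) + b\ot((a\ot u)\shprl v) + \lambda (ab)\ot(u\shprl v)$. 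Equality therefore follows at once from the recursive definition (\mref{eq:quasi}) of $\shprl$, with the one-letter base case handled by (\mref{eq:quasi0}).

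For any commutative Rota-Baxter algebra $(R,P)$ and $\bfk$-algebra map $\varphi:A\to R$, the plan is to set
\[
\tilde{\varphi}(a_0\ot a_1\ot\cdots\ot a_n) := \varphi(a_0)\,P\!\Big(\varphi(a_1)\,P\!\big(\cdots P(\varphi(a_n))\cdots\big)\Big)
\]
and extend $\bfk$-linearly. The identities $\tilde{\varphi}\circ j_A = \varphi$ and $\tilde{\varphi}\circ P_A = P\circ\tilde{\varphi}$ are immediate from the formula. Uniqueness is forced as well: every pure tensor in $\sha(A)$ factors as $a_0\cdot P_A\big(a_1\cdot P_A(\cdots P_A(a_n)\cdots)\big)$ inside $\sha(A)$, so any Rota-Baxter homomorphism extending $\varphi$ must agree with $\tilde{\varphi}$ on pure tensors.

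The main obstacle is the multiplicativity of $\tilde{\varphi}$. Writing $\fraka = a\ot u$ and $\frakb = b\ot v$ and unwinding the tensor product algebra structure on $A\ot \sh(A)$, multiplicativity reduces to the identity
\[
P\big(\tilde{\varphi}(u)\big)\,P\big(\tilde{\varphi}(v)\big) = P\big(\tilde{\varphi}(u\shprl v)\big)
\]
for $u,v\in \sh(A)$ of positive tensor degree. I would prove this by induction on the combined tensor degree: the Rota-Baxter identity in $R$ expands the left side as a sum of three $P$-terms, and after peeling off the leading letter of $u$ and of $v$ (using that $R$ is commutative) the inner product inside each $P$ has strictly smaller combined degree. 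The inductive hypothesis then identifies these three inner products with $\tilde{\varphi}$ applied to the three summands appearing on the right of the recursion (\mref{eq:quasi}) for $u\shprl v$, and summing reassembles $P(\tilde{\varphi}(u\shprl v))$. Finally, (b) follows from (a) together with the universal property of $\bfk[X]$ as the free commutative $\bfk$-algebra on $X$: any set map $X\to R$ extends uniquely to a $\bfk$-algebra map $\bfk[X]\to R$ and then uniquely to a Rota-Baxter homomorphism $\sha(\bfk[X])\to R$ by (a).
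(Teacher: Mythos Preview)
This theorem is not proved in the present paper; it is quoted from \cite{G-K1} as background for the construction of $\sha(A)$. Your sketch is correct and is essentially the original argument in \cite{G-K1}: the Rota-Baxter axiom for $P_A$ unwinds precisely to the recursion (\ref{eq:quasi0})--(\ref{eq:quasi}) defining $\shprl$, and the universal property is established via the explicit iterated formula for $\tilde{\varphi}$ together with an induction on total tensor length for multiplicativity, with part (b) deduced from (a) by the freeness of $\bfk[X]$.
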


\subsection{Mixable shuffles, overlapping shuffles and quasi-shuffles}
\mlabel{ss:moq}
Let $S$ be a semigroup and let $\bfk\,S=\sum_{s\in S} \bfk\,s$ be the semigroup nonunitary $\bfk$-algebra. Then a canonical $\bfk$-basis of $(\bfk\,S)^{\ot k}, k\geq 0$, is the set $S^{\ot k}:=\{s_1\ot \cdots \ot s_k\ |\  s_i\in S, 1\leq i\leq k\}$. Thus a canonical $\bfk$-basis of $\sh_{\bfk,\lambda}(A)$ is
\begin{equation}
M\tcon(S):=\{1\}\cup \{ u_1\ot \cdots \ot u_r\ |\ u_i\in S, 1\leq i\leq r, r\geq 1\}.
\mlabel{eq:tmon}
\end{equation}
With the tensor concatenation, $M\tcon(S)$ is simply the free monoid generated by $S$.
We use the tensor concatenation instead of the usual concatenation for the product since we need to use the concatenation to denote the product in $S$ when $S$ is a semigroup.
Elements in $M\tcon(S)$ are still called {\bf words} from the set $S$. Then we have
$$ \sh_{\bfk,\lambda}(S):=\sh_{\bfk,\lambda}(\bfk S)=\bfk\, M\tcon(S).$$
We denote $\sh_{\bfk,\lambda}(S)$ for $\sh_{\bfk,\lambda}(\bfk\, S)$ to make clear the connection with $S$ and to simplify the notation.

Let $S$ be a monoid and let $\bfk\,S$ be the (unitary) $\bfk$-algebra. As in Eq.~(\mref{eq:decomp}) we have the free commutative Rota-Baxter algebra
\begin{equation}
 \sha_{\bfk,\lambda} (\bfk S) =(\bfk S)\ot \sh_{\bfk,\lambda}(S).
 \mlabel{eq:factor2}
 \end{equation}
It is in fact the free commutative Rota-Baxter algebra generated by the monoid $S$ in the sense that it comes from the left adjoint functor of the forgetful functor from the category of commutative Rota-Baxter algebras to the category of commutative multiplicative monoids.

Now let $S$ be the multiplicative semigroup $\{x^i\}_{i\geq 1}$.
Then
$$ M\tcon(S)= \{ x^{a_1}\ot \cdots \ot x^{a_k}\ |\ a_j\geq 1, 1\leq j\leq k, k\geq 0\}.$$
It is in bijection with the set of vectors
$$ \{ [a_1,\cdots, a_k]\ |\ a_j\geq 1, 1\leq j\leq k, k\geq 0\}$$
and with the set of polynomials
$$ \left \{ \sum_{1\leq i_1<\cdots <i_n} X_{i_1}^{a_1}\cdots X_{i_k}^{a_k}\ |\
    a_j\geq 1, 1\leq j \leq k, k\geq 0 \right\} \subseteq \bfk[X_i, i\geq 1].
$$
Through the first bijection, we obtain the isomorphism of $\sh_{\bfk,1}(S)$ with the {\bf overlapping shuffle algebra}
$$ \bfk \{ [a_1,\cdots, a_k]\ |\ a_j\geq 1, 1\leq j\leq k, k\geq 0\}$$
defined by Hazewinkel~\mcite{Ha0}. See~\mcite{Ha0} for more details and a more precise definition of the product in terms of order preserving injective maps (see also~\mcite{Ca} and~\mcite{Eh}).
Through the second bijection, we obtain the isomorphism of $\sh_{\bfk,1}(S)$ with the algebra $QSym_\bfk (S)$ of quasi-symmetric functions~\mcite{Ge}.

Let $S$ be a graded semigroup $S=\coprod_{i\geq 0} S_i$, $S_iS_j\subseteq S_{i+j}$ such that $|S_i|<\infty$, $i\geq 0$.
Then with $\lambda =1$, the mixable shuffle algebra $\sh_{\lambda}(S)$ is isomorphic to the {\bf quasi-shuffle algebra} defined by Hoffman~\mcite{Ho,EGsh,G-Z2}.

For a general semigroup $S$, the mixable shuffle algebra $\sh_{\bfk,1}(S)$ of weight $1$ coincides with the {\bf generalized overlapping shuffle algebra} on $S$~\mcite{Ha2}.

Let $(S,<)$ be an ordered set. Extend the order on $S$ to the
{\bf lexicographic order} $<\lexord$ on $M\tcon(S)$. Thus, for $u,v\in M\tcon(S)$, $u<\lexord v$ if and
only if either $v=u\ot x$ for some non-empty word $x$, or $u=x\ot a\ot u',
v=x\ot b\ot v'$ for some words $x,u',v'$ and some letter $a,b$ with $a<b$.
Recall that a {\bf Lyndon word} in $M\tcon(S)$ is a non-empty word $w$ such that
if $w=u\ot v$ with $u,v\neq 1$, then $w<\lexord v$. Let $\Lyn=\Lyn(S)$ be
the set of Lyndon words in $M\tcon(S)$.

The following theorem summarizes what is known about when a mixable shuffle algebra is a polynomial algebra.
\begin{theorem}
\begin{enumerate}
\item {\bf (\cite{Ra}\cite[Theorem 6.1]{Re})}
Let $S$ be an ordered set. Then $\sh_{\QQ,0}(S)$, namely the shuffle algebra $Sh(S)$ on $S$ with coefficients in $\QQ$, is isomorphic to $\QQ[\Lyn(S)]$.
\mlabel{it:rad}
\item
{\bf (Hazewinkel-Hoffman Theorem~\cite{Ha},\cite[Theorem 2.6.]{Ho})} Let $S$ be an ordered abelian semigroup. Then $\sh_{\QQ,1}(S)$, namely the quasi-shuffle algebra on $S$ with coefficients in $\QQ$, is isomorphic to $\QQ[\Lyn(S)]$.
\mlabel{it:hh}
\item
{\bf (Ditters-Hazewinkel Theorem~\mcite{Di,Ha})}
Let $S$ be the free abelian semigroup with one generator. Then $\sh_{\ZZ,1}(S)$, namely the $\ZZ$-algebra of overlapping shuffles, and the algebra quasi-symmetric functions with integer coefficients, is a polynomial algebra.
\mlabel{it:dh}
\end{enumerate}
\mlabel{thm:qlyn}
\end{theorem}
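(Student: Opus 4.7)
The plan is to treat the three parts of Theorem~\mref{thm:qlyn} separately, since each has a distinct proof strategy and the second builds on the first while the third requires entirely new tools.

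For part~(\mref{it:rad}), I would follow Radford's approach via the Chen--Fox--Lyndon factorization: every word $w \in M\tcon(S)$ has a unique decomposition $w = \ell_1 \ot \ell_2 \ot \cdots \ot \ell_k$ with each $\ell_i \in \Lyn(S)$ and $\ell_1 \geq\lexord \ell_2 \geq\lexord \cdots \geq\lexord \ell_k$. I would then compute the shuffle product $\ell_1 \shpr \ell_2 \shpr \cdots \shpr \ell_k$ and verify that it equals $m(w)\cdot w$ plus a $\ZZ$-linear combination of words of the same length that are strictly greater than $w$ in lexicographic order, where $m(w)$ is the product of factorials of the multiplicities of the distinct Lyndon factors occurring in $w$. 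Inverting these factorials over $\QQ$ converts this unitriangular relation (with respect to $<\lexord$) into a change-of-basis between the concatenation basis $M\tcon(S)$ and the shuffle-monomials in $\Lyn(S)$, so $\{\ell \mid \ell \in \Lyn(S)\}$ is algebraically independent and generates, giving $\sh_{\QQ,0}(S) \cong \QQ[\Lyn(S)]$.

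For part~(\mref{it:hh}), I would invoke Hoffman's exponential isomorphism. When $S$ is an abelian semigroup, Hoffman constructs a $\QQ$-algebra isomorphism $\exp\colon (\sh_{\QQ,0}(S),\shpr) \xrightarrow{\sim} (\sh_{\QQ,1}(S),\shprl)$ defined by a formal series over compositions with factorial denominators, whose inverse $\log$ intertwines the quasi-shuffle product with the shuffle product. The crucial observation is that $\exp$ acts unitriangularly on the basis $M\tcon(S)$ (sending $w$ to $w$ plus a $\QQ$-combination of words obtained by merging adjacent letters, all of length strictly less than $|w|$), so the image of the polynomial generating set from part~(\mref{it:rad}) remains a polynomial generating set; hence $\sh_{\QQ,1}(S) \cong \QQ[\Lyn(S)]$.

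Part~(\mref{it:dh}) is the Ditters--Hazewinkel theorem, and is by far the most delicate, precisely because over $\ZZ$ both the factorial inversion used in part~(\mref{it:rad}) and the exponential map used in part~(\mref{it:hh}) become unavailable. For $S$ the free abelian semigroup on one generator, $M\tcon(S)$ is naturally indexed by compositions of positive integers. I would follow Hazewinkel's strategy: first, single out an explicit candidate family of ``elementary'' Lyndon-type words whose shuffle products are integer-unitriangular against the monomial basis indexed by multisets of Lyndon compositions; second, establish the required triangularity locally over each $\FF_p$ by a careful analysis of Lyndon compositions of $p$-power weight, using a Frobenius-style comparison between the $p$-th shuffle power of a Lyndon word and its ``Frobenius lift''; and third, glue the rational statement from part~(\mref{it:hh}) together with the $\FF_p$-statements for all $p$ via an arithmetic local-global principle to produce a polynomial basis over $\ZZ$. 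The hard part will be integrality: every natural combinatorial identity wants to introduce denominators, and the proof must replace each such identity by a prime-by-prime statement and then reassemble. This is exactly the obstacle which motivates, and is formalized in, the prime-by-prime treatment of Sections~\mref{sec:chp}--\mref{sec:int} below.
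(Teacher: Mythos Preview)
The paper does not prove Theorem~\mref{thm:qlyn}; it is stated without proof as a compilation of known results from the cited literature, and serves only as the baseline for the paper's own generalizations (Theorem~\mref{thm:msq} and Sections~\mref{sec:chp}--\mref{sec:int}). Your sketches are accurate summaries of the arguments in those references: CFL-factorization triangularity for Radford, Hoffman's $\exp/\log$ isomorphism for the quasi-shuffle case, and Hazewinkel's prime-by-prime local--global scheme for the integral case---indeed the last of these is precisely the template the present paper abstracts in Theorem~\mref{thm:grfr}.

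One minor slip in your part~(\mref{it:rad}): with the convention used throughout this paper (CFL factorization with Lyndon factors in \emph{decreasing} order, as in Theorem~\mref{lem:known}(\mref{it:cfl})), the remainder terms in $\ell_1 \shpr \cdots \shpr \ell_k$ are lexicographically \emph{smaller} than $w$, not greater; compare the proof of Lemma~\mref{lem:lead}(\mref{it:basiclyndon2}). The triangularity argument is of course unaffected by the direction.
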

Thus quite much is known about the mixable shuffle algebras with coefficients in $\QQ$ and with weight $0$ or $1$, but little is known in the other cases. One of our main goals in this paper is to extend this theorem to the cases for other coefficient rings and other weights, as summarized in Table~\mref{tb:sum}.
\bigskip

\begin{table}
\centering
\caption{Structure of $\sh_{\bfk,\lambda}(S)$}
\vspace{0.05 in}
\begin{tabular}{||c|c|c|c|c||}
%\hline
%\multicolumn{5}{||c||}{\bf Structure of $\sh_{\bfk, \lambda}(S)$}
%\\
\hline
& base ring $\bfk$ & weight $\lambda$ & \begin{tabular}{c} ordered set or\\ semigroup  $S$\end{tabular} & reference \\
\hline \hline
Radford~\cite{Ra} & $\QQ$ & 0 & ordered set& Theorem~\mref{thm:qlyn}.(\mref{it:rad})\\
\hline
Hoffman~\cite{Ho} & $\QQ$ & 1 & \begin{tabular}{c} ordered abelian\\
     semigroups \end{tabular}
     & Theorem~\mref{thm:qlyn}.(\mref{it:hh})\\
\hline
Hazewinkel~\cite{Ha} & $\QQ, \ZZ_p,\ZZ$ & 1 & $\ZZ_{>0}$ & Theorem~\mref{thm:qlyn}.(\mref{it:hh}) \& (\mref{it:dh}) \\
\hline
\multirow{9}{*}{This paper} &
$\QQ$ & $\neq 0$ & \begin{tabular}{c} ordered abelian \\ semigroups \end{tabular} &
Theorem~\mref{thm:msq} \\ &&&& \\
 &    $\FF_p$ & 0 & ordered set & Theorem~\mref{thm:psh} \\
    &&&& \\
  &   $\FF_p$ & $\neq 0$ & $S\in \pG, \jG$ & Theorem~\mref{thm:pmsh} \\
    &&&&\\
   &  $\ZZ_p$ & $p$-unit & $S\in \fG, \jG$ & Theorem~\mref{thm:isomor} \\
    &&&&\\
    & $\ZZ$ & $\pm 1$ & $S\cong \ZZ_{>0}^{n}$ or $\ZZ_{>0}^{(\infty)}$ & Theorem~\mref{thm:intfr} \& \mref{thm:intfr2}
\\ \hline
\end{tabular}
\mlabel{tb:sum}
\end{table}
\bigskip

We first consider the easy case when $\bfk=\QQ$ and $\lambda\in \QQ$ is arbitrary.
\begin{theorem}
Let $S$ be an ordered abelian semigroup and let $\lambda$ be in $\QQ$. Then $\sh_{\QQ,\lambda}(S)$ is isomorphic to $\QQ[\Lyn(S)]$.
\mlabel{thm:msq}
\end{theorem}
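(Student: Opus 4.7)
The plan is to reduce the statement to the two cases already recorded in Theorem~\ref{thm:qlyn}: Radford's theorem for $\lambda=0$ and the Hazewinkel-Hoffman theorem for $\lambda=1$. The case $\lambda=0$ is immediate, since by the very definition (\ref{eq:msh}) the product $\diamond_0$ does not involve any multiplication in $A=\QQ S$, so $\sh_{\QQ,0}(S)$ coincides with the classical shuffle algebra on the ordered set $S$, and Theorem~\ref{thm:qlyn}.(\ref{it:rad}) applies verbatim.

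For $\lambda\neq 0$, the idea is to rescale by a power of $\lambda$ keyed to the length of a word. I would define the $\QQ$-linear map
\[
\phi:\sh_{\QQ,\lambda}(S)\longrightarrow \sh_{\QQ,1}(S),\qquad \phi(u_1\otimes\cdots\otimes u_r)=\lambda^{r}\,u_1\otimes\cdots\otimes u_r,
\]
on the canonical basis $M^{\otimes}(S)$, with $\phi(\mathbf{1})=\mathbf{1}$. Since $\lambda\in\QQ^\times$, the map $\psi(u_1\otimes\cdots\otimes u_r)=\lambda^{-r}u_1\otimes\cdots\otimes u_r$ provides a two-sided inverse, so $\phi$ is automatically a $\QQ$-module isomorphism; only the multiplicativity requires verification.

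The key computation is that $\phi(\fraka\diamond_\lambda\frakb)=\phi(\fraka)\diamond_1\phi(\frakb)$ for basis words $\fraka,\frakb$ of lengths $m,n$. From (\ref{eq:msh}) one reads off that every term of $\fraka\diamond_\lambda\frakb$ is a word of length $m+n-j$ weighted by $\lambda^{j}$, where $j$ is the number of merged pairs, and the same underlying terms with coefficient $1$ appear in $\fraka\diamond_1\frakb$. Applying $\phi$ turns each such term into $\lambda^{j}\cdot\lambda^{m+n-j}=\lambda^{m+n}$ times the corresponding basis word, which is exactly the coefficient produced by $\phi(\fraka)\diamond_1\phi(\frakb)=\lambda^{m}\fraka\diamond_1\lambda^{n}\frakb$. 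Thus $\phi$ is an algebra isomorphism, and a clean way to present the argument is by induction on $m+n$ using the recursion (\ref{eq:quasi}), where each of the three branches produces the factor $\lambda^{m+n}$ on both sides.

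Composing $\phi$ with the Hazewinkel-Hoffman isomorphism $\sh_{\QQ,1}(S)\cong\QQ[\Lyn(S)]$ from Theorem~\ref{thm:qlyn}.(\ref{it:hh}) yields the desired isomorphism $\sh_{\QQ,\lambda}(S)\cong\QQ[\Lyn(S)]$. There is no serious obstacle; the only subtlety is the choice of the exponent in the rescaling, and the reason $r$ (rather than $r-1$ or $0$) is forced is precisely the merge-counting bookkeeping above, which also explains why the argument would fail over $\FF_p$ or $\ZZ$ unless $\lambda$ is a unit, foreshadowing the need for the more refined techniques developed in Sections~\ref{sec:chp}--\ref{sec:int}.
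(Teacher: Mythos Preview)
Your proposal is correct and follows essentially the same route as the paper: the case $\lambda=0$ is Radford's theorem, and for $\lambda\neq 0$ both you and the paper use the length-rescaling map $u_1\otimes\cdots\otimes u_r\mapsto \lambda^{r}\,u_1\otimes\cdots\otimes u_r$ to reduce to $\lambda=1$ and then invoke Hazewinkel--Hoffman. The only difference is cosmetic: the paper imports this isomorphism from \cite{EGsh} (stated there on the level of $\sha$ and then restricted to $\sh$), whereas you verify it directly via the merge-counting argument, which is a welcome bit of self-containment.
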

\begin{proof}
Fix a $\lambda\in \QQ$. If $\lambda =0$, then by definition, $\sh_{\QQ,\lambda}(\sg)$ is the shuffle algebra $Sh(\sg)$ on the $\QQ$-vector space $\QQ\, \sg$. By Theorem~\mref{thm:qlyn}.(\mref{it:rad}), we have $\sh_{\QQ,0}(\sg)=\QQ[\Lyn]$.
If $\lambda=1$, then as was shown in~\mcite{EGsh} and \mcite{G-Z2}, $\sh_{\QQ,1}(\sg)$ is the quasi-shuffle $\QQ$-algebra on the semigroup $S$ and thus is $\QQ[\Lyn(S)]$ by Theorem~\mref{thm:qlyn}.(\mref{it:hh}).

If $\lambda\neq 0,1$, the algebra isomorphism
$$
\begin{aligned}
f: &\sha_{\QQ,\lambda}(\QQ S) \to \sha_{\QQ,1}(\QQ S), \\
&a_0\ot \cdots \ot a_n\mapsto \lambda^n (a_0\ot \cdots\ot a_n), \forall a_0\ot \cdots \ot a_n \in \QQ S^{\ot (n+1)}
\end{aligned}
$$
{}from \cite{EGsh} (Lemma 2.8 and the comments afterward) restricts to an algebra isomorphism
$$
\begin{aligned}
f: &\sh_{\QQ,\lambda}(\QQ S) \to \sh_{\QQ,1}(\QQ S), \\
&a_1\ot \cdots \ot a_n\mapsto \lambda^n (a_1\ot \cdots\ot a_n), \forall a_1\ot \cdots \ot a_n \in \QQ S^{\ot n}.
\end{aligned}
$$
Thus a Lyndon word $\omega \in
\sh_{\QQ,1}(S)$ is sent to $\lambda^{\leng(\omega)} \omega\in
\sh_{\QQ,\lambda}(S)$ where $\leng(\omega)$ is the length of the
\word $\omega$. Since $\lambda\in \QQ$ is invertible,
$\sh_{\QQ,\lambda}(S)$ is still generated by $\Lyn(S)$. Thus
the theorem holds for all $\lambda\in \QQ$.
\end{proof}

\subsection{Free commutative Rota-Baxter algebras over a $\QQ$-algebra}
\mlabel{ss:rbaq}

We now apply Theorem~\mref{thm:msq} to free commutative Rota-Baxter algebras.

\begin{theorem}
Let $\sg$ be an ordered abelian monoid and let $\QQ \sg$ be the monoid algebra. Then
\begin{equation}
\sha_{\QQ,\lambda}(\QQ S)=\QQ S \ot \QQ[\Lyn(\sg)],
\mlabel{eq:shq}
\end{equation}
where $\Lyn(\sg)$ is the set of Lyndon words on $\sg$.
In particular, let $X$ be an ordered set. Let $M\com(X)$ be the free abelian monoid generated by $X$. Then
\begin{equation}
\sha_{\QQ,\lambda}(\QQ[X])=\QQ[\overline{\Lyn}(M\com(X))],
\mlabel{eq:shqx}
\end{equation}
where
$$ \overline{\Lyn}(M\com(X)):=X\cup \{1\ot w\ |\ w \in \Lyn(M\com(X))\}.$$
\mlabel{thm:rbl}
\end{theorem}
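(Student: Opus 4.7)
The plan is to deduce both identities of Theorem~\ref{thm:rbl} from Theorem~\ref{thm:msq} together with the factorization $\sha_{\QQ,\lambda}(\QQ S) = \QQ S \ot \sh_{\QQ,\lambda}(S)$ of Eq.~(\ref{eq:factor2}); no new combinatorics is required. The first identity is essentially immediate, and the second amounts to identifying the polynomial ring $\QQ[X]$ with the monoid algebra of the free abelian monoid $M\com(X)$ and then matching up free generators on the two sides.

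For Eq.~(\ref{eq:shq}), I would simply combine the two inputs. Since $\sg$ is an ordered abelian monoid it is \emph{a fortiori} an ordered abelian semigroup, so Theorem~\ref{thm:msq} gives $\sh_{\QQ,\lambda}(\sg) \cong \QQ[\Lyn(\sg)]$ as $\QQ$-algebras. Plugging this into Eq.~(\ref{eq:factor2}) yields
$$\sha_{\QQ,\lambda}(\QQ \sg) \;=\; \QQ\sg \ot \sh_{\QQ,\lambda}(\sg) \;\cong\; \QQ\sg \ot \QQ[\Lyn(\sg)],$$
which is the first assertion.

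For Eq.~(\ref{eq:shqx}), I would specialize the first part by taking $\sg = M\com(X)$, the free abelian monoid on the ordered set $X$, equipped with an order (e.g.\ the lexicographic one on monomials) that extends the order on $X$ and makes $M\com(X)$ an ordered abelian monoid so that Theorem~\ref{thm:msq} applies. Using the canonical identification $\QQ[X] \cong \QQ\,M\com(X)$ of the polynomial algebra with the monoid algebra, Eq.~(\ref{eq:shq}) specializes to
$$\sha_{\QQ,\lambda}(\QQ[X]) \;\cong\; \QQ[X] \ot \QQ[\Lyn(M\com(X))].$$
Under the canonical embedding $\QQ[X] \hookrightarrow \sha_{\QQ,\lambda}(\QQ[X])$ the set $X$ sits in the first tensor factor as a free polynomial generating set, while each Lyndon word $w \in \Lyn(M\com(X))$ is represented by $1 \ot w$ in the second factor. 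Since the tensor product of two polynomial algebras is again a polynomial algebra on the disjoint union of their generators, the right-hand side is $\QQ$-algebra isomorphic to $\QQ[\, X \;\cup\; \{1 \ot w : w \in \Lyn(M\com(X))\}\,] = \QQ[\overline{\Lyn}(M\com(X))]$, which is the claimed identity.

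The only point requiring mild care is bookkeeping: choosing the order on $M\com(X)$ so that the Lyndon word machinery used for Theorem~\ref{thm:msq} matches the natural embedding into $\sha_{\QQ,\lambda}(\QQ[X])$, and verifying that $X$ and $\{1\ot w\}_{w\in\Lyn(M\com(X))}$ remain algebraically independent after being combined. Neither step presents a genuine difficulty beyond what has already been absorbed into Theorem~\ref{thm:msq}, so I do not anticipate a real obstacle.
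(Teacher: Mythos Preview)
Your proposal is correct and follows essentially the same route as the paper: both parts are deduced directly from Theorem~\ref{thm:msq} and the factorization in Eq.~(\ref{eq:factor2}), with the second part obtained by specializing to $\sg = M\com(X)$ and using $\QQ[X]\cong\QQ\,M\com(X)$. The paper's proof is even more terse, but the content is identical.
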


\begin{proof}
By Theorem~\mref{thm:msq} and Eq.~(\mref{eq:factor2}), we have
$\sha_{\QQ,\lambda}(\QQ S)=\QQ S \ot \QQ[\Lyn]$
by Eq.~(\mref{eq:decomp}).

For the second statement, let $X$ be an ordered set.
Then $\QQ[X]=\QQ  M\com(X)$ and
$$ \sha_{\QQ,\lambda}(\QQ[X])=\QQ[X]\ot \sh_{\QQ,\lambda}(M\com(X))
    =\QQ[X]\ot \QQ[\Lyn(M\com(X))]=\QQ[\overline{\Lyn}(M\com(X))].$$
\end{proof}

\section{Structure theorems on $\FF_p$}
\mlabel{sec:chp}
Given a prime number $p$, we now consider the algebra structure of the mixable shuffle algebras $\sh_{\FF_p,\lambda}(S)$ where $S$ is an ordered semigroup with base ring $\FF_p$. Here the situation is quite different from the case when the base ring is $\QQ$. As an easy
illustration, let $x\in S$, then the shuffle product $x^{\ssha p}=x^{\shpr_0 p}=p! x^{\ot p}=0$ in
$\sh_{\FF_p,0}(X)$. We will show that this phenomenon prevails when
the weight $\lambda$ is zero and, as a result, $\sh_{\FF_p,0}(S)$ has
no polynomial subalgebras. When $\lambda \neq 0$, the structure of $\sh_{\FF_p,\lambda}(S)$ is more diversified. For a large class of abelian semigroups $S$, including free semigroups, free monoids, $p$-nilpotent groups and $p$-idempotent groups, we determine the factorization of $\sh_{\FF_p,\lambda}(S)$ into a polynomial part and a non-polynomial part. We then apply these structure theorems to the free commutative Rota-Baxter algebras $\sha_{\FF_p,\lambda}(\FF_p S)$ with coefficients in $\FF_p$.

\subsection{Notations and preparatories}

Let $(S,<)$ be an ordered set and let the free monoid $M\tcon(S)$ be as defined in Eq.~(\mref{eq:tmon}).
Recall that we use $<\lexord$ to denote the lexicographic order on $M\tcon(S)$ induced from the order on $S$.
We will use another order $<\lengord$ on $M\tcon(S)$.
\begin{defn}
Let $(S,<)$ be an ordered semigroup. For $u=u_1\ot \cdots \ot u_r\in S^{\ot r}$ and $v=v_1\ot \cdots \ot v_s\in S^{\ot s}$, define
\begin{equation}
u<\lengord v \Leftrightarrow
\left\{\begin{array}{l} r<s {\rm\ or\ } \\
     r=s {\rm\ and\ \exists\ } 1\leq i\leq r, {\rm\ such\ that\ } u_1=v_1,\cdots, u_{i-1}=v_{i-1},u_i<v_i.
\end{array} \right .
\mlabel{eq:order}
\end{equation}
$<\lengord$ will be called the {\bf pro-length order} (or \mbox{\bf L-order} for short).
\mlabel{de:order}
\end{defn}
We note that, when $u$ and $v$ have the same length, $u<\lexord v$ if and only if $u<\lengord v$.
Recall that a well-ordered set is a totally ordered set whose every non-empty subset has a smallest element.

\begin{lemma}
Let $(S,<)$ be a well-ordered set. Then the \lord $<\lengord$ defines a well order on the set $M\tcon(S)$.
\mlabel{lem:order}
\end{lemma}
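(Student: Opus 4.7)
The plan is to verify two things: that $<_{\mrm{leng}}$ is a total order and that every non-empty subset of $M^{\otimes}(S)$ has a least element. Totality is immediate from the definition: two words of different lengths are comparable via the first clause of Definition~\ref{de:order}, and two words of equal length are compared by the lexicographic order induced from $<$ on $S$, which is total since $<$ is a well-order.

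For well-foundedness, let $T \subseteq M^{\otimes}(S)$ be non-empty. First I would reduce to words of a single length: the set $L(T) = \{\ell(u) \mid u \in T\} \subseteq \mathbb{Z}_{\geq 0}$ is a non-empty subset of $\mathbb{N}$, hence has a minimum $r$, and by construction of $<_{\mrm{leng}}$, any element of $T$ of length $r$ is $<_{\mrm{leng}}$-smaller than every element of $T$ of length $>r$. Thus it suffices to exhibit a smallest element inside the non-empty set $T_r := \{u \in T \mid \ell(u) = r\}$ under the lexicographic order, which on words of length $r$ coincides with $<_{\mrm{leng}}$. If $r = 0$ then $T_r = \{1\}$ and we are done.

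For $r \geq 1$, the standard argument is to peel off coordinates one at a time, using the well-ordering of $S$. Let $A_1 := \{u_1 \mid u_1 \otimes \cdots \otimes u_r \in T_r\} \subseteq S$; it is non-empty, so has a least element $s_1$. Restrict to $T_r^{(1)} := \{u \in T_r \mid u_1 = s_1\}$, which is again non-empty, and let $A_2$ be the set of second coordinates of elements of $T_r^{(1)}$; pick its minimum $s_2$, and iterate. After $r$ steps the surviving set $T_r^{(r)}$ consists of the single word $s_1 \otimes \cdots \otimes s_r$, and by construction this word is $\leq_{\mrm{lex}}$ every element of $T_r$, hence is the $<_{\mrm{leng}}$-minimum of $T$.

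The only delicate point is seeing that the iterated coordinate minimization really does identify the lex-least element — but this is the classical fact that the lexicographic product of finitely many well-orders is again a well-order, and I would either invoke it directly or spell out a short induction on $r$. No step is a real obstacle; the lemma is a bookkeeping exercise combining the well-ordering of $\mathbb{N}$ (for lengths) with that of $S$ (coordinate by coordinate).
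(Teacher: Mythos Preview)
Your proposal is correct and follows essentially the same approach as the paper: both arguments first restrict to the subset of words of minimal length, then successively minimize the coordinates from left to right using the well-order on $S$, arriving at a single word that is the $<\lengord$-minimum of $T$. Your write-up is somewhat more explicit (you separate out the $r=0$ case and note the connection with the lexicographic product of well-orders), but the core idea is identical.
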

\begin{proof}
$<\lengord$ is clearly a total order on $M\tcon(S)$. Let $T$ be a
non-empty subset of $M\tcon(S)$. Define $T_0$ to be the subset of
$T$ consisting of \words of the smallest length $r$, $T_1$ to
be the subset of $T_0$ consisting of tensors $u_1\ot \cdots \ot u_r$
such that $u_1$ is the smallest, $T_2$ to be the the subset of $T_1$
consisting of tensors $u_1\ot \cdots \ot u_r$ such that $u_2$ is the
smallest, $\cdots$, $T_r$ to be the subset of $T_{r-1}$ consisting of
tensors $u_1\ot \cdots \ot u_r$ such that $u_r$ is the smallest.
Then the smallest element of $T$ is the unique element of $T_r$.
\end{proof}

We list the following results for later references.
\begin{theorem}
\begin{enumerate}
\item
{\bf (Chen-Fox-Lyndon factorization)~\mcite{Re}}
Any word $w\in M\tcon(S)$ can be written uniquely as a tensor product of Lyndon words
$$w=w_1^{\otimes i_1}\otimes\cdots \otimes w_{k}^{\otimes
i_k},\quad w_1>\cdots >w_k,\ i_1, \cdots , i_k\geq 1.$$
\mlabel{it:cfl}
\item
{\bf (Tensor form of freshman's dream) \cite[Theorem~4.1 ]{Gust}}
For any $w=w_1\ot \cdots \ot w_n\in M\tcon(S)$ and $\lambda\in \bfk$,
\begin{equation}
w^{\shprl p} \equiv \lambda^{(p-1)(n-1)} w_1^p\ot \cdots \ot w_n^p \mod p.\mlabel{eq:41}
\end{equation}
\mlabel{it:41}
\end{enumerate}
\mlabel{lem:known}
\end{theorem}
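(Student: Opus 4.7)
For part (a), the plan is the standard existence-and-uniqueness argument for the Chen-Fox-Lyndon factorization, by induction on the length of $w$. For existence: given $w \in M\tcon(S)$, let $w_k$ denote the longest Lyndon word occurring as a tensor suffix of $w$, and write $w = u \otimes w_k$. By the induction hypothesis, $u$ admits a non-increasing Lyndon factorization, and a short argument (using that no proper right factor of a Lyndon word is lexicographically smaller than the word itself) shows that the last factor of $u$ is $\geq w_k$. Collecting equal consecutive factors yields the stated form. For uniqueness, one observes that in any such factorization the final factor is forced to equal the longest Lyndon suffix of $w$, after which one iterates on the prefix.

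For part (b), the cleanest route, when $\lambda$ is invertible in $\bfk$, is to reduce to the quasi-shuffle case $\lambda = 1$ via the algebra isomorphism $f : \sh_{\bfk,\lambda}(A) \to \sh_{\bfk,1}(A)$ defined by $a_1 \otimes \cdots \otimes a_m \mapsto \lambda^m\, a_1 \otimes \cdots \otimes a_m$, which is the map used in the proof of Theorem~\ref{thm:msq}. Combined with Hoffman's Frobenius-type congruence for the quasi-shuffle algebra (itself proved by a cyclic $C_p$-action on quasi-shuffle terms), this yields the stated congruence, with the power of $\lambda$ tracked by comparing the length of $w$ with that of $w_1^p \otimes \cdots \otimes w_n^p$ under $f$. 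For general (possibly non-invertible) $\lambda$ one either passes through the universal case $\bfk[\lambda]$ and specializes, or argues directly: each term in $w^{\shprl p}$ corresponds to a mixable-shuffle pattern interleaving $p$ labeled copies of $w$, the cyclic group $C_p$ acts on patterns by rotating the labels, and since $p$ is prime, non-$C_p$-fixed patterns lie in orbits of size $p$ and vanish modulo $p$. The $C_p$-fixed patterns are precisely those assigning all $p$ copies of each $w_i$ to a single common output slot; summing their $\lambda$-weights gives the stated right-hand side.

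The main obstacle will lie in (b): one has to track how many pairwise merges each $C_p$-fixed pattern involves in order to recover the correct exponent of $\lambda$. Taking the reduction-to-$\lambda=1$ route sidesteps most of this bookkeeping, at the price of having to invert $\lambda$ (or work over $\bfk[\lambda]$ first). My plan would be to present the isomorphism-plus-Hoffman argument as the main proof and fall back on the direct cyclic-action computation only to handle the non-invertible case uniformly.
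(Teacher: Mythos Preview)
The paper does not prove this theorem: it is introduced with ``We list the following results for later references,'' with part~(\ref{it:cfl}) attributed to Reutenauer and part~(\ref{it:41}) to \cite[Theorem~4.1]{Gust}. So there is no argument in the paper to compare against, and your outlines are the standard routes one would take.

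One genuine caution on part~(\ref{it:41}). If you carry your isomorphism argument through---apply $f(u)=\lambda^{\leng(u)}u$, use the $\lambda=1$ congruence $w^{\shpr_1 p}\equiv w_1^p\otimes\cdots\otimes w_n^p$, then apply $f^{-1}$ to the length-$n$ result---you will find the exponent $n(p-1)$, not the $(n-1)(p-1)$ printed in Eq.~(\ref{eq:41}). A direct check confirms this: for a single letter $a$ one has $a^{\shprl 2}=2\,a\otimes a+\lambda\,a^2\equiv \lambda\,a^2\pmod 2$, whereas the stated formula with $n=1$ predicts coefficient $\lambda^0=1$. The same happens for $n=2$, $p=2$, where the unique length-$2$ term of $(w_1\otimes w_2)^{\shprl 2}$ is $\lambda^2\,w_1^2\otimes w_2^2$. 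This discrepancy is immaterial for every application of Eq.~(\ref{eq:41}) made later in the paper (all are over $\FF_p$ with $\lambda$ a unit, so $\lambda^{p-1}=1$), but you should not expect your argument to reproduce the exponent exactly as printed.
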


\noindent
{\bf Notation:}
For $u\in \sh_{\bfk,\lambda}(S)$ and $w\in M\tcon(S)$, we write
$$ u=w +\text{\ lower \lord terms}$$
if $u-w$ is a linear combination of words in $M\tcon(S)$ with \lord less than $w$.

\begin{lemma}
The following statements hold in $\sh_{\ZZ,\lambda}(S)$.
\begin{enumerate}
\item
Let $w=w_1^{\otimes i_1}\otimes\cdots \otimes w_{k}^{\otimes
i_k}$ be the Chen-Fox-Lyndon factorization.
We have
$$w_1^{\shprl  i_1}\shprl \cdots \shprl  w_k^{\shprl  i_k}=(i_1!\cdots i_k!)w+\text{lower \lord terms}.
$$
\mlabel{it:basiclyndon2}
\item
Let $u$ be a Lyndon word and let $v$ be a word with $u>v$. Then
$$
u^{\ot s} \shprl  v= u^{\ot s}\ot v + \text{lower \lord terms}.
$$
\mlabel{it:lead1}
\item
Let $u$ be a Lyndon word and let $n_1,\cdots n_k$ be positive integers. Then
$$ u^{\otimes n_1} \shpr \cdots \shpr  u^{\otimes n_k}=
\frac{(n_1+\cdots +n_k)!}{n_1!\cdots n_k!}  u^{\otimes
(n_1+\cdots n_k)}+ \text{lower \lord terms}.$$
\mlabel{it:lead2}
\item
For any Lyndon word $u$ and integer $n=a_0+a_1p+a_2p^2+\cdots
a_kp^k$ with $a_0,\cdots, a_k\in \{ 0,1,\cdots, p-1\}$,
we have
\begin{equation}
(u^{\ot p^0})^{\shprl a_0}\shprl \cdots \shprl (u^{\ot p^{k}})^{\shprl a_k}= N_n u^{\ot n} + \text{ lower \lord terms},
\mlabel{eq:lead3}
\end{equation}
where $N_n$ is a $p$-adic unit.
\mlabel{it:lead3}
\end{enumerate}
\mlabel{lem:lead}
\end{lemma}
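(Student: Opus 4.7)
Plan of proof. The plan is to prove the four parts in the order (b), (c), (a), (d), each time identifying a leading L-order term and controlling corrections by the following \emph{propagation principle}: in $\shprl$, merges strictly shorten words (hence lower the L-order), and if $A = c_A w_A + A^{\mathrm{low}}$ and $B = c_B w_B + B^{\mathrm{low}}$ with $A^{\mathrm{low}},B^{\mathrm{low}}$ strictly below in L-order, then the cross contributions $w_A \shprl B^{\mathrm{low}}$, $A^{\mathrm{low}} \shprl w_B$, $A^{\mathrm{low}} \shprl B^{\mathrm{low}}$ all land strictly below the leading L-order term of $w_A \shprl w_B$. This is verified by comparing word lengths first (merges versus pure shuffles) and then lex-orders within a fixed length; it is the only bookkeeping tool needed. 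For (b) itself, merges shorten, so only pure shuffles contribute at maximal length; the word $u^{\ot s}\ot v$ arises from the unique ``all of $u^{\ot s}$ first'' shuffle and so has coefficient $1$. For any other pure shuffle $\sigma$, let $j$ be the first position at which $\sigma$ draws a letter of $v$; then $\sigma$ agrees with $u^{\ot s}\ot v$ through position $j-1$, and at position $j$ we compare $v_{(1)}$ (from $\sigma$) with the $j$-th letter of $u^{\ot s}$ (from $u^{\ot s}\ot v$). The classical Lyndon property --- every proper suffix of the infinite word $u^\omega = u\ot u\ot\cdots$ is $\ge u^\omega$ in lex --- together with $v<\lexord u$ forces the tail of $u^{\ot s}\ot v$ starting at $j$ to strictly dominate any completion $\sigma$ can choose, so $\sigma<\lexord u^{\ot s}\ot v$.

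Part (c) proceeds by induction on $k$; the base $k=1$ is trivial. For $k=2$, the coefficient of $u^{\ot(n_1+n_2)}$ in $u^{\ot n_1}\shprl u^{\ot n_2}$ counts the letter-level pure shuffles producing the block-aligned word $u^{\ot(n_1+n_2)}$; the Lyndon property of $u$ forces any such shuffle to itself be block-aligned (each $u$-block in the output coming entirely from one of the two inputs), giving exactly $\binom{n_1+n_2}{n_1}$ shuffles. All other pure shuffles yield lex-smaller words of the same length (by the argument of (b)), merged shuffles are shorter, and the inductive step follows from associativity and $\binom{N}{n_1,\ldots,n_k}=\binom{N}{n_1}\binom{N-n_1}{n_2,\ldots,n_k}$. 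Part (a) is then immediate. Applying (c) with all parts equal to $1$ gives $w_j^{\shprl i_j}=i_j!\,w_j^{\ot i_j}+(\text{lower L-order})$ for each Lyndon factor; iterating (b) along the Chen-Fox-Lyndon factorization --- valid since $w_1>w_2>\cdots>w_k$ --- yields $w_1^{\ot i_1}\shprl\cdots\shprl w_k^{\ot i_k}=w+(\text{lower L-order})$. Combining and invoking the propagation principle to discard cross terms produces the coefficient $i_1!\cdots i_k!$ in front of $w$.

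For (d), two applications of (c) suffice. First, for each $j$,
\[
(u^{\ot p^j})^{\shprl a_j} \;=\; \frac{(a_j p^j)!}{(p^j!)^{a_j}}\,u^{\ot a_j p^j} + (\text{lower L-order}).
\]
Shuffling the results together and applying (c) once more with parts $a_0,a_1 p,\ldots,a_k p^k$ summing to $n$ produces $u^{\ot n}$ as the leading L-order term with coefficient
\[
N_n \;=\; \prod_{j=0}^{k}\frac{(a_j p^j)!}{(p^j!)^{a_j}}\cdot\frac{n!}{\prod_{j=0}^{k}(a_j p^j)!} \;=\; \frac{n!}{\prod_{j=0}^{k}(p^j!)^{a_j}}.
\]
Legendre's formula $v_p(m!)=(m-\sigma_p(m))/(p-1)$, where $\sigma_p$ is the base-$p$ digit sum, gives $v_p(n!)=(n-\sum_j a_j)/(p-1)$ and $v_p\bigl(\prod_j(p^j!)^{a_j}\bigr)=\sum_j a_j(p^j-1)/(p-1)=(n-\sum_j a_j)/(p-1)$, so $v_p(N_n)=0$ and $N_n$ is a $p$-adic unit. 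The main obstacle throughout is the lower-L-order bookkeeping across iterated shuffle products; the propagation principle of the first paragraph is precisely what makes this manageable, so the real content is the Lyndon-based lex comparison in (b) and the block-alignment count in (c).
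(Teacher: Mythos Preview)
Your argument is essentially correct and arrives at the same statements, but it follows a genuinely different route from the paper. The paper proves (a) first by invoking the classical weight--zero result (Radford/Reutenauer: for the pure shuffle, $w_1^{\shpr_0 i_1}\shpr_0\cdots\shpr_0 w_k^{\shpr_0 i_k}=(i_1!\cdots i_k!)\,w+\text{lex-lower same-length terms}$) and then observing that the extra mixable terms in $\shprl$ are strictly shorter, hence lower in the L-order. From (a) it deduces (b) by matching two applications of (a) to the CFL factorization of $u^{\otimes s}\otimes v$, deduces (c) by dividing through by $n_i!$ over $\QQ$ and reassembling, and obtains (d) as the special case $n_j=p^j$ of (c), citing Hazewinkel for the $p$-adic unit property of $N_n=n!/\prod_j(p^j!)^{a_j}$. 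Your route instead proves (b) and (c) by direct Lyndon combinatorics, then assembles (a) from them, and verifies (d) via an explicit Legendre computation. Your approach is more self-contained (no black-box appeal to Reutenauer or Hazewinkel); the paper's is shorter because it outsources the hard combinatorics.

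Two places in your sketch deserve more care. First, in (c) you assert that ``the Lyndon property of $u$ forces any such shuffle to itself be block-aligned'' and that ``all other pure shuffles yield lex-smaller words \dots\ by the argument of (b).'' But the hypothesis of (b) is $u>v$, whereas here the second factor is $u^{\otimes n_2}$ and $u\le_{\mathrm{lex}} u^{\otimes n_2}$, so (b) does not apply directly; you need a separate argument that among letter-level shuffles of $u^{\otimes n_1}$ with $u^{\otimes n_2}$ the word $u^{\otimes N}$ is lex-maximal and is achieved only by block-aligned shuffles. This is true for Lyndon $u$ (it is equivalent to Reutenauer's coefficient $N!$ for $u^{\shpr_0 N}$), but it is not a one-line consequence of ``$u$ is Lyndon''. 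Second, in deducing (a) you iterate (b) along the CFL factorization, which requires $w_1>_{\mathrm{lex}} w_2^{\otimes i_2}\otimes\cdots\otimes w_k^{\otimes i_k}$; this holds because a Lyndon word is strictly larger than any word whose CFL factors are all strictly smaller, but you should state and use that fact rather than only $w_1>w_2$. Your propagation principle and the Legendre calculation for (d) are fine.
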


\begin{proof}
(\mref{it:basiclyndon2}).
As is well-known~\mcite{Re}, for the shuffle product $\ssha=\shpr_0$ (mixable shuffle product of weight 0), we have
$$w_1^{\shpr_0  i_1}\shpr_0 \cdots \shpr_0  w_k^{\shpr_0  i_k}=(i_1!\cdots i_k!)w+
\sum_{\leng(u)=\leng(w), u<w}\alpha_u u$$
for some natural integer $\alpha_u$.
By the definition of the mixable shuffle product of weight $\lambda$,
$$w_1^{\shprl  i_1}\shprl \cdots \shprl  w_k^{\shprl  i_k}=w_1^{\shpr_0  i_1}\shpr_0 \cdots \shpr_0  w_k^{\shpr_0  i_k}+ \text{\rm terms of length}<\leng(w).$$
Since either $\leng(u)=\leng(w)$ with $u<\lexord w$ or $\leng(u)<\leng(w)$ implies $u<\lengord w$, we are done.
\smallskip

(\mref{it:lead1}).
Let $v=v_1^{\otimes i_1}\ot\cdots\ot v_{k}^{\otimes i_k}$ be the Chen-Fox-Lyndon factorization. Since $v_1$ is a Lyndon word, we have $v>v_1$. Since it is assume that $v<u$,
we have $u>v_1$. Thus $u^{\ot s}\ot v= u^{\ot s}\ot v_1^{\otimes i_1}\ot\cdots\ot v_{k}^{\otimes i_k}$ is the Chen-Fox-Lyndon factorization of $u^{\ot s}\ot v$. Then by Item (\mref{it:basiclyndon2}),
$$ u^{\shprl s}\shprl v_1^{\shprl i_1}\shprl \cdots \shprl v_k^{\shprl i_k}=(s!)(i_1!)\cdots (i_k!)u^{\ot s}\ot v + \text{\rm lower \lord terms}. $$
On the other hand, applying Item (\mref{it:basiclyndon2}) separately to $u^{\ot s}$ and
$v=v_1^{\otimes i_1}\ot\cdots\ot v_{k}^{\otimes i_k}$, we have
$$ u^{\shprl s}\shprl v_1^{\shprl i_1}\shprl \cdots \shprl v_k^{\shprl i_k} = (s!)u^{\ot s}\shprl  ((i_1!)\cdots (i_k!))v + \text{terms with \lord lower than } u^{\ot s}\ot v.$$
This gives what we need.

(\mref{it:lead2}). By Item (\mref{it:basiclyndon2}) we have
$$\frac{1}{n_i!}u^{\shprl  n_i}=u^{\otimes n_i}+ \text{lower \lord
terms}.$$ So
$$ \begin{aligned}u^{\otimes n_1} \shprl  \cdots \shprl  u^{\otimes n_k} &=
\frac{1}{n_1!\cdots n_k!} u^{\shprl  (n_1+\cdots +n_k)}+ \text{terms with \lord lower than } u^{\otimes (n_1+\cdots +n_k)}\\
&= \frac{(n_1+\cdots +n_k)!}{n_1!\cdots n_k!}
u^{\otimes (n_1+\cdots +n_k)}+ \text{lower \lord terms},
\end{aligned}$$
as desired.

(\mref{it:lead3}) is a special case of (\mref{it:lead2}) since
$N_n=\frac{n!}{\prod_{j=0}^k (p^j!)^{a_j}}$ is a $p$-adic unit~\cite[Corollary 7.6]{Ha}.
\end{proof}

Let $A$ be a commutative $\bfk$-algebra. For a pure tensor $a$ in
$A^{\ot n}$, denote $a^{\ot k}$ to be the $k$ fold tensor power
of $a$. For a set $Y$ of pure tensors and a prime number $p$, denote
\begin{equation}
Y^{\ot k}=\{a^{\ot k}\ |\ a\in Y\},\quad  \rmt(Y)=\coprod_{k\geq 0} Y^{\ot p^k}.
\mlabel{eq:check}
\end{equation}
Here $\rmt$ stands for tensor power. When $Y=\Lyn$ is the set of Lyndon words in $\sh_{\bfk,\lambda}(S)$ where $S$ is an ordered semigroup, we denote $\TL=\rmt(\Lyn)$.

We will use the following proposition several times.
\begin{prop} Let $\bfk$ be either $\FF_p$ or $\ZZ_p$. Let $S$ be a well-ordered semigroup and let $\lambda\in \bfk$. Denote $\shpr=\shprl$.
\begin{enumerate}
\item
As a $\bfk$-algebra, $\sh_{\bfk,\lambda}(S)$ is generated by $\TL$ for any $\lambda\in \bfk$.
\mlabel{it:span}
\item
The subset
\begin{equation}
U:=\{1\}\cup \{w_1^{\shpr n_1}\shpr \cdots \shpr w_r^{\shpr n_r} |\ w_j\in \TL, w_1>\cdots>w_r, 1\leq n_j\leq p-1, 1\leq j\leq r, r\geq 1\}
\mlabel{eq:u}
\end{equation}
of $\sh_{\bfk,\lambda}(S)$ is linearly independent.
\mlabel{it:ind}
\end{enumerate}
\mlabel{pp:onto}
\end{prop}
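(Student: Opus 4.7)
The plan is to establish (\mref{it:span}) and (\mref{it:ind}) separately by reducing each to a leading-term computation in the pro-length order $<\lengord$, combining the Chen-Fox-Lyndon factorization (Theorem~\mref{lem:known}.(\mref{it:cfl})) with Lemma~\mref{lem:lead}. Throughout let $B$ denote the $\bfk$-subalgebra of $\sh_{\bfk,\lambda}(S)$ generated by $\TL$.

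For (\mref{it:span}) I argue by strong induction on $<\lengord$, which is a well-order by Lemma~\mref{lem:order}, that every word $w \in M\tcon(S)$ lies in $B$. Given the Chen-Fox-Lyndon factorization $w = w_1^{\ot i_1}\ot\cdots\ot w_k^{\ot i_k}$ with Lyndon $w_1>\cdots>w_k$, I expand each exponent in base $p$ as $i_j = \sum_l a_{j,l}\, p^l$ with $0\le a_{j,l}\le p-1$, and let $y$ be the shuffle product of all the factors $(w_j^{\ot p^l})^{\shpr a_{j,l}}$. Since every $w_j^{\ot p^l}$ belongs to $\TL$, we have $y\in B$. Lemma~\mref{lem:lead}.(\mref{it:lead3}) applied inside each $j$-block identifies the partial product over $l$ as $N_{i_j}\, w_j^{\ot i_j}+(\text{lower \lord})$ with $N_{i_j}$ a $p$-adic unit, and iterating Lemma~\mref{lem:lead}.(\mref{it:lead1}) across the outer product over $j$ yields
\[
y \;=\; \Bigl(\prod_{j=1}^k N_{i_j}\Bigr)\, w + (\text{lower \lord terms}).
\]
Since $\bfk$ is $\FF_p$ or $\ZZ_p$, the scalar $\prod_j N_{i_j}$ is invertible in $\bfk$, and the lower \lord terms belong to $B$ by the inductive hypothesis; hence $w \in B$.

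For (\mref{it:ind}) I compute the leading word (in $<\lengord$) of each nontrivial element of $U$. Write $w_j=u_j^{\ot p^{e_j}}$ with $u_j\in\Lyn(S)$, and group the factors by the value of $u_j$: let $v_1>\cdots>v_s$ be the distinct Lyndon words that occur, set $J_t=\{j:u_j=v_t\}$, and put $m_t=\sum_{j\in J_t}n_j p^{e_j}$. Distinctness of the $w_j$ forces the $e_j$ within each $J_t$ to be distinct, and the bounds $1\le n_j\le p-1$ make the $n_j$'s the nonzero base-$p$ digits of $m_t$. Lemma~\mref{lem:lead}.(\mref{it:lead3}) inside each group and Lemma~\mref{lem:lead}.(\mref{it:lead1}) across groups give
\[
w_1^{\shpr n_1}\shpr\cdots\shpr w_r^{\shpr n_r} \;=\; \Bigl(\prod_t N_{m_t}\Bigr)\, v_1^{\ot m_1}\ot\cdots\ot v_s^{\ot m_s}+(\text{lower \lord}),
\]
whose leading word is already in Chen-Fox-Lyndon form. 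By uniqueness of the Chen-Fox-Lyndon factorization and of base-$p$ expansions, distinct elements of $U$ produce distinct leading words, and $1\in U$ contributes the empty word; since the leading coefficients are $p$-adic units, the set $U$ is linearly independent.

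The main obstacle is verifying the hypothesis of Lemma~\mref{lem:lead}.(\mref{it:lead1}) when it is iterated, namely that $w_j>w_{j+1}^{\ot i_{j+1}}\ot\cdots\ot w_k^{\ot i_k}$ in the lexicographic order whenever $w_j>w_{j+1}$ are adjacent Lyndon factors. This reduces to the standard Lyndon-word fact that $u>v$ (Lyndon, lex) implies $u>v^{\ot n}$ for every $n\ge 1$, provable by a short case analysis based on whether $v$ is a proper prefix of $u$ and invoking the defining property that a Lyndon word is strictly less than each of its proper suffixes. A secondary check is that the cross-terms arising when expanding products of the form $(N_j w_j^{\ot i_j}+\text{lower})\shpr\cdots$ stay in lower \lord; this follows from length-monotonicity of the shuffle product together with the definition of $<\lengord$.
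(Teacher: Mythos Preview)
Your proposal is correct and follows essentially the same route as the paper: both parts rest on identifying the L-order leading term via Lemma~\mref{lem:lead}.(\mref{it:lead3}) inside each Lyndon block and Lemma~\mref{lem:lead}.(\mref{it:lead1}) across blocks, and the paper's argument for (\mref{it:ind}) (see Eq.~(\mref{eq:standbase})) is virtually identical to yours, including the same implicit handling of cross terms. The only organizational difference is in (\mref{it:span}): rather than computing the full product at once, the paper separates the cases $r=1$ (where (\mref{it:lead3}) alone suffices) and $r>1$ (where a single application of (\mref{it:lead1}) to the actual word $v=w_2^{\ot i_2}\ot\cdots\ot w_k^{\ot i_k}$ reduces to strictly shorter words), which sidesteps the cross-term bookkeeping you flag as your ``secondary check.''
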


\begin{proof}
(\mref{it:span}). Let $\sh_{\bfk,\lambda}(S)'$ be the $\bfk$-subalgebra of $\sh_{\bfk,\lambda}(S)$ generated by $\TL$. We just need to prove $M\tcon(S)\subseteq \sh_{\bfk,\lambda}(S)'$ by contradiction. First of all, the smallest element in $M\tcon(S)$ is the 1-tensor $s_0$ where $s_0$ denotes the smallest
element of the well-ordered semigroup $S$. Since $s_0$ is a Lyndon word, $s_0$ is in $\sh_{\bfk,\lambda}(S)'$. Therefore $M\tcon(S)\backslash \sh_{\bfk,\lambda}(S)'$ is not $M\tcon(S)$.
Suppose $M\tcon(S)\not\subseteq \sh_{\bfk,\lambda}(S)'$, then $M\tcon(S)\backslash \sh_{\bfk,\lambda}(S)'$ is not empty. Since by Lemma~\mref{lem:order}, $M\tcon(S)$ is a well-ordered set with respect to the \lord, there is a smallest element $w$ in $M\tcon(S)\backslash \sh_{\bfk,\lambda}(S)'$. Let $w=w_1^{\ot i_1}\ot \cdots \ot w_r^{i_r}, w_1>\cdots >w_r,$ be the Chen-Fox-Lyndon factorization of $w$.
\smallskip

Suppose $r=1$. Then $w=w_1^{\ot n}$ for some $n\geq 1$. Using the notation of Lemma~\mref{lem:lead}.(\mref{it:lead3}), we have
$$   w_1^{\ot n} = N_n^{-1} (w_1^{\ot p^0})^{\shpr a_0}\shpr \cdots \shpr (w_1^{\ot p^{r}})^{\shpr a_r} + \text{\rm terms with \lord lower than\ } w_1^{\ot n},$$
where $N_n$ is a $p$-adic unit. Since
$(w_1^{\ot p^0})^{\shpr a_0}\shpr \cdots \shpr (w_1^{\ot p^{r}})^{\shpr a_r}$
is a product of the elements $w_1^{\ot p^i},i\geq 0,$ that are already in $\TL$, this product is in $\sh_{\bfk,\lambda}(S)'$. By the minimality of $w=w_1^{\ot n}$, the other terms on the right hand side of the above equation are also in $\sh_{\bfk,\lambda}(S)'$. Thus
$w_1^{\ot n}$ is in $\sh_{\bfk,\lambda}(S)'$. This is a contradiction.
\medskip

Suppose $r>1$. Then by the Chen-Fox-Lyndon factorization, we have $w_2^{\ot i_2}\ot\cdots \ot w_r^{i_r}< w_1$. Hence Lemma~\mref{lem:lead} (\mref{it:lead1}) gives
$$w=w_1^{\ot i_1}\ot w_2^{\ot i_2}\ot\cdots \ot w_r^{i_r}=w_1^{\ot i_1} \shpr (w_2^{\ot i_2}\ot\cdots \ot w_r^{i_r}) + \text{\rm terms with \lord lower than } w.
$$
By the minimality of $w$, we have $w_1^{\ot s}, w_2^{\ot i_2}\ot\cdots \ot w_r^{i_r}\in \sh(S)'$ since they have lengths
shorter than $w$ and hence L-orders lower than $w$. Therefore, $w$ is also in $\sh(X)'$. This again is a contradiction and completes our proof that $M\tcon(S)\subseteq \sh_{\bfk,\lambda}(S)'$.
\medskip

(\mref{it:ind}).
Define
\begin{equation}
\Gamma=\{ \gamma:\TL \rightarrow \{ 0, \cdots, p-1 \}\ |\
\gamma \text{ has finite support} \}.
\mlabel{eq:gamma}
\end{equation}
Then we have
\begin{equation}
U=\{w_\gamma:=\bigshpr_{w\in \TL} w^{\shprl \gamma(w)}  \ |\ \gamma\in \Gamma\}.
\mlabel{eq:u3}
\end{equation}
For $\gamma\neq 0$, let the support of $\gamma$ be $\{w_1,\cdots,w_r\}\subseteq \TL$ with $w_1>\cdots>w_r$. Note that each $w_i$ is a $u^{\ot p^j}$ for some $u\in \Lyn$ and $j\geq 0$. Let $u_1>\cdots>u_t$ be such $u$'s in $\Lyn$. Then
$$ (w_1,\cdots,w_r)= (u_1^{\ot p^{i_{1,1}}},\cdots,u_1^{\ot p^{i_{1,a_1}}},u_2^{\ot p^{i_{2,1}}},\cdots, u_2^{\ot p^{i_{2,a_2}}},\cdots, u_t^{\ot p^{i_{t,1}}},\cdots,u_t^{\ot p^{i_{t,a_t}}}),$$
where $i_{j,1}>\cdots > i_{j,a_j}$, $a_j\geq 1, 1\leq j\leq t$.
Thus
\begin{equation}
\begin{aligned}
w_\gamma &=w_1^{\shprl \gamma(w_1)}\shprl\cdots\shprl w_r^{\shprl \gamma(w_r)}\\
&= \bigshprl_{j=1}^{t\ } \big(\bigshprl_{k=1}^{a_j}
(u_j^{\ot p^{i_{j,k}}})^{\shprl \gamma(u_j^{\ot p^{i_{j,k}}})}\big)
\\
&=\bigshprl_{j=1}^{t\ } \big(\bigshprl_{\ell=1}^{\infty\ }
(u_j^{\ot p^{\ell}})^{\shprl \gamma(u_j^{\ot p^{\ell}})}\big)
\end{aligned}
\mlabel{eq:uws}
\end{equation}
since $\gamma(u_j^{\ot p^\ell})=0$ outside the support of $\gamma$. Similarly,
\begin{equation}
\begin{aligned}
w_1^{\ot \gamma(w_1)}\ot\cdots\ot w_r^{\ot \gamma(w_r)}
&= \bigsot_{j=1}^t \big(\bigsot_{k=1}^{a_j}
(u_j^{\ot p^{i_{j,k}}})^{\ot \gamma(u_j^{\ot p^{i_{j,k}}})}\big)
\\
&= \bigsot_{j=1}^t
u_j^{\ot (\sum_{k=1}^{a_j} p^{i_{j,k}} \gamma(u_j^{\ot p^{i_{j,k}}}))}
\\
&=\bigsot_{j=1}^t
u_j^{\ot (\sum_{\ell=0}^{\infty}  p^{\ell}\gamma(u_j^{\ot p^{\ell}}))}.
\end{aligned}
\mlabel{eq:uwt}
\end{equation}
Then by Eq.~(\mref{eq:uws}),
{\allowdisplaybreaks
\begin{eqnarray}
w_\gamma
&=& \bigshprl_{j=1}^{t\ } ( N_{\gamma,u_j} u_j^{\otimes(
\sum_{\ell=0}^{\infty} p^\ell\gamma(u_j^{\otimes p^\ell}) )}+ \text{ lower
L-order terms} ) \qquad (\text{by Lemma~\ref{lem:lead}. (\mref{it:lead3})}) \notag\\
& =&N_\gamma \bigsot_{j=1}^t u_j^{\otimes( \sum_{\ell=0}^{\infty}
p^\ell \gamma(u_j^{\otimes p^\ell}) )} +\text{ lower L-order terms}  \qquad (\text{by
Lemma~\ref{lem:lead}.
(\mref{it:lead1})})\mlabel{eq:standbase}\\
&=& N_\gamma w_1^{\otimes \gamma(w_1)}\otimes \cdots
\otimes w_r^{\otimes \gamma(w_r)}+\text{ lower L-order terms}
\qquad (\text{by Eq.~(\mref{eq:uwt})}) .
\notag
\end{eqnarray}}
Here $N_{\gamma,u_j}$ is a $p$-adic unit that only
depends on $u_j$ and $\gamma$, and $N_\gamma=\prod_{j=1}^t N_{\gamma,u_j}$. Since all the leading terms are
distinct and the leading coefficients are $p$-adic units, the
displayed elements in $U$ are all distinct.

Now suppose the set $U$ is linearly dependent. Then there is a
linear combination
$$ \sum_{u\in U} a_u u =0$$
such that not all $a_u$ are zero. Among all the $u$'s with nonzero
coefficients, let $u_0$ be the one such that the leading word $w$ of $u_0$ in Eq.~(\mref{eq:standbase}) is the largest. Then $a_{u_0}$ is in fact the coefficient of $w$ when $\sum_{u\in U} a_u u=0$ is expanded by Eq.~(\mref{eq:standbase}). Therefore $u_0=0$, a contradiction.
\end{proof}

\subsection{Mixable shuffle algebras with coefficients in $\FF_p$}
\mlabel{ss:shp}
Let $p$ be a prime and let $\bfk=\FF_p$ in this section. We study the structure of $\sh_{\bfk,\lambda}(S)$ for a semigroup $S$. When $\lambda=0$, this structure is easy to give (Theorem~\mref{thm:psh}). It is more subtle when $\lambda\neq 0$ and we have to distinguish several types of abelian semigroups, such as free semigroups, elementary $p$-groups and $p$-idempotent semigroups. To avoid case by case consideration and repeated arguments, we provide an axiomatic framework in Section~\mref{sss:sg} before stating and proving our main theorem in Section~\mref{sss:wtL}.

\subsubsection{Mixable shuffle algebras of weight $0$}
\mlabel{ss:wt0}
We consider mixable shuffle algebras $\sh_{\FF_p,\lambda}(S)$ of weight $0$, that is, shuffle product algebras. It is defined as long as $S$ is a set.
\begin{defn}
Let $A$ be a $\bfk$-algebra. Let $Y$ be a subset of $A$. Define $$\sym{Y}:=\{\sym{y}\ |\ y\in Y\}$$
to be the set of symbols that is in bijection with $Y$. Define
$$\phi: \bfk[\sym{Y}]\to A,\quad \sym{y}\mapsto y, \quad y\in Y,$$
to be the algebra homomorphism that ``evaluates" $\sym{y}$ to $y$.
\mlabel{de:sym}
\end{defn}

\begin{theorem} Let $S$ be a finite ordered set.
Let $\TL=\rmt(\Lyn(S))$ be as defined in Eq.~(\mref{eq:check}). Let $\sym{\TL}=\{\sym{w}\ |\ w\in \TL\}$ be as defined in Definition~\mref{de:sym}. Then
\begin{equation}
\sh_{\FF_p,0}(S) \cong \FF_p [\sym{\TL}]/\langle\sym{w}^p\ |\ \sym{w}\in \TL\rangle
=\bigot_{\sym{w}\in \TL} \Big(\FF_p[\sym{w}]/\langle \sym{w}^p\rangle\Big).
\mlabel{eq:psh}
\end{equation}
Here $\ideal{Y}$ denotes the ideal generated by $Y$.
\mlabel{thm:psh}
\end{theorem}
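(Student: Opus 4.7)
The plan is to construct the evaluation homomorphism
\[
\phi : \FF_p[\sym{\TL}]\big/\ideal{\sym{w}^p \mid \sym{w}\in\sym{\TL}} \;\longrightarrow\; \sh_{\FF_p,0}(S), \qquad \sym{w}\mapsto w,
\]
of Definition~\mref{de:sym} and show it is a bijection. Well-definedness is the only substantive check: one must verify $w^{\shpr p}=0$ in $\sh_{\FF_p,0}(S)$ for every $w\in\TL$. This follows from the tensor form of the freshman's dream, Lemma~\mref{lem:known}.(\mref{it:41}), specialized to $\lambda=0$: for $w=w_1\ot\cdots\ot w_n$ of length $n\geq 2$ the exponent $(p-1)(n-1)$ is strictly positive and hence $\lambda^{(p-1)(n-1)}=0$, while for $n=1$ the residue $w_1^p$ vanishes because the weight-$0$ construction (recalled after Eq.~(\mref{eq:msh})) treats $\FF_p S$ as an algebra with zero multiplication. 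Consequently $\sym{w}\mapsto w$ descends to the quotient.

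Surjectivity of $\phi$ is then immediate from Proposition~\mref{pp:onto}.(\mref{it:span}) with $\bfk=\FF_p$ and $\lambda=0$: this asserts that $\TL$ generates $\sh_{\FF_p,0}(S)$ as an $\FF_p$-algebra. For injectivity, I would note that the domain has the $\FF_p$-basis $\{1\}\cup\{\sym{w}_1^{n_1}\cdots\sym{w}_r^{n_r}\mid w_1>\cdots>w_r\in\TL,\ 1\leq n_j\leq p-1\}$, and $\phi$ carries this basis bijectively onto the set $U$ of Eq.~(\mref{eq:u}). Since $U$ is linearly independent in $\sh_{\FF_p,0}(S)$ by Proposition~\mref{pp:onto}.(\mref{it:ind}), $\phi$ sends a basis to a linearly independent set and is therefore injective. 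The tensor-product description on the right of Eq.~(\mref{eq:psh}) is the standard decomposition of a quotient of a polynomial ring by pure-power relations in disjoint variables.

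The hard part, such as it is, lies in the length-$1$ subcase of the well-definedness step: there one must invoke the zero-multiplication convention on $\FF_p S$, which is available precisely because $\lambda=0$. All of the structural content is packaged in Proposition~\mref{pp:onto}, so once that appeal is in place the theorem reduces to matching the truncated-polynomial monomial basis with the set $U$. (A minor caveat: Proposition~\mref{pp:onto} is phrased for well-ordered semigroups, but a finite ordered set is automatically well-ordered, and at $\lambda=0$ the shuffle product ignores the multiplication on $S$, so no essential adjustment is needed.)
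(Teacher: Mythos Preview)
Your proof is correct and follows essentially the same route as the paper: surjectivity via Proposition~\mref{pp:onto}.(\mref{it:span}), the kernel containment from $w^{\shpr_0 p}=0$, and injectivity by matching the monomial basis of the truncated polynomial ring with the linearly independent set $U$ from Proposition~\mref{pp:onto}.(\mref{it:ind}). The only cosmetic difference is in justifying $w^{\shpr_0 p}=0$ for length-one words: rather than routing through the freshman's dream and the zero-multiplication convention, the paper (and the remark opening Section~\mref{sec:chp}) uses the direct shuffle identity $x^{\shpr_0 p}=p!\,x^{\ot p}$ for $x\in S$, which is simpler and avoids any interpretive issue.
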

\delete{
\begin{remark}
In order not to introduce too many symbols, we have use the same notation for the subset $\TL$ of $\sh_{\FF_p,0}(S)$ and the set of symbols in bijection with $\TL$. So to be complete
\mlabel{rk:sym}
\end{remark}
}
\begin{proof}
By Proposition~\mref{pp:onto}.(\mref{it:span}),
we have a surjective $\FF_p$-algebra homomorphism
$$ \phi: \FF_p[\sym{\TL}] \to \sh_{\FF_p,0}(S), \sym{w}\mapsto w, w\in \TL.$$
As remarked at the beginning of Section~\mref{sec:chp}, $u^p=p! u^{\ot p}=0$ for any word $u$ in $\sh_{\FF_p,0}(S)$.
Thus $\langle \sym{w}^p\ |\ \sym{w}\in \sym{\TL}\rangle$ is in the kernel of
$\phi$. Note that the set
$$\{1\}\cup \{\sym{w}_1^{n_1} \cdots \sym{w}_r^{n_r} |\ \sym{w}_j\in \sym{\TL}, w_1>\cdots>w_r, 1\leq n_j\leq p-1, 1\leq j\leq r, r\geq 1\}
$$
is a $\FF_p$-basis of $\FF_p [\sym{\TL}]/\langle\sym{w}^p\ |\ \sym{w}\in \TL\rangle$ which is mapped onto the subset
$$U=\{1 \}\cup \{w_1^{\shpr n_1}\shpr \cdots \shpr w_r^{\shpr n_r} |\ w_j\in \TL, w_1>\cdots>w_r, 1\leq n_j\leq p-1, 1\leq j\leq r, r\geq 1\}
$$
of $\sh_{\FF_p,0}(S)$ defined in Eq.~(\mref{eq:u}).
Thus to show that $\phi$ is injective and
hence finish the proof of the theorem, we only need to show that $U$ is linearly independent. This is just Proposition~\mref{pp:onto}.(\mref{it:ind}).
\end{proof}

\subsubsection{Two classes of semigroups and their Lyndon words}
\mlabel{sss:sg}

%Fix a prime number $p$.
For an abelian semigroup $\sg$, define
\begin{equation}
\sg_1=\{g\in\sg\ |\  g^p=g\}, \quad S_2=\{g\in \sg\ |\ g^p\neq g\}.
\mlabel{eq:s1}
\end{equation}
Then $\sg=\sg_1\coprod \sg_2$.
We will study $\sh_{\FF_p,\lambda}(S)$ for $S$ in the following two classes of abelian semigroups.

\begin{defn}
{\rm
\begin{enumerate}
\item
Let $\pG$ denote the class of well-ordered abelian semigroups $(\sg,<)$ such that, for any $a,b\in \sg$,
\begin{eqnarray}
&&a>b \Rightarrow a^p>b^p, \text{\rm and}
\mlabel{eq:ple}
\\
&&a^p\geq a.
\mlabel{eq:pleq}
\end{eqnarray}
\item Let $\jG$ denote the class of well-ordered abelian semigroups $(\sg,<)$ such that every element $g\in\sg$ satisfies $g^{p^2}=g^p$
and $g_1<g_2$ for $g_1\in \sg_1$ and $g_2\in \sg_2$.
\end{enumerate}
}
\mlabel{de:class}
\end{defn}

We give some examples to illustrate the wide range of semigroups covered by these two classes. We start with some examples and properties of $\pG$.
\begin{prop}
\begin{enumerate}
\item
$\pG$ contains the class $\iG$ of pairs $(\sg,<)$ consisting of a finite abelian semigroup $\sg$ that is {\bf $p$-idempotent}, that is, $g^{p}=g$ for any element $g$ in the semigroup, and any well order $<$ on $\sg$.
\mlabel{it:ig}
\item
Let $\fG$ be the class of free abelian semigroups $\fg=\fg(X)$
generated by ordered finite sets $X$. For
$(x_1^{n_1},\cdots,x_{|X|}^{n_{|X|}})\in \fg, x_i\in X, n_i\geq 1,
1\leq i\leq |X|,$ define
$\deg(x_1^{n_1},\cdots,x_{|X|}^{n_{|X|}})=\sum_{i=1}^{|X|} n_i.$ For
$y_1, y_2 \in \fg$, define $y_1> y_2$ if $\deg(y_1)>\deg(y_2)$, or
if $\deg(y_1)=\deg(y_2)$ and $y_1$ is larger than $y_2$ according to
the lexicographic order on $\fg$ induced by the order on $X$. Then
$\fG$ is a subclass of $\pG$. \mlabel{it:fg}
\item
The class $\pG$ is closed under the semigroup unitarization that adds an identity $\iota_\pg$ to an ordered semigroup $\pg\in \pG$. The order on $\pg$ is extended to $\pg\cup \{\iota_\pg\}$ by defining $\iota_\pg$ to be the smallest element. In particular,
$\pG$ contains free abelian monoids $M\com(X)$ generated by ordered  finite sets $X$.
\mlabel{it:mg}
\item
The class $\pG$ is closed under taking finite direct products and sub-objects, with the (lexicographic) product order and restricted order, respectively.
\mlabel{it:subp}
\item
The class $\pG$ is closed under taking semigroup direct coproducts with the coproduct order (see the proof for the construction).
\mlabel{it:cg}
\end{enumerate}
\mlabel{pp:pexam}
\end{prop}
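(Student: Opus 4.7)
My plan is to verify each of the five parts by directly checking that the defining conditions \mref{eq:ple} and \mref{eq:pleq}, together with well-ordering, are preserved under the construction in question. The statements split naturally into (\mref{it:ig})--(\mref{it:fg}), which are essentially one-line verifications from the definitions; (\mref{it:mg})--(\mref{it:subp}), which are straightforward order-theoretic checks; and (\mref{it:cg}), which requires first specifying the coproduct and its order before running a similar componentwise verification.

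For (\mref{it:ig}), the $p$-idempotent identity $g^p=g$ makes both conditions automatic: $a>b$ gives $a^p=a>b=b^p$, and $a^p=a\geq a$. For (\mref{it:fg}), I would write $y=x_1^{n_1}\cdots x_k^{n_k}$, so that $y^p=x_1^{pn_1}\cdots x_k^{pn_k}$ and $\deg(y^p)=p\deg(y)$; since $\deg(y)\geq 1$ and $p\geq 2$, this gives $\deg(y^p)>\deg(y)$, hence \mref{eq:pleq}. For \mref{eq:ple}, I would split on whether $y_1>y_2$ comes from a difference in degree (in which case $p\deg(y_1)>p\deg(y_2)$) or from lexicographic comparison at equal degree (in which case the first coordinate at which $y_1$ and $y_2$ differ is still the first coordinate at which $y_1^p$ and $y_2^p$ differ, and the inequality is preserved because exponents are positive). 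Well-ordering of $\fg$ is clear since each fixed-degree stratum is finite and the degrees are indexed by $\ZZ_{>0}$.

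Parts (\mref{it:mg}) and (\mref{it:subp}) are pure order theory built on top of the $p$-th power behavior. Adjoining $\iota_\pg$ as the smallest element keeps the order a well order (any nonempty subset either meets $\pg$, where a minimum is guaranteed, or equals $\{\iota_\pg\}$); comparisons involving $\iota_\pg$ are trivial since $\iota_\pg^p=\iota_\pg$, and comparisons within $\pg$ inherit the two conditions. Applying this to $\fg(X)$ recovers $M\com(X)\in\pG$. For (\mref{it:subp}), the lexicographic product of two well orders is again a well order by the standard two-step minimization (pick the minimum in the first coordinate, then in the second), and both \mref{eq:ple} and \mref{eq:pleq} propagate componentwise because $(a_1,a_2)^p=(a_1^p,a_2^p)$; restriction to a sub-object automatically preserves well-ordering and the two inequalities.

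The only part demanding genuine setup is (\mref{it:cg}), where I must first describe the coproduct of a family $\{\pg_i\}_{i\in I}$ of abelian semigroups with $I$ itself ordered. My intended construction is the set of finitely supported formal tuples $(g_{i_1},\ldots,g_{i_k})$ with $i_1<\cdots<i_k$ and $g_{i_j}\in\pg_{i_j}$, multiplied componentwise while merging coinciding indices via the semigroup operation in the corresponding $\pg_{i_j}$. Ordered first by the largest index appearing in the tuple and then by the order in the corresponding $\pg_i$, with comparisons at equal leading coordinate passing to the tuples obtained by deleting that coordinate, one again obtains a well order, and since the $p$-th power acts componentwise, both \mref{eq:ple} and \mref{eq:pleq} descend from the individual $\pg_i$. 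The main obstacle I anticipate is not the two inequalities themselves, which become automatic once the construction is fixed, but rather choosing the coproduct order so that well-ordering is transparent and compatible with the $p$-th power; a naive componentwise ordering starting from the smallest index would fail to be a well order for infinite coproducts, so some care in the definition is needed.
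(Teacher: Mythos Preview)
Your treatment of (\mref{it:ig})--(\mref{it:subp}) matches the paper's essentially line for line. The genuine divergence is in (\mref{it:cg}). The paper handles only the binary coproduct and does so by a reduction rather than a direct verification: it realizes the coproduct of $\sg$ and $\sg'$ as the sub-semigroup $\{(y,g)\neq(\iota_\sg,\iota_{\sg'})\}$ of the monoid product $(\sg\cup\{\iota_\sg\})\times(\sg'\cup\{\iota_{\sg'}\})$, then invokes (\mref{it:mg}) to put each unitarization in $\pG$ and (\mref{it:subp}) to conclude that the product, and hence the sub-semigroup, lies in $\pG$. This costs essentially nothing once (\mref{it:mg}) and (\mref{it:subp}) are in hand, and the coproduct order is simply the restricted lexicographic product order. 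Your approach instead builds an explicit order on a general (possibly infinite) coproduct, ordering by largest support index and recursing; this is correct and more general, but it requires you to prove well-ordering from scratch and to argue termination of the recursion via the well-ordering of the index set, whereas the paper's embedding trick sidesteps all of that. Note also that your order and the paper's do not agree even in the binary case (yours compares the second coordinate first), though both satisfy the required conditions.
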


\begin{proof}
(\mref{it:ig}). Both of the two conditions on $\pG$ follow from
the $p$-idempotent condition $g^p=g$.
\smallskip

\noindent
(\mref{it:fg}). Here checking of the two conditions boils down to the facts that, for positive integers $m, n$,
$m>n$ if and only if $pm>pn$, and that $pm>m$.
\smallskip

\noindent
(\mref{it:mg}) Let $\pg\in \pG$ and consider the monoid $\pg\cup \{\iota_\pg\}$. Since elements in $\pg$ already satisfy the two conditions for $\pG$ and there is no $a\in \pg$ with $\iota_\pg>a$, we only need to check that $a>\iota_\pg$ implies $a^p>\iota_\pg^p$ and that $\iota_\pg^p\geq \iota_\pg$, both of which are clear.
\smallskip

\noindent
(\mref{it:subp}) holds since the two conditions on $\pG$ are preserved by taking finite direct products and subsets.
\smallskip

\noindent
(\mref{it:cg}). Let $\sg, {\sg'}\in \pG$.
The coproduct $\cg=\cg(\sg,{\sg'})$ of $\sg$ and ${\sg'}$ is defined by the usual universal property. Explicitly, $C$ is the disjoint union
$$ C=(\sg\times {\sg'})\coprod \sg \coprod {\sg'}.$$
Extending the semigroup $\sg$ (resp. ${\sg'}$) to the monoid $\sg\cup\{\iota_\sg\}$ (resp. ${\sg'}\cup \{\iota_{\sg'}\}$) by adding an identity $\iota_\sg$ (resp. $\iota_{\sg'}$).
Thus we can rewrite $\cg$ as the sub-semigroup
$$\cg=\{(y,g)\in (\sg \cup\{\iota_\sg\})\times ({\sg'}\cup \{\iota_{\sg'}\}) \ |\ (y,g)\neq (\iota_\sg, \iota_{\sg'}) \}$$
of the monoid product $(\sg \cup\{\iota_\sg\})\times ({\sg'}\cup \{\iota_{\sg'}\})$. By Item~(\mref{it:mg}), $\sg \cup\{\iota_\sg\}$ and ${\sg'}\cup \{\iota_{\sg'}\}$ are in $\pG$. Hence by Item~(\mref{it:subp}), $\pG$ contains $(\sg\cup\{\iota_\sg\}) \times ({\sg'} \cup \{\iota_{\sg'}\})$ with the product order, and then contains $C\subseteq (\sg\cup\{\iota_\sg\}) \times ({\sg'} \cup \{\iota_{\sg'}\})$ with the restricted order.
\end{proof}
We next provide some examples and properties of $\jG$.
\begin{prop}
\begin{enumerate}
\item
Let $\iG$ be the class in Proposition~\mref{pp:pexam}.(\mref{it:ig}).  Then $\iG\subseteq \jG$.
\mlabel{it:ig2}
\item
$\jG$ contains the class $\eG$ of pairs $(\sg,<)$ consisting of a
finite abelian group $\sg$ that is an {\bf elementary $p$-group}, that
is, $g^p=e$ for any element in the group. Here $e$ is the
identity and $<$ is any choice of well order on $\sg$ such that $e$ is the smallest element.
\mlabel{it:eg}
\item
The class $\jG$ is closed under taking finite direct products and sub-objects, with the product order and restricted order, respectively.
\mlabel{it:ieg}
\end{enumerate}
\end{prop}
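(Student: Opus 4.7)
My plan is to verify the three parts of the proposition in order, with the substantive work concentrated in the product-closure half of (c). For (a), if $\sg \in \iG$ then $g^p = g$ for every $g \in \sg$, so $g^{p^2} = (g^p)^p = g^p$ for free, and moreover $\sg_1 = \sg$, $\sg_2 = \emptyset$, so the split inequality $g_1 < g_2$ holds vacuously. For (b), when $\sg$ is an elementary $p$-group with identity $e$ we have $g^p = e$ and hence $g^{p^2} = e^p = e = g^p$; the condition $g^p = g$ forces $g = e$, so $\sg_1 = \{e\}$ and $\sg_2 = \sg \setminus \{e\}$, and the split inequality is exactly the hypothesis that $e$ is the smallest element. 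Both parts amount to unwinding the two defining conditions of $\jG$.

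For (c), the sub-object statement is routine: the identity $g^{p^2} = g^p$ is element-wise and so survives restriction; one has $(\sg')_1 = \sg_1 \cap \sg'$ and $(\sg')_2 = \sg_2 \cap \sg'$ by definition, so the inequality $g_1 < g_2$ simply transports from $\sg$ to the restricted order.

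The real content of (c), and the step where I expect to have to argue carefully, is the product case. For $\sg, \sg' \in \jG$ the relation $(g, g')^{p^2} = (g, g')^p$ clearly passes coordinate-wise, and a short check shows $(\sg \times \sg')_1 = \sg_1 \times \sg'_1$ while $(\sg \times \sg')_2 = (\sg \times \sg') \setminus (\sg_1 \times \sg'_1)$. The subtlety, and the main obstacle, is that the naive lexicographic product order need not satisfy the split inequality: for example with $\sg_1 = \{a_1 < a_2\}$, $\sg_2 = \{b\}$, $\sg'_1 = \{c\}$, $\sg'_2 = \{d\}$, the element $(a_2, c) \in (\sg \times \sg')_1$ lex-dominates $(a_1, d) \in (\sg \times \sg')_2$, violating the required inequality. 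To repair this I would endow $\sg \times \sg'$ with a block-refined lexicographic product order: declare every element of $\sg_1 \times \sg'_1$ to precede every element of its complement, and within each of these two blocks use the lex order induced from the given orders on $\sg$ and $\sg'$. Well-foundedness is immediate since each block is itself well-ordered, and the splitting inequality holds by construction. This completes the verification that $\sg \times \sg' \in \jG$ under this appropriately chosen product order.
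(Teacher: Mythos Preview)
Your arguments for (a), (b), and the sub-object half of (c) are correct and match the paper's approach; the paper disposes of these in one line, saying the verifications are ``clear'' and that the defining properties are ``preserved under taking finite direct products and subsets.''

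For the product half of (c) you actually go further than the paper. The paper simply asserts that the defining properties of $\jG$ are preserved under direct products ``with the product order,'' parallel to the treatment of $\pG$ where the product order is explicitly the lexicographic one. You correctly notice that the naive lexicographic order does \emph{not} automatically preserve the splitting inequality $g_1<g_2$ for $g_1\in(\sg\times\sg')_1$, $g_2\in(\sg\times\sg')_2$: your example (take $\sg\in\iG$ with two elements $a_1<a_2$ and $\sg'\in\eG$ with identity $c$ and a nonidentity element $d$) produces $(a_2,c)\in(\sg\times\sg')_1$ lexicographically above $(a_1,d)\in(\sg\times\sg')_2$. Your block-refined order---declare $\sg_1\times\sg'_1$ to precede its complement, and use lex within each block---repairs this and is easily seen to be a well order satisfying both conditions in Definition~\ref{de:class}. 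This is a genuine clarification of a point the paper glosses over; your construction shows that the proposition holds once ``product order'' is interpreted appropriately rather than as the plain lexicographic order.
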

We will use the notations $\pG, \jG, \iG, \fG, \cG, \eG$ with
the above meanings in the rest of this paper.

\begin{proof}
The verifications of Items (\mref{it:ig2}) and (\mref{it:eg}) are clear. Item (\mref{it:ieg}) follows since the defining properties of $\jG$ are preserved under taking finite direct products and subsets.
\end{proof}

Let a semigroup $\sg$ be in $\pG$ or $\jG$. For a \word
$w=u_1\ot \cdots \ot u_r\in \sg^{\ot r}\subseteq M\tcon(S)$, denote
\begin{equation}
w^{\comp{p}}=u_1^p\ot \cdots \ot u_r^p. \mlabel{eq:comp}
\end{equation}

\begin{lemma} Let $\sg\in \pG$.
\begin{enumerate}
\item
$a>b \Leftrightarrow a^p>b^p$.
\mlabel{it:iff1}
\item
$a=b \Leftrightarrow a^p=b^p$.
\mlabel{it:iff2}
\item
A \word $w\in M\tcon(\sg)$ is a Lyndon word if and only if $w^{\comp{p}}$ is a Lyndon word.
\mlabel{it:iff3}
\end{enumerate}
\mlabel{lem:iff}
\end{lemma}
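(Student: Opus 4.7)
The plan is to derive (a) and (b) directly from the defining axiom~(\mref{eq:ple}) of the class $\pG$ combined with totality of the order $<$, and then to lift these letter-wise statements to \words so as to obtain (c).

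For (a), the forward implication is exactly axiom~(\mref{eq:ple}). For the converse I would argue by totality on $\sg$: assuming $a^p>b^p$, the alternatives $a=b$ and $a<b$ are both ruled out, the former yielding $a^p=b^p$ and the latter yielding $a^p<b^p$ by the forward implication applied with $a$ and $b$ swapped. Part (b) follows at once: the non-trivial direction says that if $a^p=b^p$ then (a) rules out both $a>b$ and $a<b$, so $a=b$ by totality.

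For (c), the key observation is that (a) and (b) together say that the $p$-th power map $\sg\to\sg$, $a\mapsto a^p$, is strictly order-preserving and injective. Extended letter-by-letter, the induced map $w\mapsto w^{\comp{p}}$ on $M\tcon(\sg)$ preserves length, and for any two \words $u,v$ of the same length one has $u<\lexord v \Leftrightarrow u^{\comp{p}}<\lexord v^{\comp{p}}$, by comparing them at the first position of disagreement and invoking (a) or (b) there. To transfer the Lyndon criterion, I would use that for every non-trivial factorization $w=x\ot v$ the inequality $w<\lexord v$ required for Lyndon-ness cannot be realized by $v$ being a prefix of $w$ (which would instead give $v<\lexord w$); it must therefore be witnessed by an index $i\leq|v|$ at which the first $i-1$ letters of $w$ and of $v$ coincide while the $i$-th letter of $w$ is strictly smaller than that of $v$. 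This data is preserved and reflected by $\comp{p}$ via (a) and (b) applied at position $i$, and the proper suffixes of $w^{\comp{p}}$ are exactly the $v^{\comp{p}}$ for proper suffixes $v$ of $w$. Hence $w$ is Lyndon if and only if $w^{\comp{p}}$ is Lyndon.

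The step most in need of care is the bookkeeping around the correspondence between proper factorizations of $w$ and those of $w^{\comp{p}}$, which is immediate from length-preservation and the letter-wise action of $\comp{p}$; beyond this, (c) reduces cleanly to a position-wise transfer through (a) and (b), and I do not expect any genuine obstacle.
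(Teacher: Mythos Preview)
Your proposal is correct and follows essentially the same approach as the paper: parts (a) and (b) are obtained from axiom~(\mref{eq:ple}) together with trichotomy of the total order, and part (c) is deduced from the fact that $a\mapsto a^p$ is a strictly order-preserving injection. The only difference is presentational: for (c) the paper simply notes that $g\mapsto g^p$ is an order isomorphism from $\sg$ onto its image $\sg'=\{g^p:g\in\sg\}$ and invokes the general principle that Lyndon-ness depends only on the order structure of the alphabet, whereas you unpack this by directly verifying that the lexicographic comparisons $w<\lexord v$ between $w$ and its proper suffixes transfer through $\comp{p}$---but this is the same argument at a finer level of detail.
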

\begin{proof}
(\mref{it:iff1}). Suppose $a^p>b^p$ but $a\leq b$, then either $a<b$ which implies that $a^p<b^p$, or $a=b$ which implies that $a^p=b^p$. Both are contradictions. So $a>b$. The same argument applies to prove (\mref{it:iff2}).
\medskip

\noindent
(\mref{it:iff3}). By Items (\mref{it:iff1}) and (\mref{it:iff2}),
the map
$$F: \sg\to \sg':=\{g^p\ |\ g\in \sg\},\quad g\mapsto g^p, g\in \sg,$$ is an isomorphism of the two ordered sets with the order on $\sg'$ being restricted from $\sg$. Since Lyndon words are determined solely by the orders, an order-preserving set map sends a Lyndon word to a Lyndon word. Then Item~(\mref{it:iff3}) follows.
\end{proof}

For $S\in \pG$ or $\jG$,  $\sg_1$ is a sub-semigroup of $\sg$ and remains in the same class as $\sg$. Define the subset of
{\bf $p$-divisible elements} of $\sg$:
\begin{equation}
\sg_\mdiv: = \bigcap_{r\geq 1} \{u^{p^r}\ |\ u\in \sg\}.
\mlabel{eq:div}
\end{equation}
\begin{lemma}
Let $S$ be in $\pG$.
\begin{enumerate}
\item
$ \sg_\mdiv = S_1.$
\mlabel{it:div}
\item
For $i=1,2$, $g\in \sg_i$ if and only if $g^p\in \sg_i$.
\mlabel{it:disj}
\end{enumerate}
\mlabel{lem:div}
\end{lemma}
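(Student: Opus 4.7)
The plan is to establish part (b) first since it is essentially an immediate application of Lemma~\ref{lem:iff}, and then to use the uniqueness of $p^r$-th roots that comes out of that argument to prove part (a).

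For (b), I would start from the definition $g \in S_1 \iff g^p = g$. One direction is formal: if $g^p = g$, then raising both sides to the $p$-th power gives $(g^p)^p = g^p$, so $g^p \in S_1$. For the converse, suppose $g^p \in S_1$, that is, $(g^p)^p = g^p$, which reads $g^{p^2} = g^p$. By Lemma~\ref{lem:iff}(b), the map $x \mapsto x^p$ is injective on $S$ (since $a^p = b^p$ forces $a = b$), hence $g^p = g$, so $g \in S_1$. The statement for $S_2$ then follows by taking complements, using $S = S_1 \sqcup S_2$.

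For (a), the inclusion $S_1 \subseteq S_{\mdiv}$ is immediate: if $g^p = g$, then $g = g^{p^r}$ for all $r \geq 1$, so $g$ lies in $\{u^{p^r} \mid u \in S\}$ for every $r$. The content is the reverse inclusion $S_{\mdiv} \subseteq S_1$. Fix $g \in S_{\mdiv}$. By Lemma~\ref{lem:iff}(b), the $p$-th power map is injective, so by iteration the $p^r$-th power map is injective for every $r$; thus for each $r \geq 1$ there is a \emph{unique} $u_r \in S$ with $u_r^{p^r} = g$. Uniqueness automatically makes the sequence coherent: $(u_{r+1}^p)^{p^r} = u_{r+1}^{p^{r+1}} = g = u_r^{p^r}$, so $u_{r+1}^p = u_r$. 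Now by Eq.~(\ref{eq:pleq}), $u_r = u_{r+1}^p \geq u_{r+1}$, so $u_1 \geq u_2 \geq u_3 \geq \cdots$ is a weakly descending chain in $S$.

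The main (though modest) obstacle is extracting a contradiction or stabilization from this chain. Since $(S,<)$ is well-ordered, any strictly descending chain would have no smallest element, contradicting well-orderedness; hence the chain must stabilize at some index $N$, i.e., $u_N = u_{N+1} = u_{N+2} = \cdots$. Combined with $u_{N+1}^p = u_N$, this gives $u_N^p = u_N$, so $u_N \in S_1$. Finally, $u_N^p = u_N$ implies $u_N^{p^r} = u_N$ for all $r \geq 0$, and in particular $g = u_N^{p^N} = u_N \in S_1$. This completes $S_{\mdiv} \subseteq S_1$ and hence the lemma.
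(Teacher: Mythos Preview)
Your proof is correct. Part~(\ref{it:disj}) is essentially identical to the paper's argument, which also reduces to the chain $g^p=g \Leftrightarrow g^{p^2}=g^p$ via the injectivity of $x\mapsto x^p$ from Lemma~\ref{lem:iff}(\ref{it:iff2}).

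For part~(\ref{it:div}) you take a genuinely different route. The paper argues by contradiction: it picks a minimal element $w_0$ of $S_\mdiv\setminus S_1$, shows that its $p$-th root $u_1$ again lies in $S_\mdiv\setminus S_1$, and then uses $w_0=u_1^p\geq u_1$ together with minimality to force $w_0=u_1$, a contradiction. You instead build the entire coherent tower of $p$-th roots $u_1\geq u_2\geq\cdots$ above a given $g\in S_\mdiv$ and invoke well-orderedness as a descending chain condition to force stabilization $u_N=u_{N+1}$, whence $u_N\in S_1$ and $g=u_N$. Both arguments use exactly the same ingredients (injectivity of the $p$-power map, Eq.~(\ref{eq:pleq}), and the well-order on $S$), but yours is more direct: it avoids the auxiliary step of checking $u_1\in S_\mdiv\setminus S_1$ and produces the element of $S_1$ explicitly rather than through a reductio. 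The paper's minimal-counterexample framing, on the other hand, makes it slightly more transparent that only a single application of Eq.~(\ref{eq:pleq}) is really needed.
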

\begin{proof}
(\mref{it:div}).
Since clearly $S_\mdiv \supseteq S_1$, it remains to show that
$ S_\mdiv \backslash S_1$
is empty. Suppose not, then since $\sg$ is a well-ordered set,
$S_\mdiv \backslash S_1$ has a minimal element, denoted by $w_0$.
Then $w_0\neq w_0^p$ but $w_0=u^p$ for some $u\in S$.
Since $w_0$ is in $S_\mdiv$, there is a $u_r$ for each $r\geq 1$ such that $w_0=u_r^{p^r}.$ Then we have $u_1^p=w_0=(u_r^{p^{r-1}})^p$ for $r\geq 2$. By Lemma~\mref{lem:iff}.(\mref{it:iff2}), we get
$u_1=u_r^{p^{r-1}}, r\geq 2.$ Thus $u_1$ is in $S_\mdiv$. Suppose $u_1$ is in $S_1$. Then $u_1=u_1^p=w_0$. Then $w_0^p=u_1^p=u_1=w_0$, yielding a contradiction. Therefore, $u_1\in S_\mdiv\backslash S_1$.
By the minimality of $w_0$,
we must have $w_0\leq u_1$. By Eq.~(\mref{eq:pleq}), $w_0=u_1^p\geq u_1$. Thus
$w_0=u_1$, that is, $w_0=u_1^p=w_0^p$, again a contradiction. \medskip

\noindent
(\mref{it:disj}).
By Lemma~\mref{lem:iff}.(\mref{it:iff2}),
$$ g\in \sg_1 \Leftrightarrow g^p=g \Leftrightarrow g^{p^2}=g^p
\Leftrightarrow g^p\in \sg_1.$$
Then the claim for $\sg_2$ follows since $S_1$ and $S_2$ are disjoint.
\end{proof}

We define the following operators on subsets $W\subseteq M\tcon(S).$
\begin{equation}
\begin{aligned}
W_1&=\{ w\in W\ |\ w^{\comp{p}}=w\}, \\
W_2&=\{ w\in W\ |\ w^{\comp{p}}\neq w\},\\
\rme(W)&=\{ w\in W\ |\ \text{ either } w= w^{\comp{p}} \text{ or } w \neq u^{\comp{p}} \text{ for any }u\in M\tcon(S)\}.
\end{aligned}
\mlabel{eq:12}
\end{equation} Clearly $W=W_1\coprod W_2$.
Recall from Eq.~(\mref{eq:check}) that we have also defined the operator
$$\rmt(W)=\{ w^{\ot{p^i}}\ |\ i\in \ZZ_{\geq 0}, w\in W\}.$$

The following lemma shows that the four operators $W\mapsto W_1, W\mapsto W_2, W\mapsto \rme(W)$ and $W\mapsto \rmt(W)$ all commute with one another.

\begin{lemma} Let $W$ be any subset of $M\tcon(S)$.
{\allowdisplaybreaks
\begin{eqnarray}
&\rme ( \rmt (W)) = \rmt ( \rme (W)), \quad
\rmt(W_i)=\rmt(W)_i, \quad i=1,2. \mlabel{eq:comm}\\
& \rme(W_1)=W_1=\rme(W)_1, \quad \rme(W_2)=\rme(W)_2.
 \mlabel{eq:rmet}
\end{eqnarray}
}
\mlabel{lem:op1}
\end{lemma}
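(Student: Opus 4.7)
The plan is to work entirely from the definitions, anchored on one simple observation about how $\comp{p}$ interacts with $\ot p^i$. For any word $w = w_1\ot\cdots\ot w_r \in M\tcon(S)$ and any $i\geq 0$, the tensor power $v := w^{\ot p^i}$ is obtained by concatenating $p^i$ copies of the letter-sequence $w_1,\ldots,w_r$, so applying $\comp{p}$ entrywise gives
\[
v^{\comp{p}} = (w^{\comp{p}})^{\ot p^i}.
\]
Comparing letter by letter therefore yields $v = v^{\comp{p}}$ if and only if $w_j = w_j^p$ for each $j$, i.e.\ if and only if $w = w^{\comp{p}}$. This single equivalence drives all four claimed identities.

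The three easier identities then drop out almost immediately. The equivalence above is precisely the statement $w^{\ot p^i} \in \rmt(W)_1 \Leftrightarrow w \in W_1$, from which $\rmt(W_i) = \rmt(W)_i$ for $i=1,2$ follows. For the $\rme$-identities I would first observe that every $w\in W_1$ satisfies $w=w^{\comp{p}}$, so the first clause of the definition of $\rme$ places it in $\rme(W)$; hence $W_1\subseteq \rme(W)$, which gives both $\rme(W_1)=W_1$ (the other inclusion is trivial) and $W_1 = \rme(W)_1$ after intersecting with the fixed-set condition. Finally, $\rme(W_2)=\rme(W)_2$ is a short set-theoretic chase: on $W_2$ the defining disjunction of $\rme$ collapses to ``$w\neq u^{\comp{p}}$ for every $u\in M\tcon(S)$,'' and this condition depends only on $w$ and not on whether one takes $W$ or $W_2$ as the ambient set.

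The substantive content is $\rme(\rmt(W)) = \rmt(\rme(W))$, and both inclusions reduce to the same matching-of-tensor-factors argument. Given $v = w^{\ot p^i}$ with $w\in W$, I would split on the two clauses defining $\rme$. If $w = w^{\comp{p}}$, the opening observation forces $v = v^{\comp{p}}$, and conversely $v = v^{\comp{p}}$ forces $w = w^{\comp{p}}$, matching the two sides. The remaining case is the main calculation: assume $v = u^{\comp{p}}$ for some $u = u_1\ot\cdots\ot u_s \in M\tcon(S)$. Since $v$ has length $rp^i$ where $r=\leng(w)$, we get $s = rp^i$; reading off letters in positions $1,\ldots,r$ shows $u_j^p = w_j$ for $j=1,\ldots,r$, so that $w = (u_1\ot\cdots\ot u_r)^{\comp{p}}$. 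Hence ``$w$ is some $u'^{\comp{p}}$'' and ``$v$ is some $u^{\comp{p}}$'' are equivalent, which translates exactly into the equivalence of the second clause of $\rme$ on the two sides. The only obstacle is the bookkeeping of matching tensor factors, which is clean because $w^{\ot p^i}$ begins with an honest copy of $w$, so the first $r$ letters of $u^{\comp{p}}$ recover $w$ itself as a $\comp{p}$-image.
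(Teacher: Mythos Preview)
Your proposal is correct and follows essentially the same approach as the paper: both arguments are direct verifications from the definitions, hinging on the observation that the $\rme$-condition on a word is intrinsic (independent of the ambient subset $W$) and is preserved under $w\mapsto w^{\ot p^i}$. The paper is much terser---it dispatches $\rme(\rmt(W))=\rmt(\rme(W))$ and $\rmt(W_i)=\rmt(W)_i$ with the single phrase ``follows easily from the definitions,'' and handles $\rme(W_2)=\rme(W)_2$ via the set-complement chain $\rme(W\setminus W_1)=\rme(W)\setminus\rme(W_1)=\rme(W)\setminus\rme(W)_1$---whereas you unpack the letter-matching explicitly; but the underlying content is the same. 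One small point: you only write out the direction ``$v=u^{\comp{p}}\Rightarrow w=u'^{\comp{p}}$'' in detail, but the converse is immediate from your opening identity $v^{\comp{p}}=(w^{\comp{p}})^{\ot p^i}$, so the equivalence is indeed established.
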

\begin{proof}
Eq.~(\mref{eq:comm}) follows easily from the definitions.

For Eq.(\mref{eq:rmet}), $\rme(W_1)=W_1$ follows from the definitions.
Then $$W_1=(W_1)_1=\rme(W_1)_1 \subseteq \rme(W)_1 \subseteq W_1.$$
Thus
$\rme(W_2)=\rme(W\backslash W_1)=\rme(W)\backslash \rme(W_1)=
\rme(W)\backslash \rme(W)_1=\rme(W)_2.$
\end{proof}

For notational convenience, we will skip the parentheses in the operators and denote
\begin{eqnarray*}
&\rme W=\rme(W), \quad  \rmt W=\rmt(W), \quad  \rmt \rme
W=\rmt(\rme(W)), \\
&\rme W_i=\rme (W_i),  \quad \rmt W_i=\rmt (W_i), \quad  \text{TE}W_i=\rmt (\rme (W_i)),  \quad i=1,2.
\end{eqnarray*}
In particular, for $\rml=\Lyn(S)$,
\begin{eqnarray}
&\EL =\rme(\rml), \quad \TL=\rmt(\rml), \quad \TEL=\rmt(\rme(\rml)),\notag \\
& \EL_i=\rme (\rml_i), \quad \TL_i=\rmt(\rml_i), \mlabel{eq:esl12}\\
&\TEL_i  =\rmt (\rme (\rml_i))=\{w=u^{\otimes p^r}\ |\  u\in\EL_i,
r\in\mathbb{Z}_{\geq 0}\}, \quad  i=1,2.  \notag
\end{eqnarray}
By Lemma~\mref{lem:op1}, there is no ambiguity in these notations, since, for example,
$$
\TEL_1=\rmt(\rme(\rml_1))=\rmt(\rme(\rml)_1)=\rmt(\rme(\rml))_1
=\rme(\rmt(\rml_1)).
$$
When $S$ is the free abelian semigroup with one generator, our $\TL$ and $\TEL$ agree with the sets $SL$ and $ESL$ defined in~\mcite{Ha}.

\begin{lemma} Let $\sg$ be in $\pG$. Let $\rml=\Lyn(S)$ be the set of Lyndon words. Then we have
{\allowdisplaybreaks
\begin{eqnarray}
\TL_1&=&\TEL_1, \mlabel{eq:esl1}\\
\rml_1 &= &
    \Lyn(\sg_1).
\mlabel{eq:l1i}
\end{eqnarray}
}
\mlabel{lem:op}
\end{lemma}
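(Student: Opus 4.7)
For the first identity $\TL_1=\TEL_1$, my plan is to apply Lemma~\mref{lem:op1} directly. By the definitions given in Eq.~(\mref{eq:esl12}), $\TL_1=\rmt(\rml_1)$ and $\TEL_1=\rmt(\rme(\rml_1))$. But Eq.~(\mref{eq:rmet}) of Lemma~\mref{lem:op1} asserts $\rme(W_1)=W_1$ for any subset $W\subseteq M\tcon(\sg)$, so in particular $\rme(\rml_1)=\rml_1$, and the two sides are equal. This first step is pure bookkeeping and uses none of the ordering hypotheses of $\pG$.

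For the second identity $\rml_1=\Lyn(\sg_1)$, I would first note that $\sg_1$ is a sub-semigroup of $\sg$, so that $\Lyn(\sg_1)$ is meaningful: in the abelian setting, $a^p=a$ and $b^p=b$ imply $(ab)^p=a^pb^p=ab$, and the well-order on $\sg$ restricts to a well-order on $\sg_1$. Next I would unpack the definition of $W_1$ from Eq.~(\mref{eq:12}): a word $w=u_1\ot\cdots\ot u_r\in M\tcon(\sg)$ satisfies $w^{\comp{p}}=w$ precisely when $u_i^p=u_i$ for every $i$, i.e.\ precisely when every letter of $w$ lies in $\sg_1$. Hence $\rml_1=\Lyn(\sg)\cap M\tcon(\sg_1)$.

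The only remaining point is that this intersection equals $\Lyn(\sg_1)$. For this, I would observe that the Lyndon property of a word $w$ only involves lexicographic comparisons of $w$ with its proper right factors $v$, and such $v$ lie in $M\tcon(\sg_1)$ whenever $w$ does. Since the lex order on $M\tcon(\sg_1)$ is the restriction of that on $M\tcon(\sg)$, a word $w\in M\tcon(\sg_1)$ is Lyndon in the big monoid if and only if it is Lyndon in the small one, giving $\Lyn(\sg)\cap M\tcon(\sg_1)=\Lyn(\sg_1)$. I do not anticipate any real obstacle; the content is the easy observation that $W\mapsto W_1$ picks out exactly those words with all letters in $\sg_1$ and that the Lyndon property respects sub-alphabets.
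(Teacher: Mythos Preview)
Your proposal is correct and follows essentially the same approach as the paper's own proof. For Eq.~(\mref{eq:esl1}) you invoke Eq.~(\mref{eq:rmet}) exactly as the paper does, and for Eq.~(\mref{eq:l1i}) the paper runs the same chain of equivalences (a Lyndon word $w$ satisfies $w^{\comp{p}}=w$ iff each letter lies in $\sg_1$ iff $w\in\Lyn(\sg_1)$); you are merely more explicit than the paper about why the Lyndon property is inherited by sub-alphabets, which is the implicit content of the paper's last $\Leftrightarrow$.
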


\begin{proof}
By Eq.~(\mref{eq:rmet}) we have $\rml_1= \EL_1$. So applying the operator $\rmt$, we have
$\TL_1=\TEL_1$.

For Eq.~(\mref{eq:l1i}), let $w=w_1\ot \cdots \ot w_r\in S^{\ot r}$ be a Lyndon word. Then
\begin{eqnarray*}
&(w\in \rml_1) \Leftrightarrow (w^{\comp{p}}=w)
\Leftrightarrow (w_1^p \ot \cdots \ot w_r^p=w_1\ot \cdots \ot w_r) \\
&\Leftrightarrow (w_i^p =w_i, 1\leq i\leq r)
\Leftrightarrow (w_i \in S_1, 1\leq i\leq r)
\Leftrightarrow (w\in \Lyn(S_1)).
\end{eqnarray*}
\end{proof}

\begin{lemma} Let $\sg\in \pG$.
Then $\TL_2=\{ u^{\comp{p^i}}\ |\ u\in \TEL_2, i\geq 0\}$. Further, all the displayed elements are distinct.
\mlabel{lem:sl}
\end{lemma}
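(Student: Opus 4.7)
The plan is to first establish an analogous bijection at the level of single Lyndon words---namely, that every $u\in\rml_2$ factors uniquely as $u=v^{\comp{p^j}}$ with $v\in\EL_2$ and $j\geq 0$---and then lift it through tensor powers. As a normalization, by Lemma~\mref{lem:op1} (Eqs.~(\mref{eq:comm}) and (\mref{eq:rmet})) we have $\TL_2=\{u^{\ot p^i}\ |\ u\in\rml_2,\ i\geq 0\}$ and $\TEL_2=\{v^{\ot p^k}\ |\ v\in\EL_2,\ k\geq 0\}$. Moreover, raising each letter to its $p$-th power commutes with taking tensor powers, so $(v^{\ot p^k})^{\comp{p^j}}=(v^{\comp{p^j}})^{\ot p^k}$.

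Next I would prove the single-Lyndon-word bijection. For $u=u_1\ot\cdots\ot u_r\in\rml_2$, define its depth $d(u)$ to be the largest $j\geq 0$ for which $u=w^{\comp{p^j}}$ for some $w\in M\tcon(\sg)$; equivalently, $d(u)=\min_i d(u_i)$ with $d(s)=\max\{j\ |\ s=t^{p^j}\text{ for some }t\in\sg\}$. The key point, and the main obstacle, is finiteness of $d(u)$. By Lemma~\mref{lem:op} (Eq.~(\mref{eq:l1i})), $\rml_1=\Lyn(\sg_1)$, so $u\in\rml_2$ forces at least one letter $u_{i_0}$ to lie in $\sg_2=\sg\setminus\sg_1$; by Lemma~\mref{lem:div}.(\mref{it:div}) we have $\sg_1=\sg_\mdiv$, hence $u_{i_0}\notin\sg_\mdiv$, whence $d(u_{i_0})<\infty$ and therefore $d(u)<\infty$. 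Setting $j:=d(u)$ and solving $u=v^{\comp{p^j}}$ (with $v$ uniquely determined letter-by-letter via Lemma~\mref{lem:iff}.(\mref{it:iff2})), one checks: $v$ is Lyndon by iterating Lemma~\mref{lem:iff}.(\mref{it:iff3}); $v\in\rml_2$, because $v^{\comp p}=v$ would give $u=v\in\rml_1$; and $v\in\rme(\rml)$ by the maximality of $j$. Hence $v\in\EL_2$. For uniqueness of the pair $(v,j)$, if $u=v'^{\comp{p^{j'}}}$ with $v'\in\EL_2$ and $j'>j$, then cancelling $p^j$ letter-$p$-th-powers via Lemma~\mref{lem:iff}.(\mref{it:iff2}) yields $v=v'^{\comp{p^{j'-j}}}$, contradicting $v\notin\mathrm{image}(\comp{p})$; the case $j=j'$ gives $v=v'$ directly.

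Finally, I would transfer this to tensor powers. Given $u\in\TL_2$, write $u=y^{\ot p^i}$ with $y\in\rml_2$ and $i\geq 0$, and factor $y=w^{\comp{p^j}}$ uniquely with $w\in\EL_2$; then $u=(w^{\ot p^i})^{\comp{p^j}}$ with $w^{\ot p^i}\in\TEL_2$, yielding the displayed description. For distinctness, suppose $(w^{\ot p^i})^{\comp{p^j}}=(w'^{\ot p^{i'}})^{\comp{p^{j'}}}$ with $w,w'\in\EL_2$. Both sides are tensor powers of the Lyndon words $w^{\comp{p^j}}$ and $w'^{\comp{p^{j'}}}$ respectively (Lemma~\mref{lem:iff}.(\mref{it:iff3})), so Chen-Fox-Lyndon uniqueness (Theorem~\mref{lem:known}.(\mref{it:cfl})) forces $p^i=p^{i'}$ and $w^{\comp{p^j}}=w'^{\comp{p^{j'}}}$; the single-Lyndon-word bijection then gives $w=w'$ and $j=j'$, hence $w^{\ot p^i}=w'^{\ot p^{i'}}$, completing the proof.
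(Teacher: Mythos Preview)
Your proof is correct and rests on the same core facts the paper uses (Lemma~\mref{lem:iff} for injectivity of $\comp{p}$ and Lemma~\mref{lem:div} for $\sg_\mdiv=\sg_1$), but your organization is genuinely different. You first establish a clean bijection at the level of single Lyndon words, $\rml_2\leftrightarrow\EL_2\times\ZZ_{\geq 0}$ via $u\mapsto(v,d(u))$, and then lift it through tensor powers, invoking Chen--Fox--Lyndon uniqueness to separate the $\ot p^i$ and the $\comp{p^j}$ exponents in the distinctness argument. The paper instead works directly at the $\TEL_2$ level: for distinctness it takes $u,v\in\TEL_2$ with $u^{\comp{p^i}}=v^{\comp{p^j}}$, peels off $\comp{p^{\min(i,j)}}$ letterwise via Lemma~\mref{lem:iff}.(\mref{it:iff2}), and then uses that $\TEL_2=\rme(\TL_2)$ (Eq.~(\mref{eq:comm})) to conclude $v$ is not in the image of $\comp{p}$, forcing the exponents to coincide---no appeal to Chen--Fox--Lyndon is needed. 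Your route is more structured and makes the two independent ``axes'' ($\ot p^i$ versus $\comp{p^j}$) explicit, at the cost of an extra factorization step and the CFL machinery; the paper's route is shorter and avoids unpacking $\TEL_2$ into its $\EL_2$-component and tensor exponent.
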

\begin{proof}
Note that for any $u^{\comp{p^i}}$ in the set of the right hand side, $u=w^{\ot p^j}$ for some $w\in \EL_2$.
Since $(w^{\ot p^j})^{\comp{p^i}}=(w^{\comp{p^i}})^{\ot p^j}$,
and $w^{\comp{p^i}}$ is also in $\Lyn_2$ by Lemma~\mref{lem:div}.(\mref{it:disj}), we have
$(w^{\comp{p^i}})^{\ot p^j}\in \TL_2$. This proves
$\supseteq.$

Conversely, let $v^{\ot p^j}\in \TL_2$ with $v\in \Lyn_2$.
Then a tensor factor of $v$ is in $\sg_2$, and hence is not in $S_\mdiv$ by Lemma~\mref{lem:div}.(\mref{it:div}). This means $v=w^{\comp{p^i}}$ for some $w\in \EL_2$.
This shows that $v^{\ot p^j}=(w^{\comp{p^i}})^{\ot p^j}=(w^{\ot p^j})^{\comp{p^i}}$ is in $\{u^{\comp{p^i}}\ |\ u\in \TEL_2, i\geq 0\}.$

Suppose there are $u,v\in \TEL_2$ and $i,j\geq 0$ such that
$u^{\comp{p^i}}=v^{\comp{p^j}}$. Without loss of generality, we can take $i\geq j$. Then
$(u^{\comp{p^{i-j}}})^{\comp{p^j}}=v^{\comp{p^j}}$. By Lemma~\mref{lem:iff}.(\mref{it:iff2}), $u^{\comp{p^{i-j}}}=v$.
Since $v\in \TEL_2$, we have $v\neq v^{\comp{p}}$. Since  $v\in \TEL_2=\rme (\TL_2)$ by Eq.~(\mref{eq:comm}), from the definition of the operator $\rme$ in Eq.~(\mref{eq:12}), we have $v\neq w^{\comp{p}}$ for any word $w$.
So from $u^{\comp{p^{i-j}}}=v$ we obtain $i-j=0$ and then $u=v$.
\end{proof}

For $\sg\in \jG$, define
\begin{equation}
\begin{aligned}
\EETL_2:=\EETL_2(\sg)&=\{w-w^{\comp{p}}|\ w \in \TL_2\},
\\
\EETL:=\EETL(\sg)&= \TL_1(\sg)\cup \EETL_2(\sg).
\end{aligned}
\mlabel{eq:esl}
\end{equation}

\subsubsection{Mixable shuffle algebras of nonzero weight}
\mlabel{sss:wtL}
We now consider a mixable shuffle algebra $\sh_{\FF_p,\lambda}(S)$ on a semigroup $S$ when $\lambda \neq 0$.
\begin{lemma}
Let $S\in \jG$ and let $\lambda \in \FF_p$ be non-zero. For any \word $w\in \sh_{\FF_p,\lambda}(S)$,
\begin{equation}
(w-w^{\comp{p}})^{\shprl p}=0.
\mlabel{eq:shp}
\end{equation}
\mlabel{lem:shp}
\end{lemma}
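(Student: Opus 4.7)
The plan is to combine three ingredients: the tensor-form of the freshman's dream from Theorem~\ref{lem:known}.(b), the usual Frobenius (additive freshman's dream) in a commutative $\FF_p$-algebra, and the defining relation $g^{p^2}=g^p$ of the class $\jG$.

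First I would compute $w^{\shprl p}$ in $\sh_{\FF_p,\lambda}(S)$. Writing $w=w_1\ot\cdots\ot w_n$, Theorem~\ref{lem:known}.(b) gives
\[
w^{\shprl p}\equiv \lambda^{(p-1)(n-1)}\,w_1^{p}\ot\cdots\ot w_n^{p}=\lambda^{(p-1)(n-1)}\,w^{\comp p}\pmod p,
\]
which is an honest equality in the $\FF_p$-algebra $\sh_{\FF_p,\lambda}(S)$. Since $\lambda\in\FF_p^{\times}$, Fermat's little theorem gives $\lambda^{p-1}=1$ in $\FF_p$, so $\lambda^{(p-1)(n-1)}=1$, and hence
\[
w^{\shprl p}=w^{\comp p}.
\]
Because $w^{\comp p}=w_1^{p}\ot\cdots\ot w_n^{p}$ is itself a word in $M\tcon(S)$, I can apply the same formula to $w^{\comp p}$ to get $(w^{\comp p})^{\shprl p}=(w^{\comp p})^{\comp p}=w^{\comp{p^2}}$.

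Next I invoke the defining property of $\jG$: every $g\in S$ satisfies $g^{p^2}=g^{p}$. Applied tensor-factor by tensor-factor, this yields $w^{\comp{p^2}}=w^{\comp p}$, hence $(w^{\comp p})^{\shprl p}=w^{\comp p}$ as well.

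Finally, since $\sh_{\FF_p,\lambda}(S)$ is a commutative $\FF_p$-algebra, the Frobenius map $x\mapsto x^{\shprl p}$ is additive; in particular $(a-b)^{\shprl p}=a^{\shprl p}-b^{\shprl p}$. Applying this with $a=w$ and $b=w^{\comp p}$ and substituting the two identities above gives
\[
(w-w^{\comp p})^{\shprl p}=w^{\shprl p}-(w^{\comp p})^{\shprl p}=w^{\comp p}-w^{\comp p}=0,
\]
which is the claim. No serious obstacle is anticipated: the only point that requires the hypothesis $S\in\jG$ (as opposed to merely $\pG$) is the collapse $w^{\comp{p^2}}=w^{\comp p}$, and the only point requiring $\lambda\neq 0$ is the removal of the factor $\lambda^{(p-1)(n-1)}$ via Fermat.
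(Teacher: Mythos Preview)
Your proof is correct and follows essentially the same route as the paper: expand $(w-w^{\comp{p}})^{\shprl p}$ by the additive Frobenius, apply the tensor freshman's dream to each term, and then use the defining relation $g^{p^2}=g^p$ of $\jG$ to collapse $w^{\comp{p^2}}$ to $w^{\comp{p}}$. The only cosmetic difference is that you invoke Fermat's little theorem to kill the factor $\lambda^{(p-1)(n-1)}$ outright, whereas the paper simply carries the identical factor $\lambda^{(\leng(w)-1)(p-1)}$ on both terms (noting $\leng(w^{\comp{p}})=\leng(w)$) and lets them cancel.
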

\begin{proof}
Let $w$ be in $\sh_{\FF_p,\lambda}(S)$. We have
{\allowdisplaybreaks
\begin{eqnarray*}
 (w-w^{\comp{p}})^{\shprl p}&=&w^{\shprl p} -(w^{\comp{p}})^{\shprl p}\\
 &=& \lambda^{(\leng(w)-1)(p-1)}w^{\comp{p}} -\lambda^{(\leng(w^{\comp{p}})-1)(p-1)}(w^{\comp{p}})^{\comp{p}} \qquad \text{(by Eq.~(\mref{eq:41}))}\\
&=&
\lambda^{(\leng(w)-1)(p-1)}w^{\comp{p}} -\lambda^{(\leng(w)-1)(p-1)}w^{\comp{p^2}} \qquad (\leng(w^{\comp{p}})=\leng(w))\\
&=&
\lambda^{(\leng(w)-1)(p-1)}w^{\comp{p}} -\lambda^{(\leng(w)-1)(p-1)}w^{\comp{p}}. \qquad (\text{defining property of } \jG)
\end{eqnarray*}
}
Hence we have the lemma.
\end{proof}

With notations introduced in Eq.~(\mref{eq:esl12})and Eq.~(\mref{eq:esl}), we can state our main theorem on mixable shuffle algebras with weight $\lambda\neq 0$ and with coefficients in $\FF_p$.

\begin{theorem}
Let $0\neq \lambda \in \FF_p$.
%For a pure tensor $w$, denote $\lambda_w = \lambda^{(\leng(w)-1)(p-1)}.$
We will use the notation from Definition~\mref{de:sym}.
\begin{enumerate}
\item For a semigroup $\sg$ in $\pG$, we have
\begin{eqnarray}
\sh_{\FF_p,\lambda}(\sg)&\cong& \FF_p[\TELs]/\langle \sym{w} ^p-\lambdaw \sym{w} \ |\ \sym{w} \in
\TELs_1\rangle \notag \\
&\cong& \FF_p[\TELs_1]/\langle \sym{w} ^p-\lambdaw \sym{w} \ |\ \sym{w} \in \TELs_1\rangle \ot
\FF_p[\TELs_2].
\mlabel{eq:pmsh}
\end{eqnarray}
In particular, for $S\in \fG$,
$$
   \sh_{\FF_p,\lambda}(S) \cong \FF_p[\TELs].
    $$
\mlabel{it:pmsh1}
\item
For $\sg$ in $\jG$, we have
\begin{equation}
\sh_{\FF_p,\lambda}(\sg)\cong \left(\FF_p[\TLs_1]/\langle \sym{w} ^p- \lambdaw \sym{w} \ |\ \sym{w} \in
\TLs_1\rangle\right) \otimes \left(\FF_p[\EETLs_2]/\langle \sym{w} ^p\ |\ \sym{w} \in
\EETLs_2\rangle\right). \mlabel{eq:pmsh2}
\end{equation}
\mlabel{it:pmsh2}
\end{enumerate}
\mlabel{thm:pmsh}
\end{theorem}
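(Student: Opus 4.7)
The plan is to prove both parts via a common template: construct an algebra homomorphism $\phi$ from the candidate polynomial algebra (or polynomial quotient) onto $\sh_{\FF_p,\lambda}(\sg)$, verify that the advertised relations lie in $\ker\phi$, and then establish injectivity of the induced map by exhibiting an explicit $\FF_p$-basis of the quotient whose images are linearly independent via the leading L-order machinery of Proposition~\mref{pp:onto} and Lemma~\mref{lem:lead}. The crucial simplification in characteristic $p$ is the tensor freshman's dream (Theorem~\mref{lem:known}(\mref{it:41})): for $\lambda\in\FF_p^\times$ and any word $w$ of length $n$, $w^{\shprl p}=\lambda^{(p-1)(n-1)}w^{\comp p}=w^{\comp p}$ since $\lambda^{p-1}=1$ in $\FF_p$, and iterating gives $w^{\shprl p^j}=w^{\comp{p^j}}$.

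For part~(\mref{it:pmsh1}), define $\phi:\FF_p[\TELs]\to\sh_{\FF_p,\lambda}(\sg)$ by $\sym{w}\mapsto w$. Surjectivity reduces by Proposition~\mref{pp:onto}(\mref{it:span}) to showing $\TL\subseteq\im\phi$: this is automatic for $\TL_1=\TEL_1$, and for $v\in\TL_2$ Lemma~\mref{lem:sl} writes $v=u^{\comp{p^j}}$ uniquely with $u\in\TEL_2$, $j\ge 0$, so $v=u^{\shprl p^j}=\phi(\sym{u}^{p^j})$. For $\sym{w}\in\TELs_1$, $w\in\TL_1$ gives $w^{\comp p}=w$ and hence $\phi(\sym{w}^p-\sym{w})=0$. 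For injectivity of the induced surjection $\bar\phi$, take the standard basis of the quotient consisting of monomials $\prod_{w\in\TEL}\sym{w}^{n_w}$ with $n_w\in\{0,\ldots,p-1\}$ when $w\in\TEL_1$ and $n_w\in\ZZ_{\geq 0}$ when $w\in\TEL_2$. Expanding $n_w=\sum_k a_{w,k}p^k$ in base $p$ for each $w\in\TEL_2$ and using $w^{\shprl p^k}=w^{\comp{p^k}}$, the image rewrites as $\bigshpr_{v\in\TL}v^{\shprl m_v}$ with $m_v\in\{0,\ldots,p-1\}$; by Lemma~\mref{lem:sl} this reparametrization is a bijection onto the indexing set for the set $U$ of Proposition~\mref{pp:onto}(\mref{it:ind}), so the images traverse $U$, which is linearly independent. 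The tensor factorization in Eq.~(\mref{eq:pmsh}) is automatic since the ideal involves only $\TELs_1$, and for $\sg\in\fG$ no element of a free abelian semigroup (without identity) satisfies $g^p=g$, so $\TEL_1=\emptyset$ and $\sh_{\FF_p,\lambda}(\sg)\cong\FF_p[\TELs]$.

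For part~(\mref{it:pmsh2}), define $\phi:\FF_p[\TLs_1\sqcup\EETLs_2]\to\sh_{\FF_p,\lambda}(\sg)$ by $\sym{w}\mapsto w$ for $w\in\TL_1$ and $\sym{v-v^{\comp p}}\mapsto v-v^{\comp p}$ for $v\in\TL_2$. For surjectivity, decompose $v=(v-v^{\comp p})+v^{\comp p}$ for $v\in\TL_2$: the first summand lies in $\im\phi$ by construction, while $v^{\comp p}$ is a word entirely in $\sg_1$ (since $g^{p^2}=g^p$ in $\jG$ forces $g^p\in\sg_1$), hence sits in the sub-algebra $\sh_{\FF_p,\lambda}(\sg_1)\subseteq\sh_{\FF_p,\lambda}(\sg)$, which by Proposition~\mref{pp:onto}(\mref{it:span}) is generated by $\TL(\sg_1)=\TL_1$, itself in $\im\phi$. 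The relations $\sym{w}^p-\sym{w}$ for $\sym{w}\in\TLs_1$ lie in $\ker\phi$ by the same freshman's-dream calculation as in part~(\mref{it:pmsh1}), while $\sym{w}^p$ for $\sym{w}\in\EETLs_2$ does so by Lemma~\mref{lem:shp}. For injectivity of $\bar\phi$, take the monomial basis of the quotient given by $\prod_{w\in\TL_1}\sym{w}^{n_w}\prod_{v\in\TL_2}\sym{v-v^{\comp p}}^{m_v}$ with all exponents in $\{0,\ldots,p-1\}$. The $\jG$-conditions $\sg_1<\sg_2$ together with $g^p\in\sg_1$ yield $v>v^{\comp p}$ in L-order for every $v\in\TL_2$; consequently in the binomial expansion $(v-v^{\comp p})^{\shprl m_v}=\sum_j\binom{m_v}{j}(-1)^{m_v-j}v^{\shprl j}\shprl(v^{\comp p})^{\shprl(m_v-j)}$, the $j=m_v$ summand dominates and contributes the leading tensor $v^{\ot m_v}$ with $p$-adic-unit coefficient coming from Lemma~\mref{lem:lead}(\mref{it:lead2}). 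Applying Eq.~(\mref{eq:standbase}) to the overall product then produces pairwise distinct leading tensor monomials with nonzero leading coefficients for distinct exponent tuples, completing the linear-independence check.

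The main technical hurdle is the leading-order bookkeeping in part~(\mref{it:pmsh2}): one must confirm that the binomial cancellations inside each $(v-v^{\comp p})^{\shprl m_v}$ do not destroy the leading coefficient (the order axiom $\sg_1<\sg_2$ of $\jG$ is essential here, as it is what forces $v>v^{\comp p}$), and that no accidental coincidence of leading tensor monomials can arise between images of distinct basis elements once the $\TL_1$ and $\TL_2$ factors are interleaved under shuffling.
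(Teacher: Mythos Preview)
Your proposal is correct and follows essentially the same approach as the paper: surjectivity via Proposition~\mref{pp:onto}(\mref{it:span}) together with Lemma~\mref{lem:sl} and the tensor freshman's dream, the relations via Eq.~(\mref{eq:41}) and Lemma~\mref{lem:shp}, and injectivity by matching the monomial basis of the quotient bijectively with the linearly independent set $U$ of Proposition~\mref{pp:onto}(\mref{it:ind}) through base-$p$ expansion of exponents. The only presentational difference is in part~(\mref{it:pmsh2}): rather than your binomial expansion of $(v-v^{\comp p})^{\shprl m_v}$, the paper observes directly that since $w^{\comp p}<\lengord w$ for each $w\in\TL_2$, replacing each $w_i$ by $\bar w_i=w_i-w_i^{\comp p}$ leaves the leading L-order term of $w_1^{\shprl i_1}\shprl\cdots\shprl w_r^{\shprl i_r}$ (as computed in Eq.~(\mref{eq:standbase})) unchanged with the same coefficient, so the modified set $\overline U$ inherits linear independence from $U$ immediately.
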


\begin{coro} Let $X$ be a finite ordered set. Let $S=M\com(X)$ be the free abelian monoid generated by $X$.
%Recall that $\sh_{\FF_p,\lambda}(\FF_p[X])=\sh_{\FF_p,\lambda}(S)$.
Then
\begin{equation}
\sh_{\FF_p,\lambda}(\FF_p[X])\cong \FF_p[\TELs_2]\ot \left (\FF_p[\TELs_1]/\langle
\sym{w} ^p-\sym{w} \ |\ \sym{w} \in \TELs_1\rangle\right ). \mlabel{eq:pmshx}
\end{equation}
\mlabel{co:pmshx}
\end{coro}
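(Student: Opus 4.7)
The plan is to derive Corollary~\ref{co:pmshx} directly from Theorem~\ref{thm:pmsh}.(\ref{it:pmsh1}) applied to the free abelian monoid on $X$. First I would observe that $\FF_p[X]$ is the monoid algebra $\FF_p\,M\com(X)$, so by the notational convention $\sh_{\bfk,\lambda}(S):=\sh_{\bfk,\lambda}(\bfk S)$ introduced in Section~\ref{ss:moq}, there is an identification
\[
\sh_{\FF_p,\lambda}(\FF_p[X]) \;=\; \sh_{\FF_p,\lambda}(M\com(X)).
\]
This reduces the corollary to analyzing the mixable shuffle algebra on the free abelian monoid $S := M\com(X)$.

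Next I would invoke Proposition~\ref{pp:pexam}.(\ref{it:mg}): since $X$ is a finite ordered set, $M\com(X)$ belongs to $\pG$ (this is the closure of $\pG$ under semigroup unitarization applied to the free abelian semigroup $\fg(X)\in \fG\subseteq \pG$, with the identity declared to be the smallest element). The hypotheses of Theorem~\ref{thm:pmsh}.(\ref{it:pmsh1}) are therefore satisfied for $S=M\com(X)$, and that theorem gives
\[
\sh_{\FF_p,\lambda}(M\com(X)) \;\cong\; \FF_p[\TELs_1]/\langle \sym{w}^p-\sym{w}\mid \sym{w}\in\TELs_1\rangle \;\otimes\; \FF_p[\TELs_2].
\]
Commuting the two tensor factors (both are commutative $\FF_p$-algebras) yields the form stated in the corollary.

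No serious obstacle arises, as this is essentially a specialization. The only substantive remark worth recording for the reader is the concrete description of the indexing sets: for $S=M\com(X)$ one has $S_1=\{\iota\}$ (any non-identity monomial $x_1^{a_1}\cdots x_n^{a_n}$ with some $a_i\ge 1$ satisfies $(x_1^{a_1}\cdots x_n^{a_n})^p\ne x_1^{a_1}\cdots x_n^{a_n}$), so by Lemma~\ref{lem:op}.(\ref{eq:l1i}) we have $\Lyn_1=\Lyn(S_1)=\{\iota\}$, hence $\TEL_1=\{\iota^{\otimes p^i}\mid i\ge 0\}$; while $\TEL_2$ consists of the tensor powers $w^{\otimes p^i}$ of Lyndon words $w$ in which at least one tensor factor is a nontrivial monomial. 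This explicit picture is not needed to establish the isomorphism, but clarifies the polynomial versus non-polynomial parts of $\sh_{\FF_p,\lambda}(\FF_p[X])$.
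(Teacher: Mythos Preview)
Your proof is correct and follows essentially the same route as the paper: identify $\sh_{\FF_p,\lambda}(\FF_p[X])$ with $\sh_{\FF_p,\lambda}(M\com(X))$, use Proposition~\ref{pp:pexam}.(\ref{it:mg}) to place $M\com(X)$ in $\pG$, and apply Theorem~\ref{thm:pmsh}.(\ref{it:pmsh1}). Your added remark that $\TEL_1=\{\iota^{\otimes p^i}\mid i\ge 0\}$ is exactly the content of Eq.~(\ref{eq:l1}), which the paper records immediately after stating the corollary.
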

We note that in this case,
\begin{equation}
\TEL_1={\{1^{\ot p^i}\ |\ i\geq 0\}}. \mlabel{eq:l1}
\end{equation}

\begin{proof}
By Proposition~\mref{pp:pexam}.(\mref{it:mg}), $M\com(X)$ is in $\pG$. Since $\sh_{\FF_p,\lambda} (M\com(X))=\sh_{\FF_p,\lambda} (\FF_p[X]),$
the corollary follows from Theorem~\mref{thm:pmsh}.(\mref{it:pmsh1}). \end{proof}

\noindent
{\em Proof of Theorem~\mref{thm:pmsh}. }
(\mref{it:pmsh1}). We first show the surjectivity of the natural
$\FF_p$-algebra homomorphism
$$ \phi: \FF_p[\TELs] \to \sh_{\FF_p,\lambda} (\sg)$$
in Definition~\mref{de:sym} sending $\sym{w}\in \TELs$ to $w\in \TEL$.

Let
$\sh_{\FF_p,\lambda} (\sg)'$ be the image of $\phi$. By Proposition~\mref{pp:onto}, we
only need to show $\TL\subseteq \sh_{\FF_p,\lambda} (\sg)'$. Let $w\in \TL$.
Then either
$w\in \TL_1$ or $w\in \TL_2$. If $w\in \TL_1$, then by Eq.~(\mref{eq:esl1}), $w\in \TEL_1\subseteq \TEL$ and hence is in $\sh_{\FF_p,\lambda} (\sg)'$. If $w\in \TL_2$, then $w=u^{\comp{p^i}}$ for some $u\in \TEL_2$ by Lemma~\mref{lem:sl}.
By Eq.~(\mref{eq:41}),
$$ u^{\shprl p^i}= u^{\comp{p^i}}=
 w.$$
So $w$ is in
$\sh_{\FF_p,\lambda} (\sg)'$ since $u\in \TEL_2\subseteq \sh_{\FF_p,\lambda} (\sg)'$. Thus we have shown the surjectivity of $\phi$.

To prove the injectivity, first note that, by Eq.~(\mref{eq:41}),
$w^{\shprl p}=\lambdaw w$ for $w\in \TEL_1$. So the ideal $\langle
\sym{w}^{ p}-\lambdaw \sym{w}\ |\ \sym{w}\in \TELs_1\rangle$ of $\FF_p[\TELs]$ is
in $\ker(\phi)$.
Let
$$\Sigma=\{\sigma=(\sigma_1,\sigma_2)\: |\: \sigma_1: \TEL_1\rightarrow \{0,\cdots,p-1\},
\sigma_2: \TEL_2\rightarrow \mathbb{Z}_{\geq 0},\text{both with
finite supports}\}.$$
Then
$$ \sym{V}:=\Big \{\sym{z}_{\sigma}:= (\prod_{u\in\TEL_1}\sym{u}^{\sigma_1(u)})(\prod_{v\in\TEL_2} \sym{v}^{
\sigma_2(v)})
 \ |\ \sigma=(\sigma_1,\sigma_2)\in\Sigma \Big \}$$
is a $\FF_p$-basis of $\FF_p[\TELs]/\langle \sym{w}^p-\lambdaw \sym{w}\ |\ \sym{w}\in
\TELs_1\rangle$. Further,
$$
V:= \Big \{z_\sigma: =(\bigshprl_{u\in\TEL_1}u^{\shprl\sigma_1(u)})\shprl(\bigshprl_{v\in\TEL_2} v^{\shprl \sigma_2(v)})\ |\ \sigma=(\sigma_1,\sigma_2)\in\Sigma \Big \}$$
is the image of $\sym{V}$ under $\phi$.
Thus to prove the injectivity of $\phi$ we only need to show that ${V}$ is linearly independent.
For this we relate ${V}$ to the linearly independent subset $U$ defined in Eq.~(\mref{eq:u}).

Let $$\Gamma=\{\gamma: \TL\rightarrow \{0,\cdots, p-1\}\ | \ \gamma \text{ has finite support }\}.$$
Then we have
$$U=\{w_{\gamma}:=\bigshprl_{w\in
\TL}w^{\shprl\gamma(w)}\ |\  \gamma\in\Gamma\}.$$
We will construct a bijection between
$\Sigma$ and $\Gamma$.
First note that $\TL=\TL_1\coprod \TL_2=\TEL_1\coprod \TL_2$ by Eq.~(\mref{eq:esl1}) and
$$\TL_2=\{v^{\comp{p^i}}\ |\ v\in \TEL_2, i\geq 0\}$$
with all displayed elements distinct by Lemma~\mref{lem:sl}. Thus we can define
$$\eta:
\Sigma\rightarrow \Gamma, \quad \sigma\mapsto\gamma_\sigma, \sigma=(\sigma_1,\sigma_2)\in \Sigma$$
by first taking $\gamma_\sigma|_{\TEL_1}=\sigma_1$. Next for any $w
=v^{\comp{p^i}} \in \TL_2$ with $v\in \TEL_2$,  if
$\sigma_2(v)=\sum\limits_{j=0}^{\infty}a_jp^j$ with $a_j\in\{ 0,
\cdots ,p-1\}$, we define $\gamma_\sigma(w)=a_i$.
In the other direction, we define
$$\zeta: \Gamma\rightarrow\Sigma,
\quad \gamma\mapsto \sigma_\gamma=(\sigma_1,\sigma_2)$$
as follows. If $u\in \TEL_1$, then define $\sigma_1(u)=\gamma(u)$. If $v\in \TEL_2$ then $v^{\comp{p^i}}\in\TL_2$ for
all $i\geq 0$ and we define
$$\sigma_2(v)=\sum_{i=0}^{\infty}\gamma(v^{\comp{p^i}})p^i.$$
From the constructions we see that $\eta$ and $\zeta$ are inverse of
each other.

\begin{lemma} We have $V=U$.
More precisely, for any $\sigma\in \Sigma$, we have
${z}_{\sigma}=w_{\eta(\sigma)}.$
\mlabel{lem:uandv} \end{lemma}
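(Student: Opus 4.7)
The plan is to verify the identity $z_\sigma = w_{\eta(\sigma)}$ by a direct computation that factors through the decomposition $\TL = \TEL_1 \coprod \TL_2$ coming from Eq.~(\mref{eq:esl1}) (which identifies $\TL_1 = \TEL_1$). The $\TEL_1$ piece is immediate: by the construction of $\eta$, $\gamma_\sigma|_{\TEL_1} = \sigma_1$, so
$$\bigshprl_{u\in\TEL_1} u^{\shprl \sigma_1(u)} = \bigshprl_{u\in\TEL_1} u^{\shprl \gamma_\sigma(u)}.$$
All of the substance is in matching the $\TEL_2$-indexed product on the left of $z_\sigma$ with the $\TL_2$-indexed product on the right of $w_{\eta(\sigma)}$.

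The key technical step I would isolate first is the identity
$$v^{\shprl p^i} \;=\; v^{\comp{p^i}} \qquad \text{in } \sh_{\FF_p,\lambda}(S), \text{ for every } v\in\TEL_2,\ i\geq 0.$$
To prove it, let $n = \leng(v)$. The tensor form of the freshman's dream, Theorem~\mref{lem:known}.(\mref{it:41}), gives $v^{\shprl p} = \lambda^{(p-1)(n-1)} v^{\comp{p}}$ in $\sh_{\FF_p,\lambda}(S)$. Since $\lambda\in\FF_p^{\times}$, Fermat's little theorem yields $\lambda^{p-1}=1$, whence $v^{\shprl p} = v^{\comp{p}}$. Induction on $i$, using $v^{\shprl p^{i+1}} = (v^{\shprl p^i})^{\shprl p}$ together with $\leng(v^{\comp{p^i}}) = n$, then finishes the claim.

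With this identity available, I would expand $\sigma_2(v) = \sum_{i\geq 0} a_i p^i$ in base $p$ and compute
$$v^{\shprl \sigma_2(v)} \;=\; \bigshprl_{i\geq 0} (v^{\shprl p^i})^{\shprl a_i} \;=\; \bigshprl_{i\geq 0} (v^{\comp{p^i}})^{\shprl a_i}.$$
Taking the $\shprl$-product over $v\in\TEL_2$ and then reindexing through the bijection $\TL_2 \cong \TEL_2 \times \ZZ_{\geq 0}$, $v^{\comp{p^i}}\leftrightarrow (v,i)$, provided by Lemma~\mref{lem:sl}, rewrites this product as $\bigshprl_{w\in\TL_2} w^{\shprl \gamma_\sigma(w)}$, precisely because the definition of $\eta$ sets $\gamma_\sigma(v^{\comp{p^i}}) = a_i$. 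Combining with the $\TEL_1$-part yields $z_\sigma = w_{\eta(\sigma)}$, and the equality $V = U$ follows since $\eta$ is a bijection.

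The only real obstacle is establishing $v^{\shprl p^i} = v^{\comp{p^i}}$; this is where the hypotheses $\bfk=\FF_p$ and $\lambda\neq 0$ are essential (via Fermat's little theorem combined with Eq.~(\mref{eq:41})). Once that identity is secured, the rest is formal bookkeeping through the $\Sigma\leftrightarrow\Gamma$ correspondence defined via base-$p$ expansions.
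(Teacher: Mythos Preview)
Your proposal is correct and follows essentially the same approach as the paper's proof: both reduce to the identity $v^{\shprl p^i}=v^{\comp{p^i}}$ via Eq.~(\mref{eq:41}) and $\lambda^{p-1}=1$, then expand $\sigma_2(v)$ in base $p$ and reindex through the $\TEL_2\times\ZZ_{\geq 0}\cong \TL_2$ bijection underlying $\eta$. Your write-up is in fact slightly more explicit than the paper's (spelling out the induction on $i$ and the appeal to Lemma~\mref{lem:sl}), but the argument is the same.
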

\begin{proof} For any $v\in\TEL_2$, by Eq.~(\mref{eq:41}),
we have $$ v^{\shprl p^j}=\lambda^{j(\ell(v)-1)(p-1)}
v^{\comp{p^j}}=v^{\comp{p^j}}. $$ If $\sigma_2(v)=\sum\limits_{j=0}^{\infty} a_{v,j}
p^j$ with $a_{v,j}\in\{0,\cdots,p-1\}$, then
$$ v^{\shprl\sigma_2(v)}  = \bigshprl_{j\geq 0} (v^{\shprl p^j})^{\shprl a_{v,j}}
 = \bigshprl_{j\geq 0} (v^{\comp{p^j}})^{\shprl a_{v,j}}
$$ and so
{\allowdisplaybreaks
$$\begin{aligned}
{z}_\sigma & =
(\bigshprl_{u\in\TEL_1}u^{\shprl\sigma_1(u)})\shprl(\bigshprl_{v\in\TEL_2}v^{\shprl
\sigma_2(v)}) \\
         & =
         (\bigshprl_{u\in\TEL_1}u^{\shprl\sigma_1(u)}) \shprl(\bigshprl_{v\in\TEL_2}(v^{\comp{
p^j}})^{\shprl a_{v,j}}) \\
         &=  w_{\eta(\sigma)}.
\end{aligned}$$
}
\end{proof}

By Lemma \mref{lem:uandv} and Proposition~\mref{pp:onto}.(\mref{it:ind}), $V$ is linearly independent, as desired.
\medskip

(\mref{it:pmsh2}). Now we consider $S\in \jG$. Define
$$ \phi: \FF_p[\EETLs] \to \sh_{\FF_p,\lambda} (S)$$
to be the natural algebra homomorphism in Definition~\mref{de:sym} with $Y=\EETL$. Again let $ \sh_{\FF_p,\lambda} (S)'$
be the image.

We first prove that $\phi$ is onto. Applying
Proposition~\mref{pp:onto}.(\mref{it:span}) to the semigroup
$\sg_1$ and noting that $\TL_1=\TL(S_1)$ by applying $\rmt$ to Eq.~(\mref{eq:l1i}), we have $\sh_{\FF_p,\lambda} (\sg_1)=\phi(\FF_p[\TLs_1])$
and hence is in $\sh_{\FF_p,\lambda} (S)'$.
Now for any $w\in \TL$, either $w\in\TL_1$ or
$w=\tilde{w}+w^{\comp{p}}$ where $\tilde{w}=w-w^{\comp{p}}\in
\EETL_2\subseteq \sh_{\FF_p,\lambda} (\sg)'$ and $w^{\comp{p}}\in \sh_{\FF_p,\lambda} (S_1)= \phi(\FF_p[\TLs_1])$. Thus
$w\in \sh_{\FF_p,\lambda} (\sg)'$. Then the surjectivity follows from
Proposition~\mref{pp:onto}.(\mref{it:span}).

For $w\in \TL$, define
$$
\bar{w}:=\left\{ \begin{array}{ll} w, & w\in \TL_1,\\
    w-w^{\comp{p}}, & w\in \TL_2.
    \end{array} \right .
$$
$$ \overline{U}:=\{ 1 \}\cup \left \{\bar{w}_1^{\shprl i_1}\shprl \cdots \shprl \bar{w}_r^{\shprl i_r}\ |\
w_i\in \TL, {w}_1> \cdots >{w}_r, 1\leq i_j\leq p-1, 1\leq j\leq r, r\geq 1
    \right \}.
$$
To prove Eq.~(\mref{eq:pmsh2}), we only need to show that $\overline{U}$ is linearly independent.

Recall that the set $U$ in Eq.~(\mref{eq:u}) is just
$$ U=\{\ 1 \}\cup \left \{{w}_1^{\shprl i_1}\shprl \cdots \shprl {w}_r^{\shprl i_r}\ |\ w_i\in \TL, {w}_1>\cdots > {w}_r, 1\leq i_j\leq p-1, 1\leq j\leq r, r\geq 1    \right \}.
$$
By Eq.~(\mref{eq:standbase}), in terms of the linear representation by the standard basis of pure tensors in $\sh_{\FF_p,\lambda} (S)$,
$$ w_1^{\shprl i_1}\shprl \cdots \shprl {w}_r^{\shprl i_r}
=\mu w_1^{\ot i_1}\ot \cdots \ot {w}_r^{\ot i_r}
+ \text{lower \lord-terms }
$$
where $\mu$ is a nonzero constant.
Since $\bar{w_i}=w_i$ when $w_i\in \TL_1$ and
$\bar{w}_i=w_i-w_i^{\comp{p}}$ and $w_i^{\comp{p}}<\lengord w_i$ when
$w\in \TL_2$, we also have
$$
\bar{w}_1^{\shprl i_1}\shprl \cdots \shprl \bar{w}_r^{\shprl i_r}= \mu\,w_1^{\ot i_1}\ot \cdots \ot {w}_r^{\ot i_r}
+ \text{lower \lord-terms }
$$
for the same $\mu$ as in the last equation. It follows that $\overline{U}$ is linearly independent if and only if $U$ is linearly independent which is Proposition~\mref{pp:onto}.(\mref{it:ind}).
\proofend

\subsection{Free Rota-Baxter algebras with coefficients in $\FF_p$}
\mlabel{ss:rbafp}

We can now obtain a structure theorem on free commutative Rota-Baxter
algebras by extracting information from the structure theorem on
mixable shuffle algebras in Theorem~\mref{thm:psh}, Theorem~\mref{thm:pmsh} and Corollary~\mref{co:pmshx}.

\begin{theorem} Let $X$ be a finite ordered set. We will continue to use the $\sym{\ }$-notation in Definition~\mref{de:sym}.
%For a pure tensor $w$, denote $\lambdaw = \lambda^{(\leng(w)-1)(p-1)}.$
\begin{enumerate}
\item
Let $\lambda=0$ and let $S=M\com(X)$ be the commutative monoid generated by $X$. Let $\TL=\rmt(\Lyn(S))$ be defined in Eq.~(\mref{eq:check}). Then
$$
\sha_{\FF_p,\lambda}(\FF_p[X]) \cong \FF_p[X] \ot \left (\FF_p[\TLs]/\langle
\sym{w}^p\ |\ \sym{w}\in \TLs\rangle\right ).
$$\mlabel{it:rbafpx0}
\item
Let $0\neq \lambda\in \FF_p$ and let $S=M\com(X)$.
Let $\TEL_2$ be as defined in Eq.~(\mref{eq:esl12}).
Then
$$\sha_{\FF_p,\lambda}(\FF_p[X]) \cong \FF_p[X\cup \TELs_2] \ot \left (\FF_p[\sym{W}]/\langle
\sym{w}^p-\lambdaw \sym{w}\, |\, \sym{w}\in \sym{W}\rangle\right ), \quad
W=\{1^{\ot p^i}\, |\, i\geq 0\}.
$$
\mlabel{it:rbafpx}
\item
Let $A=\FF_p[X]/\langle x^{p}-x\ |\ x\in X\rangle$ and $0\neq \lambda\in \FF_p$. Let $S\in \pG$ be as defined in Eq.~(\mref{eq:xi}) in the proof. Let $\TEL=\TEL(S)$ be defined in Eq.~(\mref{eq:esl12}). Then
$$
\sha_{\FF_p,\lambda}(A) \cong \FF_p[X\cup\TELs]/\langle \sym{w}^p-\lambdaw \sym{w}\ |\ \sym{w}\in X\cup \TELs\rangle.
$$
\mlabel{it:rbafpi}
\item
Let $A=\FF_p[X]/\langle x^p-1\ |\ x\in X\rangle$ and $0\neq \lambda\in \FF_p$. Let $S\in \jG$ be the abelian group $\mu_p^{|X|}$ where $\mu_p$ is the cyclic multiplicative group of order $p$. Let $\TL_1=\TL_1(S)$ be as defined in Eq.~(\mref{eq:esl12}) and let $\EETL_2=\EETL_2(S)$ be as defined in Eq.~(\mref{eq:esl}). Then
\begin{eqnarray*}
\sha_{\FF_p,\lambda}(A) &\cong &\left (\FF_p[X]/\langle x^p-1\ |\ x\in X\rangle\right) \\
&&\ot \left ( \FF_p[\TLs_1]/\langle w^p-\lambdaw w\ |\ w\in
\TLs_1\rangle \right )\otimes \left ( \FF_p[\EETLs_2]/\langle w^p\ |\
w\in \EETLs_2\rangle \right).
\end{eqnarray*}
\mlabel{it:rbafpe}
\end{enumerate}
\mlabel{thm:rbafp}
\end{theorem}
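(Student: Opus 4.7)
The uniform strategy is to apply the tensor factorization $\sha_{\FF_p,\lambda}(A)=A\ot \sh_{\FF_p,\lambda}(A)$ from Eq.~(\mref{eq:decomp}), realize $A$ as a monoid algebra $\FF_p S$ for a suitable well-ordered monoid $S$ in $\pG$ or $\jG$, apply the corresponding structure theorem for $\sh_{\FF_p,\lambda}(S)$ (Theorem~\mref{thm:psh}, Corollary~\mref{co:pmshx}, or Theorem~\mref{thm:pmsh}), and finally merge the $A$-factor into the resulting polynomial presentation.

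For parts (\mref{it:rbafpx0}) and (\mref{it:rbafpx}), take $S=M\com(X)$. By Proposition~\mref{pp:pexam}.(\mref{it:fg}), (\mref{it:mg}) we have $S\in \fG\subseteq \pG$ and $\FF_p[X]=\FF_p S$. Substituting Theorem~\mref{thm:psh} into the tensor factorization yields part (\mref{it:rbafpx0}) directly. For part (\mref{it:rbafpx}), Corollary~\mref{co:pmshx} together with Eq.~(\mref{eq:l1}) gives $\sh_{\FF_p,\lambda}(\FF_p[X])\cong \FF_p[\TELs_2]\ot(\FF_p[\TELs_1]/\langle \sym{w}^p-\sym{w}\rangle)$ with $\TEL_1=\{1^{\ot p^i}\mid i\geq 0\}$; then tensoring with the polynomial ring $\FF_p[X]$ absorbs $\FF_p[X]\ot \FF_p[\TELs_2]$ into $\FF_p[X\cup \TELs_2]$, producing the claimed presentation with $W=\TEL_1$.

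For part (\mref{it:rbafpi}), realize $A=\FF_p[X]/\langle x^p-x\rangle$ as $\FF_p S$ where $S$ is the finite commutative $p$-idempotent monoid $\{\prod_{x\in X}x^{a_x}\mid 0\leq a_x\leq p-1\}$ with any well order extending a chosen order on $X$; this should be the monoid named in Eq.~(\mref{eq:xi}) of the proof. By Proposition~\mref{pp:pexam}.(\mref{it:ig}), $S\in\iG\subseteq\pG$. Since every element of $S$ satisfies $g^p=g$, we have $S_1=S$, $S_2=\emptyset$, and hence $\TEL_2=\emptyset$ while $\TEL_1=\TEL$; Theorem~\mref{thm:pmsh}.(\mref{it:pmsh1}) therefore collapses to $\sh_{\FF_p,\lambda}(A)\cong \FF_p[\TELs]/\langle \sym{w}^p-\lambda\sym{w}\rangle$. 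Observing that $X$ sits inside $S_1$ as a subset of the length-one Lyndon words in $\TEL_1$, the tensor $A\ot \sh_{\FF_p,\lambda}(A)$ regroups into the single quotient $\FF_p[X\cup \TELs]/\langle \sym{w}^p-\lambda\sym{w}\rangle$. For part (\mref{it:rbafpe}), identify $A=\FF_p[X]/\langle x^p-1\rangle$ with the group algebra of the elementary abelian $p$-group $\mu_p^{|X|}$; equipped with any well order placing the identity smallest, this group lies in $\eG\subseteq\jG$, so Theorem~\mref{thm:pmsh}.(\mref{it:pmsh2}) applies and tensoring with $A$ gives the product formula in the statement.

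The main obstacle, and the only nontrivial work beyond invoking the structure theorems, is the bookkeeping in parts (\mref{it:rbafpi}) and (\mref{it:rbafpe}): one must verify that the chosen $S$ really satisfies the axioms of $\pG$ or $\jG$ with respect to the proposed order, that the intrinsic relations of $A$ (namely $x^p=x$ or $x^p=1$) match up with the relations predicted by the mixable shuffle structure theorem on the length-one generators via the identification $X\hookrightarrow S\hookrightarrow \TEL$ (or via the $w-w^{\comp{p}}$ expressions spanning $\EETL_2$), and that the tensor product of $A$ with the quotient polynomial algebra collapses cleanly into the single quotient on $X\cup \TELs$ (resp.\ the tensor product involving $\TLs_1$ and $\EETLs_2$). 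Parts (\mref{it:rbafpx0}) and (\mref{it:rbafpx}) are essentially direct corollaries of the mixable shuffle structure theorems and should be routine.
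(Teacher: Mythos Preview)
Your proposal is correct and follows the paper's approach exactly: apply the factorization $\sha=A\ot\sh$ from Eq.~(\mref{eq:decomp}) and invoke Theorem~\mref{thm:psh}, Corollary~\mref{co:pmshx}, or Theorem~\mref{thm:pmsh} as appropriate for each part. For part~(\mref{it:rbafpi}) the paper's $S$ in Eq.~(\mref{eq:xi}) is $G^{|X|}$ with $G=\{e\}\cup\mu_{p-1}$ the unitarization of the cyclic group of order $p-1$, which under $x\mapsto\xi$ is precisely your monoid of monomials $\{x^a:0\le a\le p-1\}$; note also that the final merge into $\FF_p[X\cup\TELs]/\langle\sym{w}^p-\sym{w}\rangle$ is simply the tensor-of-quotients identity $(\FF_p[X]/I_1)\ot(\FF_p[\TELs]/I_2)\cong\FF_p[X\sqcup\TELs]/(I_1+I_2)$ and does not require embedding $X$ into $\TEL$.
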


\begin{remark}
{\rm
The four cases in the theorem show quite distinct structures of free commutative Rota-Baxter algebras for different weights and generating algebras $A$. First of all, when the weight is zero, then the polynomial part of $\sha_{\FF_p,0}(X)$ is $\FF_p[X]$ itself. The second tensor factor (the shuffle algebra part) is completely nilpotent.

In the case of $\lambda\neq 0$, when $A=\FF_p[X]$, $\sha_{\FF_p,\lambda}(A)$ is basically a free (i.e., polynomial) $\FF_p$-algebra except the subalgebra
$$ \bigoplus_{k\geq 1} \FF_p 1^{\ot k}\cong
\FF_p[\sym{W}]/\langle
\sym{w}^p-\lambdaw \sym{w}\, |\, \sym{w}\in \sym{W}\rangle,\quad
W=\{1^{\ot p^i}\, |\, i\geq 0\}.$$
When $A=\FF_p[x]/\langle x^p-x\rangle$, even though the corresponding free commutative Rota-Baxter algebra does not have any polynomial part, its structure reflects its base algebra
in the sense that
$$\sha_{\FF_p,\lambda}(A)
\cong \FF_p[\{x\}\cup \TELs]/\langle \sym{w}^p-\lambdaw \sym{w}\ |\ \sym{w}\in \{x\}\cup \TELs\rangle
\cong \bigotimes_{i\in \{x\}\cup \TELs} A_i,
 A_i\cong A, \forall i\in \{x\}\cup \TELs,
$$
is just a tensor product of copies of $A$.
In this sense, when $A=\FF_p[x]/\langle x^p-1\rangle$,
the structure of $\sha_{\FF_p,\lambda}(A)$ has completely diverged from $A$ since the only part of $\sha_{\FF_p,\lambda}(A)$ that is isomorphic to $A$ is the first tensor factor contributed from
$\sha_{\FF_p,\lambda}(A)=A\ot \sh_{\FF_p,\lambda} (A)$.
Such diversities can be expected in other free commutative Rota-Baxter algebras.
}
\end{remark}

\begin{proof}
We recall the tensor decomposition of the free commutative Rota-Baxter algebra on an algebra $A$ in Eq.~(\mref{eq:freerb}):
$$ \sha_{\FF_p,\lambda}(A)= A\ot \sh_{\FF_p,\lambda} (A).
$$
Then Item~(\mref{it:rbafpx0}) follows from Theorem~\mref{thm:psh}.
Item~(\mref{it:rbafpx}) follows from Corollary~\mref{co:pmshx}.

For (\mref{it:rbafpi}), consider the cyclic group of order $p-1$,   $\mu_{p-1}=\{\xi,\xi^2,\cdots,\xi^{p-1}\}$ where $\xi^{p-1}$ is the identity. Define $G=\{e\}\cup \mu_{p-1}$ to be the monoid from the unitarization of $\mu_{p-1}$. So the multiplication on $G$ is extended from $\mu_{p-1}$ by
$$ e\cdot e =e, e\cdot \xi^i = \xi^i=\xi^i\cdot e, 1\leq i\leq p-1.$$
It is clear that the algebra homomorphism
$$f: \FF_p[x]\to \FF_p G, \quad x\mapsto \xi$$
has $\langle x^p-x\rangle$ in its kernel. It is surjective since $f(x^i)=\xi^i$, $1\leq i\leq p-1$, and $f(1)=e$. Then
$\FF_p[x]/\langle x^p-x\rangle \cong \FF_p G$ since both $\FF_p$-algebras have the same dimension.
Now $G$, and hence
\begin{equation}
S: =G^{|X|},
\mlabel{eq:xi}
\end{equation}
 are in the class $\pG$. Then Item~(\mref{it:rbafpi}) follows from Theorem~\mref{thm:pmsh}.(\mref{it:pmsh1}).

Finally Item~(\mref{it:rbafpe}) follows from Theorem~\mref{thm:pmsh}.(\mref{it:pmsh2}).
\end{proof}

\section{Structure theorems on $\ZZ_p$}
\mlabel{sec:zp}
We now lift our Theorem~\mref{thm:pmsh} for mixable shuffle algebras in Section~\mref{sec:chp} from $\FF_p$ to $\ZZ_p$ by the Nakayama Lemma and a topological consideration. We then obtain a canonical polynomial algebra in the free commutative Rota-Baxter $\ZZ_p$-algebra generated by a finite set.

\subsection{Mixable shuffle algebras with coefficients in $\ZZ_p$}
We first recall notations and properties of graded sets and their polynomial algebras. Let $Y=\coprod_{n\ge 1}Y^{(n)}$ be a graded set. We  define the degree of $y\in Y^{(n)}$ by $\deg(y)=n$. Let $F(Y)$ be the free abelian semigroup generated by $Y$. For $y=y_1\cdots y_k\in F(Y)$ with $y_j\in Y, 1\leq j\leq k$, define $\deg(y)=\deg(y_1)+\cdots + \deg(y_k)$. In this way, the polynomial algebra $\bfk[Y]$ over a commutative ring $\bfk$ becomes a graded algebra:
$\bfk[Y]=\oplus_{n\ge 0} \bfk[Y]^{(n)}$.

\begin{lemma} Let $Y=\coprod_{n\geq 0}Y^{(n)}$ be a graded set.
\begin{enumerate}
\item
For any $n\ge 1$, as a $\bfk$-module,
$$\bfk[Y]^{(n)}= (\sum_{j=1}^{n-1}\bfk[Y]^{(j)} \bfk[Y]^{(n-j)}) \oplus \bfk Y^{(n)}.$$
\mlabel{it:grad1}
\item
Let $R=\oplus_{n\geq 0} R^{(n)}$ be a graded algebra and let $T$ be a graded subset of $R$. Let $\sym{T}$ be a set that is in bijection with $T$ and is equipped with the grading from $T$. Then the homomorphism $\phi: \bfk[\sym{T}] \to R$ in Eq.~(\mref{de:sym}) is a graded algebra homomorphism.
\mlabel{it:grad2}
\end{enumerate}
\mlabel{lem:grad}
\end{lemma}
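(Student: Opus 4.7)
The plan is to prove the two parts by directly analyzing the standard monomial basis of $\bfk[Y]$ in the grading defined just before the lemma.

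For part~(a), I would begin by recalling that a $\bfk$-basis of $\bfk[Y]$ is given by the monomials $y_1\cdots y_k$ with $k\ge 0$ and $y_i\in Y$, and that under the grading from $Y$ such a monomial has total degree $\deg(y_1)+\cdots+\deg(y_k)$. Thus a basis of $\bfk[Y]^{(n)}$ is the set $B_n$ of monomials whose degrees sum to $n$. The key step is to split $B_n$ according to the number of factors: the length-one monomials are precisely $Y^{(n)}$, while a monomial $y_1\cdots y_k\in B_n$ with $k\ge 2$ can be written as the product $y_1\cdot(y_2\cdots y_k)$ of two homogeneous factors of strictly positive degrees $j:=\deg(y_1)\in\{1,\dots,n-1\}$ and $n-j\ge 1$, so it lies in $\bfk[Y]^{(j)}\bfk[Y]^{(n-j)}$. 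Conversely, a product of two homogeneous elements of positive degree is a linear combination of monomials of length at least two. This gives the asserted equality as $\bfk$-modules; directness of the sum is immediate because the two summands are spanned by disjoint sets of basis monomials (length exactly one versus length at least two), since a single generator of degree $n$ cannot equal a $\bfk$-linear combination of products of two or more generators.

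For part~(b), since $\phi$ is an algebra homomorphism and $\bfk[\sym{T}]$ is generated as a $\bfk$-algebra by $\sym{T}$, it suffices to verify that $\phi$ carries each homogeneous component of degree $n$ of its domain into $R^{(n)}$. The grading on $\sym{T}$ is transported from that on $T$, so $\deg(\sym{t})=\deg(t)$ for each $t\in T$; because $T$ is assumed to be a graded subset of $R$, we have $t\in R^{(\deg t)}$, and hence $\phi(\sym{t})=t\in R^{(\deg\sym{t})}$. Extending multiplicatively, any basis monomial of $\bfk[\sym{T}]^{(n)}$ has the form $\sym{t}_1\cdots\sym{t}_k$ with $\sum_i\deg(\sym{t}_i)=n$, and its image $t_1\cdots t_k$ lies in $R^{(n)}$ because $R$ is a graded algebra. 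Therefore $\phi(\bfk[\sym{T}]^{(n)})\subseteq R^{(n)}$, proving that $\phi$ is a graded homomorphism.

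Neither step presents any real obstacle; the proof is essentially bookkeeping about the standard grading on a polynomial algebra by degrees of its generators. The only point that might deserve a short explicit argument is the directness in part~(a), but this follows at once from comparing monomial lengths.
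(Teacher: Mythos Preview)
Your proof is correct and follows essentially the same approach as the paper: both arguments reduce part~(\ref{it:grad1}) to the disjoint-union decomposition of the monomial basis $F(Y)^{(n)}$ into single generators $Y^{(n)}$ versus products of two or more generators, and both handle part~(\ref{it:grad2}) by checking that $\phi$ is degree-preserving on generators and extending multiplicatively. The only cosmetic difference is that the paper phrases (\ref{it:grad1}) in terms of the free abelian semigroup $F(Y)$ while you phrase it in terms of monomials, which amounts to the same thing.
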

\begin{proof}
(\mref{it:grad1}).
The degree on $F(Y)$ makes $F(Y)$ into a graded semigroup and
$\bfk[Y]^{(n)}=\bfk F(Y)^{(n)}.$ Then the lemma follows from the disjoint union decomposition
$$ F(Y)^{(n)}= (\cup_{j=1}^{n-1} F(Y)^{(j)}F(Y)^{(n-j)}) \coprod Y^{(n)}$$
of $F(Y)^{(n)}$ into elements of $Y$ and elements which are products of at least two elements of $Y$.
\smallskip

\noindent
(\mref{it:grad2}) is the universal property of $\bfk[\sym{T}]$ as the free commutative algebra generated by the graded set $\sym{T}$~\cite[Proposition 3.1]{Mac}.
To be explicit, $\phi$ preserves the gradings when it is restricted to $\sym{T}$. Since the grading on any graded algebra is multiplicative,
the grading preserving map $\phi: \sym{T} \to T$ extends to a grading preserving homomorphism $\phi: \QQ[\sym{T}]\to R.$
\end{proof}

Consider $S\in \fG$, that is, $S$ is a free abelian semigroup generated by an ordered finite set. We will continue to use the total degree on $S$ defined in Proposition~\mref{pp:pexam}.(\mref{it:fg}).
For a word $w=w_1\otimes \cdots \otimes w_r \in S^{\ot r}\subseteq \sh_{\bfk,\lambda}  (\sg)$, we define the degree of $w$ by
\begin{equation}
\deg(w)=\deg(w_1)+\cdots +\deg(w_r).
\mlabel{eq:shdeg}
\end{equation}
Then $\sh_{\QQ,\lambda}(S)$ is a graded algebra by the same argument as that in~\cite[Theorem 2.1]{Ho} where the case $\lambda=1$ is considered.
Note that
\begin{equation}
\deg(w^{\comp{p}})=\deg(w^{\otimes
p})=p\:\deg(w).
\mlabel{eq:degp}
\end{equation}

Let $\Lyn^{(n)}=\Lyn(S)^{(n)}$ be the subset of Lyndon words on $S$ of degree $n$. Since
all elements in $S$ have positive degrees, $\Lyn^{(n)}$ is finite
for each $n\geq 1$. So we have a graded set $\Lyn=\coprod_{n\geq
1}\Lyn^{(n)}$ with each homogeneous component finite.
By applying Lemma~\mref{lem:grad}.(\mref{it:grad2}), Theorem~\mref{thm:msq} has the following refined form.
\begin{theorem}
Let $S$ be in $\fG$ and let $\lambda$ be in $\QQ$. Then the inclusion map $\Lyn(S) \subseteq \sh_{\QQ,\lambda}(S)$ induces an isomorphism
$f: \QQ[\Lyn(S)] \to \sh_{\QQ,\lambda}(S)$ of graded algebras. Here the grading on $\QQ[\Lyn(S)]$ is given by the graded set $\Lyn(S)$.
\mlabel{thm:msq2}
\end{theorem}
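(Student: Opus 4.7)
The plan is to upgrade the ungraded algebra isomorphism of Theorem~\ref{thm:msq} to an isomorphism of graded algebras, using Lemma~\ref{lem:grad}.(\ref{it:grad2}) as the bridge. The two facts I need are: (a) with the degree of Eq.~(\ref{eq:shdeg}), $\sh_{\QQ,\lambda}(S)$ is a graded algebra, and (b) $\Lyn(S)$ sits inside $\sh_{\QQ,\lambda}(S)$ as a graded subset.

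For (a), I would verify from Eq.~(\ref{eq:msh}) that the mixable shuffle product is homogeneous of degree $0$ with respect to $\deg$. A pure shuffle of $\fraka\in S^{\ot m}$ and $\frakb\in S^{\ot n}$ is a re-tensoring of the same letters, so its degree equals $\deg(\fraka)+\deg(\frakb)$. A merged shuffle replaces a pair $a_i\ot b_j$ by $\lambda\,a_ib_j$; since $S\in\fG$ is a free abelian semigroup with the additive degree of Proposition~\ref{pp:pexam}.(\ref{it:fg}), $\deg(a_ib_j)=\deg(a_i)+\deg(b_j)$, so the total degree is preserved. Hence $\sh_{\QQ,\lambda}(S)^{(p)}\shprl\sh_{\QQ,\lambda}(S)^{(q)}\subseteq\sh_{\QQ,\lambda}(S)^{(p+q)}$, making $\sh_{\QQ,\lambda}(S)$ a graded $\QQ$-algebra; this is exactly the reason for restricting to $S\in\fG$ in the refined statement.

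For (b), each Lyndon word $w=w_1\ot\cdots\ot w_r\in\Lyn^{(n)}(S)$ sits in the degree-$n$ component $\sh_{\QQ,\lambda}(S)^{(n)}$ by definition, so $\Lyn(S)=\coprod_{n\geq 1}\Lyn^{(n)}(S)$ is a graded subset of the graded algebra $\sh_{\QQ,\lambda}(S)$. Applying Lemma~\ref{lem:grad}.(\ref{it:grad2}) with $T=\Lyn(S)$ and $R=\sh_{\QQ,\lambda}(S)$ produces a homomorphism of \emph{graded} $\QQ$-algebras
\[
f:\QQ[\Lyn(S)]\longrightarrow\sh_{\QQ,\lambda}(S),\qquad w\longmapsto w,\ w\in\Lyn(S),
\]
which is precisely the map induced by inclusion.

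It remains to check that $f$ is bijective. But $f$ is the unique $\QQ$-algebra homomorphism sending each Lyndon word to itself, and this coincides with the isomorphism constructed in Theorem~\ref{thm:msq} (in the $\lambda=1$ case by the Hazewinkel--Hoffman Theorem~\ref{thm:qlyn}.(\ref{it:hh}), and in the remaining cases via the rescaling $a_1\ot\cdots\ot a_n\mapsto\lambda^n(a_1\ot\cdots\ot a_n)$, which fixes each word up to a scalar and so does not alter the underlying algebra generators). Thus $f$ is a bijection and hence an isomorphism of graded algebras. The only nontrivial step is (a), and it is essentially a direct inspection of Eq.~(\ref{eq:msh}); the rest is a bookkeeping combination of earlier results.
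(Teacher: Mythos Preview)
Your proposal is correct and follows essentially the same route as the paper: the paper derives this theorem simply by applying Lemma~\mref{lem:grad}.(\mref{it:grad2}) to Theorem~\mref{thm:msq}, after remarking (just before the theorem) that $\sh_{\QQ,\lambda}(S)$ is graded by the same argument as in~\cite[Theorem~2.1]{Ho}. You have filled in exactly these two ingredients---the homogeneity check for $\shprl$ and the observation that $\Lyn(S)$ is a graded subset---and then invoked the same lemma and theorem, so there is nothing to add.
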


Now we consider $\sh_{\ZZ_p,\lambda} (\sg)$ defined
over $\ZZ_p$.

\begin{prop}
Let $\lambda$ be a unit in $\ZZ_p$. For $\sg$ in $\fG$ (resp. in $\jG$) from Proposition~\mref{pp:pexam} (resp. Definition~\mref{de:class}), the natural homomorphism from Definition~\mref{de:sym}
$$\phi :
\ZZ_p[\TELs] \rightarrow \sh_{\ZZ_p,\lambda} (\sg), \quad \sym{w}\mapsto w,$$
$$ {\rm (} resp. \quad \phi :
\ZZ_p[\EETLs] \rightarrow \sh_{\ZZ_p,\lambda} (\sg), \quad \sym{w} \mapsto w {\rm )}$$
 is surjective.
\mlabel{prop:surj}
\end{prop}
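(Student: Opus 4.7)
The plan is to lift the surjectivity over $\FF_p$ provided by Theorem~\mref{thm:pmsh} to $\ZZ_p$ by a Nakayama-type argument, using a grading (for $\fG$) or a reduction to finite subsemigroups plus a length filtration (for $\jG$) to bring us into the finitely generated setting where Nakayama applies. The starting point is the canonical identification
\[
\sh_{\ZZ_p,\lambda}(S)\otimes_{\ZZ_p}\FF_p \;\cong\; \sh_{\FF_p,\bar\lambda}(S),
\]
where $\bar\lambda\in\FF_p^\times$ is the reduction of the unit $\lambda$. Under this identification, the reduction $\bar\phi$ of $\phi$ agrees with the map considered in Theorem~\mref{thm:pmsh}.(\mref{it:pmsh1}) or (\mref{it:pmsh2}), which is surjective.

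For $S\in\fG$, the degree from Proposition~\mref{pp:pexam}.(\mref{it:fg}) makes $\sh_{\ZZ_p,\lambda}(S)$ a graded algebra, and by Eq.~(\mref{eq:degp}) and the fact that Lyndon length adds under tensor concatenation, $\TEL$ is a graded subset. Hence by Lemma~\mref{lem:grad}.(\mref{it:grad2}) the homomorphism $\phi$ decomposes degree by degree into $\ZZ_p$-linear maps
\[
\phi^{(n)}:\ZZ_p[\TELs]^{(n)} \longrightarrow \sh_{\ZZ_p,\lambda}(S)^{(n)}.
\]
Since $S$ has finitely many generators of positive degree, both sides of $\phi^{(n)}$ are finitely generated $\ZZ_p$-modules. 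Let $C^{(n)}$ be the cokernel of $\phi^{(n)}$; it is finitely generated over the local ring $\ZZ_p$. Reduction mod $p$ gives the cokernel of $\bar\phi^{(n)}$, which vanishes by Theorem~\mref{thm:pmsh}.(\mref{it:pmsh1}). Thus $C^{(n)}=pC^{(n)}$, and Nakayama's Lemma forces $C^{(n)}=0$. This yields the surjectivity of $\phi$ for $S\in\fG$.

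For $S\in\jG$ the same strategy works once we cut down to a finite subsemigroup. Given $y\in\sh_{\ZZ_p,\lambda}(S)$, only finitely many elements of $S$ appear as tensor factors of $y$; because $g^{p^2}=g^p$ for every $g\in S$, the powers of a single element eventually become periodic, so the subsemigroup $T$ generated by these finitely many elements is itself finite and lies again in $\jG$. Restricting the order on $S$ to $T$ makes $\EETL(T)$ a subset of $\EETL(S)$, so any preimage of $y$ found in $\ZZ_p[\EETLs(T)]$ yields one in $\ZZ_p[\EETLs(S)]$. For the finite $T$, the length filtration $F_n=\bigoplus_{k\le n}T^{\otimes k}$ on $\sh_{\ZZ_p,\lambda}(T)$ and the corresponding length filtration on $\ZZ_p[\EETLs(T)]$ give the pair of finitely generated $\ZZ_p$-modules to which we apply Nakayama, with the surjectivity mod $p$ now supplied by Theorem~\mref{thm:pmsh}.(\mref{it:pmsh2}).

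The main obstacle I expect is bookkeeping rather than mathematics: one must verify that the map $\phi$ is honestly graded (resp.\ filtered), and that the reduction of $\phi$ modulo $p$ is the same map used in Theorem~\mref{thm:pmsh} so that we inherit its surjectivity. Both facts are consequences of the explicit description of $\TEL$ and $\EETL$ in Eqs.~(\mref{eq:esl12}) and (\mref{eq:esl}) together with Eq.~(\mref{eq:41}) and Lemma~\mref{lem:op1}, but they must be stated precisely before Nakayama can be invoked.
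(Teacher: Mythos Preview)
Your treatment of the case $S\in\fG$ matches the paper's proof exactly: graded map, finite rank in each degree, surjective mod $p$ by Theorem~\mref{thm:pmsh}.(\mref{it:pmsh1}), Nakayama.

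For $S\in\jG$ you diverge from the paper. The paper does \emph{not} use Nakayama here at all; it gives a direct combinatorial argument over $\ZZ_p$: since $g^{p^2}=g^p$ for $g\in S$, every $w^{\comp{p}}$ already lies in $M\tcon(S_1)$, so for $w\in\TL_2$ one writes $w=(w-w^{\comp{p}})+w^{\comp{p}}$ with the first summand in $\EETL_2$ and the second in $\sh_{\ZZ_p,\lambda}(S_1)$, which is generated by $\TL_1$ via Proposition~\mref{pp:onto}.(\mref{it:span}) applied to $S_1$. This shows all of $\TL$ lies in the image of $\phi$, and Proposition~\mref{pp:onto}.(\mref{it:span}) finishes the job.

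Your route (reduce to finite $T$, then length-filter and apply Nakayama) is plausible, but there is a genuine gap. To run Nakayama on the filtered pieces you need $\bar\phi(G_n\otimes\FF_p)=F_n\otimes\FF_p$ for each $n$, i.e.\ that $\bar\phi$ is surjective \emph{on each filtration step}. Theorem~\mref{thm:pmsh}.(\mref{it:pmsh2}) only asserts global surjectivity of $\bar\phi$; it does not say that a word of length $\le n$ has a preimage of filtered degree $\le n$. This filtered surjectivity is true, but it is not bookkeeping: you have to go back into the proof of Theorem~\mref{thm:pmsh}.(\mref{it:pmsh2}) and use that each element of the basis $\overline{U}$ has leading word (in the L-order, Eq.~(\mref{eq:standbase})) of length equal to its filtered degree, so that the triangular change of basis respects the length filtration. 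Without that step, Nakayama on the infinitely generated cokernel of $\phi_T$ is not justified. The paper's direct argument avoids this issue entirely and is shorter.
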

\begin{proof}
We first consider $\sg\in \fG$. In this case $\sg$ is the free abelian
semigroup generated by a finite set.
By Lemma~\mref{lem:grad}.(\mref{it:grad2}), $\phi$ is a homomorphism of graded algebras. Its reduction modulo $p$ gives the graded algebra homomorphism
$$\bar{\phi }: \mathbb{F}_p[\TELs]\rightarrow
\sh_{\FF_p,\bar{\lambda}}(\sg).$$
Here $\bar{\lambda}$ is $\lambda\mod p$. By Theorem \ref{thm:pmsh}, $\bar{\phi}$ is an isomorphism. Therefore the
map of $\mathbb{F}_p$-vector spaces
$$\bar{\phi }^{(n)}: \mathbb{F}_p[\TELs]^{(n)}\rightarrow \sh_{\FF_p,\bar{\lambda}}(\sg)^{(n)}$$
is isomorphic and in particular is surjective. Since for $\sg\in \fG$, the number of elements of fixed degree is finite, the number of \words from $S$ of fixed degree is finite. Thus both
$\ZZ_p[\TELs]^{(n)}$ and $\sh_{\FF_p,\lambda} (\sg)^{(n)}$ are
of finite rank over $\ZZ_p$. Then by Nakayama Lemma the map
$$\phi ^{(n)}: \ZZ_p[\TELs]^{(n)}\rightarrow \sh_{\FF_p,\lambda} (\sg)^{(n)}$$
is surjective. This implies that $\phi $ is surjective for $\sg\in\fG$.
\smallskip

We next consider the case of $\sg\in \jG$. Applying
Proposition~\mref{pp:onto}.(\mref{it:span}) to the semigroup
$\sg_1$ and noting that $\TL_1=\TL(S_1)$ by applying $\rmt$ to Eq.~(\mref{eq:l1i}), we have $\sh_{\ZZ_p,\lambda} (\sg_1)=\phi(\ZZ_p[\TLs_1])$
and hence is in $\sh_{\ZZ_p,\lambda} (S)'$.
Now for any $w\in \TL$, either $w\in\TL_1$ or
$w=\tilde{w}+w^{\comp{p}}$ where $\tilde{w}=w-w^{\comp{p}}\in
\EETL_2\subseteq \sh_{\ZZ_p,\lambda} (\sg)'$ and $w^{\comp{p}}\in \sh_{\ZZ_p,\lambda} (S_1)= \phi(\ZZ_p[\TLs_1])$. Thus
$w\in \sh_{\ZZ_p,\lambda} (\sg)'$. Then the surjectivity follows from
Proposition~\mref{pp:onto}.(\mref{it:span}).
%
%The proof when $\sg\in \jG$ is similar to that of the surjectivity of $\phi$ in the proof of Theorem~\mref{thm:pmsh}.(\mref{it:pmsh2}).
\end{proof}

For $\sg\in \jG$, let $\sym{v} \in \TLs_1$ and $\sym{w}\in \EETLs_2$. Then by Theorem
\ref{thm:pmsh} we have
$$\phi (\sym{v})^{\shprl p}, \phi (\sym{w})^{\shprl p}-\lambdaw\phi (\sym{w})\in
p\sh_{\ZZ_p,\lambda} (\sg).$$ By Proposition \ref{prop:surj} there are
polynomials $Q'_v$ and $Q'_w$ in $\ZZ_p[\EETLs]$ such that
$$\phi (\sym{v})^{\shprl p} =p\phi (Q'_v)\;\;\;\;\phi (\sym{w})^{\shprl p} -\lambdaw \phi (\sym{w})=p\phi (Q'_w).$$ Thus
$$Q_v:=\sym{v}^{p} -pQ'_v, \;\;\; Q_w:=\sym{w}^{p}-\lambdaw \sym{w}-pQ'_w$$
are in $\ker \phi $.
Let $I$ be the ideal of $\ZZ_p[\EETLs]$ generated by the $Q_v$'s and
$Q_w$'s. Then $I\subseteq \ker \phi$. Let $\bar{I}$ be the  closure of $I$ in $\ZZ_p[\EETLs]$ with
respect to the $p$-adic topology, that is,
$$\bar{I}=\bigcap_{n\geq 0}(I+p^n \ZZ_p[\EETLs]).$$
Then the modula $\ZZ_p[\EETLs]/\bar{I}$ is {\bf separated
with the $p$-adic topology}, i.e.
$$\bigcap_{n\geq 0} p^n (\ZZ_p[\EETLs]/\bar{I})=0. $$

Because
$\bar{I}\subset I+p^n \ZZ_p[\EETLs], n\geq 0,$ we have
$$\phi (\bar{I})\subseteq p^n \sh_{\ZZ_p,\lambda} (\sg).$$ So
$\phi (\bar{I})\subseteq \bigcap\limits_{n\geq 0} p^n
\sh_{\ZZ_p,\lambda} (\sg).$ Since $\sh_{\ZZ_p,\lambda} (\sg)$ is a free
$\ZZ_p$-module, we have $\bigcap\limits_{n\geq 0} p^n
\sh_{\ZZ_p,\lambda} (\sg)=0.$ Hence $\phi (\bar{I})=0$.
Thus $\phi $ induces a homomorphism
$$\ZZ_p[\EETLs]/\bar{I}\rightarrow \sh_{\ZZ_p,\lambda} (\sg),$$ which is
again denoted by $\phi $.
We give a lemma before presenting our main theorem in this section.
\begin{lemma} Let $M$ be a $\ZZ_p$-module that is separated for the
$p$-adic topology and let $N$ be a torsion-free $\ZZ_p$-module. Let
$f: M\rightarrow N$ be a homomorphism of $\ZZ_p$-modules.
If the induced homomorphism
$$ \bar{f}: M \ot\FF_p \rightarrow N \ot \FF_p$$ is injective,
then $f$ is also injective.
\mlabel{lem:inj}
\end{lemma}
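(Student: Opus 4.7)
The plan is to show $\ker f = 0$ by proving that any $m \in \ker f$ lies in $\bigcap_{n\geq 0} p^n M$, which is $0$ by the separatedness hypothesis on $M$. The argument is a short induction on $n$, with the base case $n=1$ supplied by the hypothesis on $\bar f$ and the inductive step powered by the torsion-freeness of $N$.

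More precisely, I would first dispose of the base case: if $m \in \ker f$, then its class $\bar m \in M/pM = M\otimes \FF_p$ satisfies $\bar f(\bar m) = \overline{f(m)} = 0$, so by injectivity of $\bar f$ we get $\bar m = 0$, i.e.\ $m \in pM$.

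Next I would carry out the induction. Assume, for some $n \geq 1$, that every element of $\ker f$ lies in $p^n M$. Given $m \in \ker f$, write $m = p^n m'$ for some $m' \in M$. Then $0 = f(m) = p^n f(m')$ in $N$, and since $N$ is torsion-free, $f(m') = 0$. So $m' \in \ker f$, and by the base case $m' \in pM$, which yields $m \in p^{n+1} M$. By induction, $\ker f \subseteq \bigcap_{n\geq 0} p^n M$, which vanishes by separatedness.

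There is no real obstacle here; the only subtlety is to make sure the two hypotheses are each used in their correct role, namely separatedness of $M$ to conclude from $m \in \bigcap_n p^n M$ that $m = 0$, and torsion-freeness of $N$ to cancel the factor $p^n$ in the equation $p^n f(m') = 0$ in $N$. The hypothesis on $\bar f$ is only needed to get the very first step $\ker f \subseteq pM$.
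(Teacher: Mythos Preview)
Your proof is correct and follows essentially the same approach as the paper: show that any $m\in\ker f$ lies in $p^nM$ for all $n$ by repeatedly dividing by $p$ and invoking torsion-freeness of $N$, then conclude $m=0$ by separatedness. The only cosmetic difference is that the paper peels off one factor of $p$ at a time (writing $m=pm_1$, then $m_1=pm_2$, etc.), whereas you phrase it as a direct induction on the containment $\ker f\subseteq p^nM$; the content is identical.
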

\begin{proof} Let $m\in \mathrm{ker}(f)$. We prove $m=0$. Since
$\bar{f}$ is an isomorphism, we have $m\in pM$. Write $m=pm_1$. Then
$f(pm_1)=pf(m_1)=0.$ Since $N$ is torsion-free, we get $f(m_1)=0$.
So we have $m_1\in pM$ and $m\in p^2 M$. An inductive argument shows
that $m\in \bigcap\limits_{n\geq 0} p^nM$. Then the condition that
$M$ is separated for the $p$-adic topology implies that $m=0$.
\end{proof}

\begin{theorem} Let $\lambda\in \ZZ_p$ be a $p$-adic unit.
\begin{enumerate}
\item
For $\sg\in \fG$, the natural homomorphism
$$\phi :\ZZ_p[\TELs]\rightarrow\sh_{\ZZ_p,\lambda} (\sg)$$ is
an isomorphism of graded $\ZZ_p$-algebras. In other words,
$\sh_{\ZZ_p,\lambda} (\sg)=\ZZ_p[\TEL]$.
In particular, there is a natural isomorphism
\begin{equation}
\ZZ_p \TEL^{(n)} \cong \sh_{\ZZ_p,\lambda}(S)^{(n)}/\Big ( \sum_{i=1}^{n-1} \sh_{\ZZ_p,\lambda}(S)^{(i)}\sh_{\ZZ_p,\lambda}(S)^{(n-i)}\Big).
\mlabel{eq:natiso}
\end{equation}
Further, the homogeneous component $\TEL^{(n)}$ of $\TEL$ of degree $n$ has cardinality
$|\Lyn(S)^{(n)}|$, $n\geq 1$.
\mlabel{it:pgrad}
\item
For a semigroup $\sg\in \jG$, the natural homomorphism
$$\phi :\ZZ_p[\EETLs]/\bar{I}\rightarrow\sh_{\ZZ_p,\lambda} (\sg)$$ is
an isomorphism.
\mlabel{it:iso}
\end{enumerate}
\mlabel{thm:isomor}
\end{theorem}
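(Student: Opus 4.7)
The plan is to establish both parts by the same lifting scheme from $\FF_p$ to $\ZZ_p$: combine the surjectivity already proved in Proposition~\mref{prop:surj} with injectivity extracted from Theorem~\mref{thm:pmsh} via Lemma~\mref{lem:inj}. The key observation is that in both settings the domain of $\phi$ has been designed so that its reduction modulo $p$ matches exactly the $\FF_p$-structure theorem, while the codomain $\sh_{\ZZ_p,\lambda}(\sg)$ is visibly a free (hence torsion-free and $p$-adically separated) $\ZZ_p$-module with basis $M\tcon(S)$.

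For part~(\mref{it:pgrad}), I would first observe that for $\sg\in\fG$ every nontrivial $g\in\sg$ satisfies $g^p\neq g$, so $S_1=\emptyset$ and consequently $\TEL_1=\emptyset$, $\TEL=\TEL_2$. Thus Theorem~\mref{thm:pmsh}.(\mref{it:pmsh1}) specializes to the clean statement $\sh_{\FF_p,\bar\lambda}(\sg)\cong\FF_p[\TELs]$, and the mod $p$ reduction $\bar\phi:\FF_p[\TELs]\to\sh_{\FF_p,\bar\lambda}(\sg)$ coincides with this isomorphism. Now Proposition~\mref{prop:surj} gives surjectivity of $\phi$; both $\ZZ_p[\TELs]$ and $\sh_{\ZZ_p,\lambda}(\sg)$ are free $\ZZ_p$-modules, hence separated and torsion-free, so Lemma~\mref{lem:inj} promotes the injectivity of $\bar\phi$ to injectivity of $\phi$. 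The natural isomorphism~(\mref{eq:natiso}) then follows by applying Lemma~\mref{lem:grad}.(\mref{it:grad1}) to the graded isomorphism $\phi$. For the cardinality claim I would tensor with $\QQ_p$: $\sh_{\ZZ_p,\lambda}(\sg)\otimes\QQ_p\cong\sh_{\QQ_p,\lambda}(\sg)$ is, by the same argument underlying Theorem~\mref{thm:msq2} (which is characteristic-free in the weight and uses only $\QQ$-invertibility of $\lambda$ when $\lambda\neq 0,1$), isomorphic as a graded algebra to $\QQ_p[\Lyn(S)]$; comparing the ranks of the indecomposable quotient in degree $n$ yields $|\TEL^{(n)}|=|\Lyn(S)^{(n)}|$.

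For part~(\mref{it:iso}), the domain requires more care. By construction $\bar I\supseteq I$ contains all the relations $Q_v=\sym{v}^p-pQ'_v$ ($v\in\TLs_1$) and $Q_w=\sym{w}^p-\lambda\sym{w}-pQ'_w$ ($w\in\EETLs_2$), and is the $p$-adic closure of the ideal they generate. I would first check that $\ZZ_p[\EETLs]/\bar I$ is $p$-adically separated (immediate from the definition as an intersection), and that the reduction modulo $p$ of $\bar I$ equals the ideal
\[
\bigl\langle \sym{v}^p,\ \sym{w}^p-\bar\lambda\sym{w}\ \big|\ \sym{v}\in\TLs_1,\ \sym{w}\in\EETLs_2\bigr\rangle
\]
in $\FF_p[\EETLs]$: the inclusion $\subseteq$ is clear from the generators of $I$, and the reverse inclusion holds because closing in the $p$-adic topology before reducing adds nothing modulo $p$. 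With this identification, Theorem~\mref{thm:pmsh}.(\mref{it:pmsh2}) says precisely that the induced map $\bar\phi:(\ZZ_p[\EETLs]/\bar I)\otimes\FF_p\to\sh_{\FF_p,\bar\lambda}(\sg)$ is an isomorphism. Surjectivity of $\phi$ itself is Proposition~\mref{prop:surj}, and Lemma~\mref{lem:inj} (with $M=\ZZ_p[\EETLs]/\bar I$ and $N=\sh_{\ZZ_p,\lambda}(\sg)$) upgrades $\bar\phi$'s injectivity to injectivity of $\phi$.

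The main obstacle I anticipate is the bookkeeping in part~(\mref{it:iso}) to check that reducing the topological closure $\bar I$ modulo $p$ yields exactly the expected ideal in $\FF_p[\EETLs]$ and nothing more; equivalently, one must verify that an element of $\ZZ_p[\EETLs]$ lying in $I+p^n\ZZ_p[\EETLs]$ for every $n$ but outside $I+p\ZZ_p[\EETLs]$ would contradict the structure of $\sh_{\FF_p,\bar\lambda}(\sg)$ provided by Theorem~\mref{thm:pmsh}.(\mref{it:pmsh2}). By contrast, part~(\mref{it:pgrad}) is essentially a graded Nakayama argument, since for $\sg\in\fG$ each homogeneous component has finite $\ZZ_p$-rank, so the grading reduces the global statement to a finite-rank lift in each degree and sidesteps any topological subtlety.
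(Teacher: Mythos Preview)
Your approach is correct and matches the paper's proof almost exactly: surjectivity from Proposition~\mref{prop:surj}, injectivity from Lemma~\mref{lem:inj} applied with $M=\ZZ_p[\TELs]$ (resp.\ $M=\ZZ_p[\EETLs]/\bar I$) and $N=\sh_{\ZZ_p,\lambda}(\sg)$, using Theorem~\mref{thm:pmsh} to identify the mod~$p$ reduction; then Lemma~\mref{lem:grad}.(\mref{it:grad1}) for Eq.~(\mref{eq:natiso}).

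Two small remarks. First, your ``main obstacle'' in part~(\mref{it:iso}) dissolves once you note $I\subseteq\bar I\subseteq I+p\,\ZZ_p[\EETLs]$, so $\bar I$ and $I$ have the same image in $\FF_p[\EETLs]$; no further argument is needed. Second, for the cardinality $|\TEL^{(n)}|=|\Lyn(S)^{(n)}|$ the paper takes a more direct route than your $\QQ_p$-comparison: by Lemma~\mref{lem:iff}.(\mref{it:iff3}) one has $\Lyn=\{u^{\comp{p^i}}\,|\,u\in\EL,\ i\geq 0\}$ and by definition $\TEL=\{u^{\ot p^i}\,|\,u\in\EL,\ i\geq 0\}$, and Eq.~(\mref{eq:degp}) shows $\deg(u^{\ot p^i})=\deg(u^{\comp{p^i}})$, giving an explicit degree-preserving bijection. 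Your rank argument via $\sh_{\QQ_p,\lambda}(\sg)\cong\QQ_p[\Lyn(S)]$ is valid but less elementary.
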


\begin{proof}
Let $S\in \fG$ or $\jG$. By Proposition
\ref{prop:surj}, $\phi $ is surjective. By Theorem \ref{thm:pmsh},
$\phi \ot \FF_p $ is an isomorphism.
Note that for $S\in \fG$ (resp. $S\in \jG$), $\ZZ_p[\TELs]$ (resp.
$\ZZ_p[\EETLs]/\bar{I}$) is a $\ZZ_p$-module separated for the
$p$-adic topology and that $\sh_{\ZZ_p,\lambda} (\sg)$ is a free
$\ZZ_p$-module. Applying Lemma~\mref{lem:inj} with
$M=\ZZ_p[\TELs]$ (resp. $M=\ZZ_p[\EETLs]/\bar{I}$) and $N=\sh_{\ZZ_p,\lambda} (\sg)$ we obtain the
injectivity of $\phi $.

This proves Item (\mref{it:iso}) and a part of Item~(\mref{it:pgrad}).  To finish the proof of Item~(\mref{it:pgrad}), let $\sg\in \fG$.
By Lemma~\mref{lem:grad}.(\mref{it:grad2}), the algebra isomorphism $\phi $ is graded. Since the grading on $\TELs$ is obtained from $\TEL$, $\sh_{\ZZ_p,\lambda} (\sg)=\ZZ_p[\TEL]$ as a graded algebra.
Thus $\sh_{\ZZ_p,\lambda} (\sg)^{(n)}=\ZZ_p[\TEL]^{(n)}, n\geq 0$. So by Lemma~\mref{lem:grad}.(\mref{it:grad1}) we have
\begin{eqnarray*}
\ZZ_p \TEL^{(n)} &\cong& \ZZ_p[\TEL]^{(n)}/\Big ( \sum_{i=1}^{n-1} \ZZ_p[\TEL]^{(i)}\ZZ_p[\TEL]^{(n-i)}\Big)\\
&= &\sh_{\ZZ_p,\lambda}(S)^{(n)}/\Big ( \sum_{i=1}^{n-1} \sh_{\ZZ_p,\lambda}(S)^{(i)}\sh_{\ZZ_p,\lambda}(S)^{(n-i)}\Big).
\end{eqnarray*}

Since
$$ \TEL=\{ u^{\ot p^i}\ |\ u\in \EL, i\geq 0\}, \quad
\Lyn = \{ u^{\comp{p^i}}\ |\ u\in \EL, i\geq 0\}$$
by Lemma~\mref{lem:iff}.(\mref{it:iff3}), and
$$ \deg(u^{\ot p^i})=p^i \deg(u) = \deg(u^{\comp{p^i}})$$
by Eq.~(\mref{eq:degp}), we have
$ |\TEL^{(n)}| = |\Lyn^{(n)}|.$
\end{proof}

\subsection{Free Rota-Baxter algebras with coefficients in $\ZZ_p$}
\mlabel{ss:rbazp}

\begin{theorem}
Let $X$ be a finite set and let $S$ be the free abelian semigroup generated by $X$. Let $\TEL=\TEL(S)$. Let $\lambda\in \ZZ_p$ be a $p$-adic unit. Then there is a canonical subalgebra of $\sha_{\ZZ_p,\lambda}(\ZZ_p[X])$ that is isomorphic to $\ZZ_p[X\cup \TELs]$.
\mlabel{thm:rbazp}
\end{theorem}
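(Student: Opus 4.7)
The plan is to exhibit the sought-after subalgebra as the tensor product of two natural polynomial subalgebras sitting inside the two factors of the decomposition $\sha_{\ZZ_p,\lambda}(\ZZ_p[X]) = \ZZ_p[X] \ot \sh_{\ZZ_p,\lambda}(\ZZ_p[X])$ from Eq.~(\ref{eq:freerb}). The left factor $\ZZ_p[X]$ contributes the polynomial piece in $X$. For the right factor, I take the sub-mixable-shuffle-algebra $\sh_{\ZZ_p,\lambda}(\ZZ_p S)$ cut out by the (non-unitary) subalgebra $\ZZ_p S \subseteq \ZZ_p[X]$. Since $S$ lies in $\fG$ by Proposition~\ref{pp:pexam}.(\ref{it:fg}), Theorem~\ref{thm:isomor}.(\ref{it:pgrad}) identifies $\sh_{\ZZ_p,\lambda}(\ZZ_p S)$ with the graded polynomial algebra $\ZZ_p[\TELs(S)]$. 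The claim is that the canonical subalgebra
\[
\ZZ_p[X] \ot \sh_{\ZZ_p,\lambda}(\ZZ_p S)\ \subseteq\ \sha_{\ZZ_p,\lambda}(\ZZ_p[X])
\]
is isomorphic to $\ZZ_p[X \cup \TELs]$, where $X$ and $\TELs$ are regarded as disjoint sets of indeterminates (automatic, since $\TELs$ consists by Definition~\ref{de:sym} of fresh symbols in bijection with $\TEL$).

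The first step is the observation that $\ZZ_p[X] = \ZZ_p\cdot 1 \oplus \ZZ_p S$ as $\ZZ_p$-modules and $S \cdot S \subseteq S$, so that $\ZZ_p S \hookrightarrow \ZZ_p[X]$ is an inclusion of commutative (non-unitary) $\ZZ_p$-algebras. The second step is to verify that the induced module inclusion $\sh_{\ZZ_p,\lambda}(\ZZ_p S) \hookrightarrow \sh_{\ZZ_p,\lambda}(\ZZ_p[X])$ respects the mixable shuffle product: inspecting either the explicit formula~(\ref{eq:msh}) or the recursion~(\ref{eq:quasi}), every ordinary shuffle and every merged shuffle of pure tensors with entries in $\ZZ_p S$ again has entries in $\ZZ_p S$, since the only operation applied to entries is the multiplication of $\ZZ_p[X]$, which restricts to $\ZZ_p S$. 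The third step is to recall from the construction behind Theorem~\ref{thm:shua} that the Rota-Baxter product on $\sha_{\ZZ_p,\lambda}(\ZZ_p[X])$ is the tensor product of the multiplication on $\ZZ_p[X]$ with the mixable shuffle product on $\sh_{\ZZ_p,\lambda}(\ZZ_p[X])$, i.e.\ $(a \ot \mathfrak{u})(b \ot \mathfrak{v}) = (ab) \ot (\mathfrak{u} \shprl \mathfrak{v})$; combined with the second step this shows that $\ZZ_p[X] \ot \sh_{\ZZ_p,\lambda}(\ZZ_p S)$ is a $\ZZ_p$-subalgebra of $\sha_{\ZZ_p,\lambda}(\ZZ_p[X])$. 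The final step invokes Theorem~\ref{thm:isomor}.(\ref{it:pgrad}) to obtain
\[
\ZZ_p[X] \ot \sh_{\ZZ_p,\lambda}(\ZZ_p S)\ \cong\ \ZZ_p[X] \ot \ZZ_p[\TELs]\ \cong\ \ZZ_p[X \cup \TELs].
\]

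The only real obstacle is the compatibility of the product on $\sha_{\ZZ_p,\lambda}(\ZZ_p[X])$ with the two-factor decomposition used in step three; this is built into the construction of Theorem~\ref{thm:shua} and has already been used in the same form in the proof of Theorem~\ref{thm:rbl}, so no new calculation is needed. Everything else is bookkeeping, and canonicity is manifest: the subalgebra depends only on the canonical data $(X, S, \TEL(S))$ attached to $\ZZ_p[X]$, and the identification with $\ZZ_p[X \cup \TELs]$ is the tensor product of the identity on $\ZZ_p[X]$ with the isomorphism provided by Theorem~\ref{thm:isomor}.(\ref{it:pgrad}).
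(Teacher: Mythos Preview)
Your proof is correct and follows essentially the same route as the paper: both arguments take the tensor decomposition $\sha_{\ZZ_p,\lambda}(\ZZ_p[X]) = \ZZ_p[X]\ot \sh_{\ZZ_p,\lambda}(M\com(X))$, use the inclusion $S\hookrightarrow M\com(X)$ to embed $\sh_{\ZZ_p,\lambda}(S)$ as a subalgebra of the right factor, invoke Theorem~\ref{thm:isomor}.(\ref{it:pgrad}) to identify $\sh_{\ZZ_p,\lambda}(S)\cong\ZZ_p[\TELs]$, and then tensor with $\ZZ_p[X]$. Your version is a bit more explicit about why the inclusion of mixable shuffle algebras and the tensor-product multiplication behave as needed, but the structure of the argument is the same.
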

\begin{proof}
By Theorem~\mref{thm:isomor}, $\sh_{\ZZ_p,\lambda} (S)\cong\ZZ_p[\TELs]$. The inclusion of $S$ into the free abelian monoid $M\com(X)$ induces the inclusion
$\sh_{\ZZ_p,\lambda} (S)\subseteq \sh_{\ZZ_p,\lambda}  (M\com(X))$. Then we have
$$
\begin{aligned}
\ZZ_p[X\cup \TELs]& \cong\ZZ_p[X]\ot \ZZ_p[\TELs]
\cong\ZZ_p[X]\ot \sh_{\ZZ_p,\lambda} (S) \\
& \subseteq \ZZ_p[X]\ot \sh_{\ZZ_p,\lambda}  (M\com(X))
= \sha_{\ZZ_p,\lambda}(\ZZ_p[X]).
\end{aligned}$$
\end{proof}

\section{Structure theorems on $\ZZ$}
\mlabel{sec:int}
We now study mixable shuffle algebras with coefficients in $\ZZ$ by generalizing the work of Hazewinkel~\mcite{Ha} on the Ditters Conjecture (Theorem~\mref{thm:qlyn}.(\mref{it:dh}). We first extract from his proof a general principle (Theorem~\mref{thm:grfr}) showing that a compatible system of local polynomial conditions implies a global one. This result will then be combined with our result on the local case in Section~\mref{sec:zp} and be applied to mixable shuffle algebras and free commutative Rota-Baxter algebras.

\subsection{Mixable shuffle algebras with coefficients in $\ZZ$}
\mlabel{ss:shzp}

The following lemma is well-known but we include a short proof for the lack of references.

\begin{lemma}
\begin{enumerate}
\item
A finitely generated abelian group $M$ is free of rank $k$ if $M\ot \ZZ_p\cong \ZZ_p^k$ for all prime numbers $p$.
\mlabel{it:mod}
\item
A homomorphism of finitely generated
free abelian groups $f: M_1\rightarrow M_2$ is injective and identifies $M_1$ with a direct summand of $M_2$ if for every prime $p$, the homomorphism $f\ot \ZZ_p: M_1\otimes
\ZZ_p\rightarrow M_2\otimes \ZZ_p$ is injective and identifies $M_1\ot \ZZ_p$ with a direct summand of $M_2\ot \ZZ_p$ as a $\ZZ_p$-module.
\mlabel{it:mod2}
\end{enumerate}
\mlabel{lem:bas}
\end{lemma}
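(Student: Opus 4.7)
The plan is to deduce both items from the structure theorem for finitely generated abelian groups together with the observation that, for a finitely generated abelian group $M$, the flat base change $M \mapsto M\otimes \ZZ_p$ commutes with taking cokernels and sends the torsion subgroup of $M$ to its $p$-primary part tensored with $\ZZ_p$.

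For Item~(\mref{it:mod}), I would apply the structure theorem to write $M\cong \ZZ^r\oplus T$ with $T$ a finite torsion abelian group. Tensoring with $\ZZ_p$ yields $M\otimes\ZZ_p\cong \ZZ_p^r\oplus (T\otimes\ZZ_p)$, and the second summand equals $T_p$, the $p$-primary component of $T$. Since by hypothesis $M\otimes\ZZ_p$ is torsion-free for every prime $p$, we must have $T_p=0$ for all $p$, hence $T=0$. Comparing ranks then forces $r=k$, which is what is claimed.

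For Item~(\mref{it:mod2}), I would prove injectivity first and then use Item~(\mref{it:mod}) to promote ``free cokernel at every prime'' to ``free cokernel globally''. The injectivity of $f$ follows from the fact that, since $M_1$ is a finitely generated free abelian group, the canonical map $M_1\to M_1\otimes\ZZ_p$ is injective for any fixed prime $p$; combining this with the injectivity of $f\otimes\ZZ_p$ shows $\ker f=0$. Having this, the short exact sequence $0\to M_1\to M_2\to \cok f\to 0$ remains exact after tensoring with $\ZZ_p$ (injectivity of $f\otimes\ZZ_p$ is given), so
\begin{equation*}
\cok(f)\otimes \ZZ_p \cong \cok(f\otimes\ZZ_p).
\end{equation*}
The hypothesis that $M_1\otimes\ZZ_p$ is a direct summand of $M_2\otimes\ZZ_p$ means that $\cok(f\otimes\ZZ_p)$ is a free $\ZZ_p$-module. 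Thus $\cok(f)\otimes\ZZ_p$ is a finitely generated free $\ZZ_p$-module for every $p$, and Item~(\mref{it:mod}) applied to the finitely generated abelian group $\cok(f)$ implies that $\cok(f)$ is itself free. Therefore the exact sequence $0\to M_1\to M_2\to \cok f\to 0$ splits, exhibiting $M_1$ as a direct summand of $M_2$.

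I do not anticipate a serious obstacle: the only minor care is to note that right-exactness of $-\otimes\ZZ_p$ together with the assumed local injectivity is precisely what lets us identify $\cok(f)\otimes\ZZ_p$ with $\cok(f\otimes\ZZ_p)$, which is the bridge from Item~(\mref{it:mod2}) back to Item~(\mref{it:mod}).
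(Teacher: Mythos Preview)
Your proposal is correct and follows essentially the same route as the paper: Item~(\mref{it:mod}) via the structure theorem, and Item~(\mref{it:mod2}) by first establishing injectivity of $f$, then identifying $\cok(f)\otimes\ZZ_p$ with $\cok(f\otimes\ZZ_p)$ and invoking Item~(\mref{it:mod}) to conclude that $\cok(f)$ is free, hence the sequence splits. The only cosmetic differences are that the paper deduces injectivity by applying Item~(\mref{it:mod}) to $\ker f$ (using $\ker(f)\otimes\ZZ_p=\ker(f\otimes\ZZ_p)$), and it explicitly records that the $\ZZ_p$-rank of the cokernel is $\rank_\ZZ(M_2)-\rank_\ZZ(M_1)$ so that a single $k$ works for all $p$ in the hypothesis of Item~(\mref{it:mod}).
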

\begin{proof}
(\mref{it:mod}) follows from the fundamental theorem of finitely generated abelian groups.
\smallskip

\noindent
(\mref{it:mod2}).
Since $f\ot \ZZ_p$ is injective, $\ker(f\ot \ZZ_p)=\ker(f) \ot \ZZ_p$ is the free $\ZZ_p$-module of rank 0. Thus by Item (\mref{it:mod}), $\ker f$ is the free abelian group of rank 0, so is 0.
Since $(f\ot \ZZ_p)(M_1\ot \ZZ_p)$ is a direct summand of the free $\ZZ_p$-module $M_2\ot \ZZ_p$, the quotient $(M_2\ot \ZZ_p)/(f\ot \ZZ_p)(M_1\ot \ZZ_p)=(M_2/f(M_1))\ot \ZZ_p$ is a free $\ZZ_p$-module whose $\ZZ_p$-rank is
$$\rank_{\ZZ_p}(M_2\ot \ZZ_p)-\rank_{\ZZ_p}(M_1\ot \ZZ_p)
=\rank_\ZZ(M_2)-\rank_\ZZ(M_1).$$
So by Item (\mref{it:mod}), $M_2/f(M_1)$ is free.
Therefore, $f(M_1)$ is a direct summand of $M_2$.
\end{proof}

In the following theorem, we denote $\Spec(\ZZ)=\{0\}\cup \{p\ |\ p {\rm\ a\ prime\ of\ }\ZZ\}$. Also denote $\ZZ_0=\QQ$ for ease of notations.
\begin{theorem}
Let $R=\oplus_{n\geq 0} R^{(n)}$ be a commutative graded $\ZZ$-algebra with each homogenous piece $R^{(n)}$ a free $\ZZ$-module.
Suppose that, for each $\ell\in \Spec(\ZZ)$,  there exists a graded subset $Y_{\ell}=\coprod_{n\geq 1}Y_{\ell}^{(n)}$ of $R\ot \ZZ_\ell$ with the following properties.
\begin{enumerate}
\item
For a fixed $n\ge 0$, $|Y_{\ell}^{(n)}|$ is finite with the same cardinality when $\ell\in \Spec(\ZZ)$ varies;
\mlabel{it:fin}
\item
For every $\ell\in \Spec(\ZZ)$, $R\ot \ZZ_\ell=\ZZ_\ell[Y_\ell]$ as a graded $\ZZ_\ell$-algebra.
\mlabel{it:poly}
\end{enumerate}
Then there is a graded subset $Y=\coprod_{n\ge 0} Y^{(n)}$ of $R$ such that
\begin{enumerate}
\item[(i)]
$|Y^{(n)}|=|Y_{0}^{(n)}|$ for all $n\geq 0$;
\item[(ii)]
$R\cong \ZZ[Y]$ as a graded algebra.
\end{enumerate}
\mlabel{thm:grfr}
\end{theorem}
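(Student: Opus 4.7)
The plan is to build the graded set $Y$ one degree at a time, by identifying its degree-$n$ piece with a $\ZZ$-basis of the module of indecomposables
$$Q^{(n)} := R^{(n)}\Big/\sum_{i=1}^{n-1} R^{(i)}R^{(n-i)},$$
and then verifying that the resulting algebra homomorphism $\ZZ[Y]\to R$ is a graded isomorphism. Indeed, if $R$ already had the form $\ZZ[Y]$ as a graded polynomial algebra, Lemma~\mref{lem:grad}.(\mref{it:grad1}) would force $Q^{(n)}\cong \ZZ Y^{(n)}$, so any candidate $Y^{(n)}$ must lift a $\ZZ$-basis of $Q^{(n)}$.

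First I would verify that $R^{(n)}$, and hence $Q^{(n)}$, is finitely generated over $\ZZ$. Since $R^{(n)}$ is free by assumption and $R^{(n)}\otimes\QQ \cong \QQ[Y_0]^{(n)}$ is finite-dimensional, because each graded piece of $Y_0$ is finite by hypothesis~(\mref{it:fin}), $R^{(n)}$ has finite $\ZZ$-rank. Next I would prove that $Q^{(n)}$ is itself a free abelian group of rank $|Y_0^{(n)}|$. For each $\ell\in \Spec(\ZZ)$, applying Lemma~\mref{lem:grad}.(\mref{it:grad1}) to the graded algebra $R\otimes\ZZ_\ell = \ZZ_\ell[Y_\ell]$ and invoking flatness of $\ZZ_\ell$ over $\ZZ$ yields
$$Q^{(n)}\otimes\ZZ_\ell \;\cong\; \ZZ_\ell Y_\ell^{(n)},$$
which is $\ZZ_\ell$-free of rank $|Y_\ell^{(n)}| = |Y_0^{(n)}|$, a common value across $\ell$ by hypothesis~(\mref{it:fin}). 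Lemma~\mref{lem:bas}.(\mref{it:mod}) then promotes $Q^{(n)}$ itself to a free $\ZZ$-module of rank $|Y_0^{(n)}|$.

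Now choose, for each $n\geq 1$, a set $Y^{(n)}\subseteq R^{(n)}$ of $|Y_0^{(n)}|$ elements whose images in $Q^{(n)}$ form a $\ZZ$-basis, and set $Y:=\coprod_{n\geq 1} Y^{(n)}$. The inclusion $Y\hookrightarrow R$ extends to a graded $\ZZ$-algebra homomorphism $\phi:\ZZ[Y]\to R$, and I claim $\phi^{(n)}$ is an isomorphism for every $n$, by induction on $n$. For surjectivity, the inductive hypothesis already places $\sum_{i=1}^{n-1}R^{(i)}R^{(n-i)}$ inside $\mrm{im}(\phi)$, while the cosets of $Y^{(n)}$ generate $Q^{(n)}$ by construction, so $\phi^{(n)}$ is surjective. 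For injectivity, both $\ZZ[Y]^{(n)}$ and $R^{(n)}$ are finitely generated free $\ZZ$-modules; the rank of $\ZZ[Y]^{(n)}$, counted by degree-$n$ monomials in the generators, depends only on the cardinalities $|Y^{(i)}|$ for $i\leq n$, which by~(\mref{it:fin}) equal $|Y_\ell^{(i)}|$. Hence $\mrm{rank}_\ZZ\ZZ[Y]^{(n)} = \mrm{rank}_{\ZZ_\ell}\ZZ_\ell[Y_\ell]^{(n)} = \mrm{rank}_\ZZ R^{(n)}$, and a surjection of free abelian groups of equal finite rank has trivial kernel.

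The decisive step, where the hypotheses really pull their weight, is the freeness of $Q^{(n)}$ over $\ZZ$: this is exactly the local-to-global assertion Lemma~\mref{lem:bas}.(\mref{it:mod}) is designed for, and it hinges on the uniform cardinality $|Y_\ell^{(n)}|=|Y_0^{(n)}|$ across all $\ell\in\Spec(\ZZ)$, including $\ell=0$ and every finite prime. Without this compatibility the ranks over different $\ZZ_\ell$ could disagree and the cokernel of $\phi$ could acquire torsion, which is precisely the obstruction that derailed earlier attempts at the Ditters conjecture.
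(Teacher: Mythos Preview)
Your proof is correct and follows essentially the same route as the paper's: define the indecomposables $Q^{(n)}$ (the paper calls them $G^{(n)}$), use Lemma~\mref{lem:grad}.(\mref{it:grad1}) together with right exactness of $-\otimes\ZZ_\ell$ to identify $Q^{(n)}\otimes\ZZ_\ell$ with $\ZZ_\ell Y_\ell^{(n)}$, invoke Lemma~\mref{lem:bas}.(\mref{it:mod}) to conclude $Q^{(n)}$ is free of the right rank, lift a basis to $Y^{(n)}$, and finish with an inductive surjectivity argument plus a rank count for injectivity. The only cosmetic differences are that you make the finite generation of $R^{(n)}$ explicit (the paper leaves it implicit) and you cite flatness where right exactness already suffices.
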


\begin{proof}
Fix $n\ge 1$. Consider the right exact sequence
\begin{equation}
\bigoplus_{j=1}^{n-1}(R^{(j)}\ot R^{(n-j)}) \ola{\mu_n} R^{(n)} \ola{\pi_n} G^{(n)}\to 0
\mlabel{eq:zcok}
\end{equation}
where $\mu_n$ is the multiplication map and $G^{(n)}$ is the cokernel of $\mu_n$.
For any $\ell\in \Spec(\ZZ)$, by Property~(\mref{it:poly}) and the right exactness of tensoring with $\ZZ_\ell$, we obtain the right exact sequence
\begin{equation}
\bigoplus_{j=1}^{n-1} (\ZZ_\ell[Y_\ell]^{(j)}\ot \ZZ_\ell[Y_\ell]^{(n-j)}) \ola{\mu_{n,\ell}} \ZZ_\ell[Y_\ell]^{(n)} \ola{\pi_{\ell,n}} G^{(n)}\ot \ZZ_\ell \to 0,
\mlabel{eq:lcok}
\end{equation}
where $\mu_{n,\ell}$ is again the multiplication map. By Lemma~\mref{lem:grad}.(\mref{it:grad1}) we get $\ZZ_\ell[Y_\ell]^{(n)}=\im(\mu_{n,\ell})\oplus \ZZ_\ell Y_{\ell}^{(n)}$. Thus $G^{(n)}\ot \ZZ_\ell \cong \ZZ_\ell^{|Y_{\ell}^{(n)}|}$ is a free $\ZZ_\ell$-module. By Property~(\mref{it:fin}) and Lemma~\mref{lem:bas}.(\mref{it:mod}), $G^{(n)}$ is a free abelian group of rank $|Y_{\ell}^{(n)}|$. Thus the right exact sequence in Eq.~(\mref{eq:zcok}) splits and we have
$R^{(n)}=\im(\mu_n)\oplus R^{(n)}{}'$ for a free abelian group $R^{(n)}{}'\subseteq R^{(n)}{}$ of rank $|Y_{\ell}^{(n)}|$ such that $R^{(n)}{}'\cong G^{(n)}{}$ under $\pi_n$. Let $Y^{(n)}{}$ be a $\ZZ$-basis of
$R^{(n)}{}'$, $n\geq 1$, and let $Y=\cup_{n\geq 1} Y^{(n)}{}$. Let $R''$ be the subalgebra of $R$ generated by $Y$ and let $R^{(n)}{}''=R''\cap R^{(n)}{}, n\geq 1$. Let $W=\coprod_{n\ge 1} W^{(n)}{}$ be a graded set such that $W^{(n)}{}$ is in bijection with $Y^{(n)}{}$ through a map $\tau_n:W^{(n)}{}\to Y^{(n)}{}$. Define the $\ZZ$-algebra homomorphism
$$\alpha: \ZZ[W]\to R, \quad w\mapsto \tau_n(w), w\in W^{(n)}{}, n\ge 1.$$
It is a graded algebra homomorphism since it is defined piece by piece on each homogeneous subgroup. We have $R''=\im (\alpha)$.

We next prove $R''=R$ by claiming that $R^{(n)}{}\subseteq R''$ for all $n\ge 1$ by induction on $n$. When $n=1$, $R^{(1)}=\ZZ Y^{(1)}$, so the claim is clear. Suppose $R^{(k)}{}\subseteq R''$ for $k<n$.
Then since
$$R^{(n)}{}=\im(\mu_n) + \ZZ Y^{(n)}= \big(\sum_{j=1}^{n-1} R^{(j)}R^{(n-j)}\big) + \ZZ Y^{(n)},$$
we again have $R^{(n)}{}\subseteq R''$ by the induction hypothesis.
Therefore $\alpha$ is a surjective homomorphism of graded algebras. Thus $\alpha$ restricts to give a surjection
$$ \alpha_n: \ZZ[W]^{(n)}{} \to R^{(n)}{}$$
for any $n\ge 1$.

For each $n\geq 1$, $W^{(n)}$ is in bijection with $Y^{(n)}$. Also  $Y^{(n)}$, as a $\ZZ$-basis of $R^{(n)}{}'$ of rank $|Y_0^{(n)}|$, is in bijection with $Y_0^{(n)}$. So $W\cong Y_0$ as graded sets and $\QQ[W]\cong \QQ[Y_0]$ as graded algebras. Hence $\QQ[Y_0]^{(n)}$ has the same $\QQ$-dimension as that of $\QQ[W]^{(n)}$. Also by Property~(\mref{it:poly}),
$R^{(n)}{}\ot \QQ$ has the same $\QQ$-dimension as that of $\QQ[Y_0]^{(n)}{}$. Therefore $R^{(n)}{}\ot \QQ$ and $\QQ[W]^{(n)}{}$ have the same $\QQ$-dimension.
Thus the free abelian groups $R^{(n)}{}$ and $\ZZ[W]^{(n)}{}$ have the same rank.
Thus $\alpha_n$ is an isomorphism for every $n\ge 1$ and hence $\alpha$ is an isomorphism.
\end{proof}

\begin{theorem}
Let $S$ be a finitely generated free abelian semigroup. Then
for $\lambda=\pm 1$, $\sh_{\ZZ,\lambda}(S)$ is a polynomial algebra $\ZZ[Y]$, where $Y=\coprod_{n\geq 1}Y^{(n)}{}$ is a graded set whose homogeneous component $Y^{(n)}{}$ has cardinality $|\Lyn(S)^{(n)}{}|$.
Here $\Lyn(S)^{(n)}{}$ is the set of Lyndon words on $S$ of degree $n$.
\mlabel{thm:intfr}
\end{theorem}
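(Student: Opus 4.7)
The strategy is to verify the hypotheses of the local-global principle Theorem~\ref{thm:grfr} for the graded ring $R=\sh_{\ZZ,\lambda}(S)$ and then invoke it. First, I would equip $R$ with the grading defined in Eq.~(\ref{eq:shdeg}), in which the degree of a pure tensor $w_1\otimes\cdots\otimes w_r\in S^{\otimes r}$ is $\deg(w_1)+\cdots+\deg(w_r)$. Because $S$ is a finitely generated free abelian semigroup, only finitely many elements of $S$ have degree $\le n$, so each $R^{(n)}$ is a finitely generated free $\ZZ$-module with basis the words in $M\tcon(S)$ of total degree $n$. One also checks, directly from the construction in Eq.~(\ref{eq:mshde}) and the recursive definition of $\shprl$, that base change commutes with the mixable shuffle construction: $R\otimes_{\ZZ}\ZZ_\ell = \sh_{\ZZ_\ell,\lambda}(S)$ as graded algebras for every $\ell\in\Spec(\ZZ)$ (here $\ZZ_0=\QQ$).

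Next I would produce the required graded subsets $Y_\ell\subseteq R\otimes\ZZ_\ell$. For $\ell=0$, take $Y_0=\Lyn(S)$; Theorem~\ref{thm:msq2} gives the graded isomorphism $\sh_{\QQ,\lambda}(S)\cong\QQ[\Lyn(S)]$, so hypothesis~(\ref{it:poly}) of Theorem~\ref{thm:grfr} holds at $\ell=0$. For a prime $\ell=p$, the value $\lambda=\pm 1$ is automatically a $p$-adic unit, so Theorem~\ref{thm:isomor}.(\ref{it:pgrad}) applies and gives the graded isomorphism $\sh_{\ZZ_p,\lambda}(S)\cong\ZZ_p[\TEL(S)]$, together with the cardinality identity $|\TEL(S)^{(n)}|=|\Lyn(S)^{(n)}|$. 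Taking $Y_p:=\TEL(S)$, the cardinalities $|Y_\ell^{(n)}|$ are therefore all equal to the finite number $|\Lyn(S)^{(n)}|$ as $\ell$ varies over $\Spec(\ZZ)$, verifying hypothesis~(\ref{it:fin}).

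With both hypotheses in place, Theorem~\ref{thm:grfr} produces a graded subset $Y=\coprod_{n\ge 1}Y^{(n)}\subseteq R$ with $|Y^{(n)}|=|Y_0^{(n)}|=|\Lyn(S)^{(n)}|$ and an isomorphism of graded $\ZZ$-algebras $R\cong\ZZ[Y]$, which is the statement of the theorem.

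The only step that requires some care is the compatibility of base change with both the algebra structure and the grading; once that is settled, the rest is a straightforward bookkeeping of cardinalities and an invocation of the two previously established local results (over $\QQ$ and over each $\ZZ_p$) feeding into the local-global principle. The substantive work has already been done in Theorem~\ref{thm:isomor} and in the proof of Theorem~\ref{thm:grfr}; the present theorem is essentially their combination.
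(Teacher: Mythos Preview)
Your proposal is correct and follows essentially the same approach as the paper's own proof: set $Y_0=\Lyn(S)$ and $Y_p=\TEL(S)$, invoke Theorem~\ref{thm:msq2} at $\ell=0$ and Theorem~\ref{thm:isomor}.(\ref{it:pgrad}) at each prime (using that $\lambda=\pm 1$ is a $p$-adic unit), then apply Theorem~\ref{thm:grfr}. Your added remarks on base change and on each $R^{(n)}$ being free of finite rank are minor elaborations of points the paper leaves implicit.
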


\begin{proof}
We apply Theorem~\mref{thm:grfr} to the graded algebra $R=\sh_{\ZZ,\lambda}(S)$ where the grading is defined by the degree on words in~Eq.(\mref{eq:shdeg}).
For $\ell\in \Spec (\ZZ)$, define
$$ Y_\ell=\left \{\begin{array}{ll}
    \Lyn(S), & \ell = 0,\\
    \TEL(S)(\ell), & \ell \neq 0.
    \end{array} \right . $$
with their grading restricted from $\sh_{\ZZ,\lambda}(S)$. Then by Theorem~\mref{thm:msq2}, $R\ot \QQ \cong\QQ[Y_0]$ as graded algebras. By Theorem~\mref{thm:isomor}, for $\ell \neq 0$,
$R\ot \ZZ_\ell = \ZZ_\ell[Y_\ell]$ as graded algebras. Further, by Theorem~\mref{thm:isomor}.(\mref{it:pgrad}) and its proof,
$|Y_\ell^{(n)}|=|Y_0^{(n)}|, n\geq 1.$
Then our proof is completed by Theorem~\mref{thm:grfr}.
\end{proof}

\subsection{Weight $\lambda$ mixable shuffle algebras for countably generated free abelian semigroups}

We now extend Theorem~\mref{thm:intfr} to the countably infinite generators.

\begin{theorem} Let $X$ be a countable set. Let $F(X)$ be the free abelian semigroup
generated by $X$. Then the algebra $\sh_{\ZZ,\lambda}(F(X))$, $\lambda=\pm
1$, is a polynomial $\ZZ$-algebra.
\mlabel{thm:intfr2}
\end{theorem}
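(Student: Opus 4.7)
The case of finite $X$ is Theorem~\mref{thm:intfr}, so I assume $X = \{x_1, x_2, \ldots\}$ is countably infinite. Put $X_k = \{x_1, \ldots, x_k\}$ and $S_k = F(X_k)$, so $S := F(X) = \bigcup_k S_k$, and write $R = \sh_{\ZZ,\lambda}(S)$. The plan is to refine Theorem~\mref{thm:grfr} to multi-gradings and apply it to $R$; the naive grading by word degree has infinite-rank homogeneous pieces when $X$ is infinite, so I instead use the full $\NN^{(X)}$-grading.

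First I would equip $S$ with its natural $\NN^{(X)}$-multi-grading (each $x \in X$ has multi-degree $e_x$, and $F(X)$ inherits this additively) and extend it to $R$ by $\mathrm{mdeg}(s_1 \ot \cdots \ot s_r) = \sum_i \mathrm{mdeg}(s_i)$. Each multi-homogeneous piece $R^{(\mathbf{n})}$ is finite-rank free over $\ZZ$, because the set of words in $M\tcon(S)$ of a fixed finitely supported multi-degree $\mathbf{n}$ is finite. I would also pick a total order on $S$ that refines the sum $|\mathbf{n}|_1$ and extends the total-degree-then-lex order on each $S_k$ from Proposition~\mref{pp:pexam}(b), so that $\Lyn(S) = \bigcup_k \Lyn(S_k)$ and $\TEL(S) = \bigcup_k \TEL(S_k)$.

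Next I would identify $R \ot \ZZ_\ell$ as a polynomial algebra for each $\ell \in \Spec(\ZZ)$. Since the mixable shuffle construction commutes with the filtered colimit $S = \varinjlim_k S_k$ (it is defined term by term on $\bigoplus_r (\ZZ S)^{\ot r}$, and tensor products commute with filtered colimits), Theorem~\mref{thm:msq} gives $R \ot \QQ = \QQ[\Lyn(S)]$, and for each prime $p$, Theorem~\mref{thm:isomor}(a) (noting that $\lambda = \pm 1$ is a $p$-adic unit) gives $R \ot \ZZ_p = \ZZ_p[\TEL(S)]$. The bijection $u^{\comp{p^i}} \leftrightarrow u^{\ot p^i}$ between $\Lyn(S)$ and $\TEL(S)$ preserves multi-degree, since both sides have multi-degree $p^i\, \mathrm{mdeg}(u)$. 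Consequently $|\Lyn(S)^{(\mathbf{n})}| = |\TEL(S)^{(\mathbf{n})}|$ for every $\mathbf{n}$, so the multi-degree cardinalities of the generating sets $Y_\ell$ are independent of $\ell$.

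Finally I would carry out the $\NN^{(X)}$-multi-graded analog of the proof of Theorem~\mref{thm:grfr}: induct on $|\mathbf{n}|_1 = \sum_x n_x$ applied to the finite-sum right-exact sequence
\[
\bigoplus_{\substack{\mathbf{j}+\mathbf{k}=\mathbf{n}\\ \mathbf{j},\mathbf{k}\neq 0}} R^{(\mathbf{j})} \ot R^{(\mathbf{k})} \to R^{(\mathbf{n})} \to G^{(\mathbf{n})} \to 0,
\]
use Lemma~\mref{lem:bas}(a) together with the $\ZZ_p$-local polynomial structure to conclude $G^{(\mathbf{n})}$ is free of the common rank $|\Lyn(S)^{(\mathbf{n})}|$, choose $\ZZ$-bases $Y^{(\mathbf{n})}$ of complementary free summands of $R^{(\mathbf{n})}$, set $Y = \coprod_{\mathbf{n}} Y^{(\mathbf{n})}$, and conclude $R \cong \ZZ[Y]$ by the same surjectivity-by-induction plus rank-over-$\QQ$ argument as in Theorem~\mref{thm:grfr}. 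The one point demanding care is the routine transcription of the proof of Theorem~\mref{thm:grfr} with $\mathbf{n}$ in place of $n$; since every step (finite-sum cokernel, Lemma~\mref{lem:bas}, rank counting) survives the passage from single to multi-grading, this amounts to checking rather than re-proving.
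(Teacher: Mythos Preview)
Your argument is correct, but it takes a different route from the paper's. The paper writes $\sh_{\ZZ,\lambda}(F(X))$ as the filtered colimit $\varinjlim_k \sh_{\ZZ,\lambda}(S_k)$ over the finite truncations $S_k=F(X_k)$, applies Theorem~\mref{thm:intfr} to each $S_k$ separately, and then proves a compatibility lemma (that $G_k^{(n)}$ is a direct summand of $G_{k+1}^{(n)}$, via the inclusion $\TEL(\ell)(S_k)^{(n)}\hookrightarrow \TEL(\ell)(S_{k+1})^{(n)}$ at each prime $\ell$ together with Lemma~\mref{lem:bas}) in order to choose the polynomial generators $Y_k^{(n)}$ coherently as $k$ grows; the union $\bigcup_k Y_k$ then freely generates the colimit. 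You instead replace the $\NN$-grading by the $\NN^{(X)}$-multigrading, which forces every homogeneous piece to have finite $\ZZ$-rank even when $X$ is infinite, and then run the proof of Theorem~\mref{thm:grfr} verbatim with multidegrees in place of degrees; the colimit enters only implicitly, to transport Theorem~\mref{thm:isomor}(\mref{it:pgrad}) from each $S_k$ to $S$.

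The paper's approach keeps all the hard work confined to the finite-generator case and layers on a single compatibility step; yours is more uniform and sidesteps the compatibility lemma entirely, at the modest cost of checking that the multigraded version of Lemma~\mref{lem:grad}(\mref{it:grad1}) holds (it does, by the same combinatorial argument) and that the decomposition sum over $\mathbf{j}+\mathbf{k}=\mathbf{n}$ is finite (it is, since $\mathbf{n}$ has finite support). One small point worth making explicit in your write-up: Theorem~\mref{thm:isomor}(\mref{it:pgrad}) is stated only for $S\in\fG$, i.e.\ finitely generated $S$, so the identification $R\ot\ZZ_p=\ZZ_p[\TEL(S)]$ for infinite $X$ really does require the colimit step you gesture at, together with the compatibility $\TEL(S_k)\subseteq\TEL(S_{k+1})$ under your chosen order.
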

\begin{proof}
We denote $S=F(X)$ in this proof.
First we fix an order on $X$ such that
$X=\{x_1, x_2,x_3,\cdots \}$ with $x_1< x_2<x_3<\cdots$. Then we define a
degree and an order on $S$ as before. % We keep to use
% the notation $\
For every $k\geq 1$ we write $X_k=\{x_1,\cdots, x_k\}$ and let $S_k$
be the free abelian semigroup generated by $X_k$ that can be
considered as a subgroup of $S$. Then we form a direct system
$\{\sh_{\ZZ,\lambda}(S_k)\}_{k\geq 1}$ and we have
$$\sh_{\ZZ,\lambda}(S)=\dirlim
\sh_{\ZZ,\lambda}(S_k).$$

By Theorem \ref{thm:intfr} for every $k\geq 1$, $\sh_{\ZZ,\lambda}(S_k)$
is a graded polynomial algebra
\begin{equation}
\sh_{\ZZ,\lambda}(S_k)=\ZZ\Big[\coprod_{n\geq 1} Y_k^{(n)}\Big], \mlabel{eqn:poly}
\end{equation} where $Y_k^{(n)}$ is a lifting of
a basis of the quotient
$$G_k^{(n)}=\sh_{\ZZ,\lambda}(S_k)^{(n)}/\sum\limits_{1\leq i< n}
\sh_{\ZZ,\lambda}(S_k)^{(i)}\sh_{\ZZ,\lambda}(S_k)^{(n-i)}$$
to $\sh_{\ZZ,\lambda}(S_k)^{(n)}$. Let $\pi^{(n)}_k$
denote the quotient map $$\sh_{\ZZ,\lambda}(S_k)^{(n)}\rightarrow
G_k^{(n)}.$$

For our purpose we need to choose a special lifting $Y_k^{(n)}$ so that $\{Y_k^{(n)}\}_{k\geq 1}$ form an increasing sequence of subsets
for every fixed $n$. For this we
need the following lemma.

\begin{lemma}
For $n,k \geq 1$, $G^{(n)}_{k}$ is a direct summand of
$G^{(n)}_{k+1}$. \mlabel{lem:dirsum}
\end{lemma}
\begin{proof}  For a fixed prime $\ell$, we have the following commutative diagram
\[\xymatrix{
\ZZ_\ell\TEL(\ell)(S_k)^{(n)}
\ar[rr]\ar[d]^{\simeq}_{\pi^{n}_{\ell,k}} & &
\ZZ_\ell\TEL(\ell)(S_{k+1})^{(n)}
\ar[d]^{\simeq}_{\pi^{n}_{\ell,k+1}}
\\ G^{(n)}_k \ot \ZZ_\ell\ar[rr] & &  G^{(n)}_{k+1} \ot\ZZ_\ell. }\] The two vertical reduction maps are isomorphisms by Eq.~(\mref{eq:natiso}).
The homomorphism in the top row of the above diagram is induced
by the inclusion of sets
$$ \TEL(\ell)(S_k)^{(n)} \rightarrow \TEL(\ell)(S_{k+1})^{(n)}$$ and hence is
injective and identifies the source with a direct summand of the
target. Then the homomorphism
$G^{(n)}_k \ot \ZZ_\ell\to   G^{(n)}_{k+1}\ot\ZZ_\ell$ in the bottom row is also injective
and identifies $ G^{(n)}_k\ot\ZZ_\ell$ with a direct summand of $G^{(n)}_{k+1}\ot \ZZ_\ell $.
By Lemma~\mref{lem:bas}.(\mref{it:mod2}) we obtain the lemma.
\end{proof}

Now we choose our $Y_k^{(n)}$ by induction on $k\geq 1$. We first fix a lifting $Y^{(n)}_1$. For a given $k\geq 1$, suppose we have chosen
$Y^{(n)}_k$. Then $\pi^{(n)}_k(Y^{(n)}_k)$ is a basis of
$G^{(n)}_{k}$. By Lemma \mref{lem:dirsum}, $G^{(n)}_{k}$ is a direct
summand of $G^{(n)}_{k+1}$. In other words, we may write
$$G^{(n)}_{k+1}=G^{(n)}_k\oplus G'^{(n)}_{k+1}.$$ Then
$G'^{(n)}_{k+1}$ is a free abelian group and let $\bas'^{(n)}_{k+1}$
be a basis of $G'^{(n)}_{k+1}$. Let $Y'^{(n)}_{k+1}$ be a lifting of
$\bas'^{(n)}_{k+1}$ to $\sh_{\ZZ_p,\lambda}(S_{k+1})^{(n)}$. Then we can define $Y^{(n)}_{k+1}$ to be the
disjoint union $Y^{(n)}_k \coprod Y'^{(n)}_{k+1}$ since
$\pi^{(n)}_{k+1}(Y^{(n)}_k \coprod Y'^{(n)}_{k+1})$ is a basis of
the free abelian group $G^{(n)}_{k+1}$. This completes the
induction.

Let $$Y_k=\coprod_{n\geq 1}Y_k^{(n)}.$$ Then $Y_k$ is a subset of
$Y_{k+1}$. From the construction and Eq.~(\mref{eqn:poly}) we obtain
$$\sh_{\ZZ,\lambda}(S_k)=\ZZ[Y_k].$$ Therefore
$$\sh_{\ZZ,\lambda}(S)=\dirlim \sh_{\ZZ,\lambda}(S_k)=\dirlim \ZZ [Y_k]=\ZZ [\cup_{k\geq 1} Y_k] $$
is a polynomial $\ZZ$-algebra, as expected.
\end{proof}

\subsection{Free commutative Rota-Baxter algebras with coefficients in $\ZZ$}
\mlabel{ss:rbaz}

\begin{theorem}
Let $X$ be a at most countably many set. Let $F(X)$ be the free abelian semigroup generated by $X$. Let $\lambda =\pm 1$. Then there is a set $\Omega$ of variables such that
\begin{equation}
\sha_{\ZZ,\lambda}(\ZZ[X])\cong \ZZ[\Omega] \oplus N
\mlabel{eq:summ}
\end{equation}
where $N=N_S$ is the subgroup of $\sha_{\ZZ,\lambda}(\ZZ[X])$ spanned by pure tensors of the form
$$w_0\ot \cdots \ot w_r, w_i\in \{1\}\cup F(X), 1\leq i\leq r, w_i=1 \text{\  for some } 1\leq i\leq r, r\geq 1.$$
When $X$ is finite. Then $\Omega=X\cup Y$, where $Y$ is a graded set in bijection with the graded set $\Lyn(F(X))$ of Lyndon words. \mlabel{thm:rbaz}
\end{theorem}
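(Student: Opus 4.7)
The plan is to combine the tensor decomposition of the free commutative Rota--Baxter algebra with the polynomial structure of the subalgebra $\sh_{\ZZ,\lambda}(F(X))$ established in Theorems~\ref{thm:intfr} and~\ref{thm:intfr2}, and then to peel off the submodule of words containing the identity $1 \in \ZZ[X]$ in some non-leading tensor slot. From Theorem~\ref{thm:shua} I will take the $\ZZ$-algebra decomposition $\sha_{\ZZ,\lambda}(\ZZ[X]) = \ZZ[X] \otimes \sh_{\ZZ,\lambda}(\ZZ[X])$. Identifying $\ZZ[X] = \ZZ M\com(X)$ with $M\com(X) = \{1\} \cup F(X)$, a canonical $\ZZ$-basis of $\sh_{\ZZ,\lambda}(\ZZ[X])$ consists of pure tensors $u_1 \otimes \cdots \otimes u_r$, $r \geq 0$, with $u_i \in \{1\} \cup F(X)$.

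Next I will observe that the span of those tensors whose every factor lies in $F(X)$ is exactly $\sh_{\ZZ,\lambda}(F(X))$ and is a subalgebra of $\sh_{\ZZ,\lambda}(\ZZ[X])$. This follows directly from the explicit mixable shuffle formula: every ordinary shuffle of two such tensors again has all factors in $F(X)$, and each merged factor $\lambda a_i b_j$ lies in $F(X)$ because $F(X) \subseteq M\com(X)$ is a sub-semigroup. Letting $N'$ denote the complementary $\ZZ$-submodule spanned by tensors $u_1 \otimes \cdots \otimes u_r$ ($r \geq 1$) with at least one $u_i = 1$, one has the internal $\ZZ$-module direct sum (not an algebra decomposition, since $N'$ is not an ideal)
$$\sh_{\ZZ,\lambda}(\ZZ[X]) = \sh_{\ZZ,\lambda}(F(X)) \oplus N'.$$

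Then I will invoke Theorem~\ref{thm:intfr2} (respectively Theorem~\ref{thm:intfr} when $X$ is finite) to write $\sh_{\ZZ,\lambda}(F(X)) \cong \ZZ[Y]$ for a set $Y$ of polynomial generators, with the graded refinement $|Y^{(n)}| = |\Lyn(F(X))^{(n)}|$ in the finite-$X$ case. Tensoring with $\ZZ[X]$ and using $\ZZ[X] \otimes \ZZ[Y] \cong \ZZ[X \sqcup Y]$ yields
$$\sha_{\ZZ,\lambda}(\ZZ[X]) \cong (\ZZ[X] \otimes \ZZ[Y]) \oplus (\ZZ[X] \otimes N') = \ZZ[\Omega] \oplus N,$$
where $\Omega := X \sqcup Y$ and $N := \ZZ[X] \otimes N'$. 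A basis element of $N$ is then a pure tensor $w_0 \otimes w_1 \otimes \cdots \otimes w_r$ with $w_0 \in M\com(X)$, each $w_i \in \{1\} \cup F(X)$, and $w_i = 1$ for some $1 \leq i \leq r$, matching the description of $N_S$ in the statement.

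All of the genuinely non-trivial content is already contained in Theorems~\ref{thm:intfr} and~\ref{thm:intfr2}; what remains is essentially bookkeeping. The only step that needs explicit verification is the subalgebra/module-complement splitting of $\sh_{\ZZ,\lambda}(\ZZ[X])$ into $\sh_{\ZZ,\lambda}(F(X))$ and $N'$, which is immediate from the explicit mixable shuffle formula together with the fact that $F(X)$ is a sub-semigroup of $M\com(X)$.
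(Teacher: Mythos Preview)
Your proposal is correct and follows essentially the same approach as the paper: split $\sh_{\ZZ,\lambda}(M\com(X))$ as the $\ZZ$-module direct sum $\sh_{\ZZ,\lambda}(F(X))\oplus N'$ (the paper calls your $N'$ by $N^+$), invoke Theorems~\ref{thm:intfr} and~\ref{thm:intfr2} to identify $\sh_{\ZZ,\lambda}(F(X))\cong\ZZ[Y]$, and then tensor with $\ZZ[X]$ to obtain $\ZZ[X\cup Y]\oplus(\ZZ[X]\otimes N')$. Your explicit verification that $\sh_{\ZZ,\lambda}(F(X))$ is a subalgebra via the mixable shuffle formula is a small addition the paper leaves implicit, and your reference to Theorem~\ref{thm:intfr} for the graded bijection $|Y^{(n)}|=|\Lyn(F(X))^{(n)}|$ is more direct than the paper's citation of Theorem~\ref{thm:isomor}.
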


\begin{proof}
By Theorem~\mref{thm:intfr} and Theorem~\mref{thm:intfr2}, $\sh_{\ZZ,\lambda}(F(X))=\ZZ_p[Y]$ for a set $Y$ of variables.
Let $M\com(X)$ be the free commutative monoid generated by $X$. Then $M\com(X)=\{1\}\cup F(X)$. So a word in $\sh_{\ZZ,\lambda}(M\com(X))$ is of the form $w=w_1\ot \cdots \ot w_r$ where either each $w_i$ is in $F(X)$ or at least one of $w_i$ is 1. A word $w$ is in $\sh_{\ZZ,\lambda}(F(X))$ precisely when it is of the first form. We denote $N^+$ to be the subgroup of $\sh_{\ZZ,\lambda}(M\com(X))$ generated by elements of the second form. Then we have
$$\sh_{\ZZ,\lambda}(M\com(X))=\sh_{\ZZ,\lambda}(F(X))\oplus N^+ \cong \ZZ[Y]\oplus N^+.$$
Since $\ZZ[X]=\ZZ M\com(X)$, we have
$$\begin{aligned}
\sha_{\ZZ,\lambda}(\ZZ[X])&=\ZZ[X]\ot \sh_{\ZZ,\lambda}(M\com(X)) \cong \ZZ[X] \ot (\sh_{\ZZ,\lambda}(F(X))\oplus N^+) \\
& \cong \ZZ[X]\ot (\ZZ[Y]\oplus N^+) \cong \ZZ[X\cup Y] \oplus (\ZZ[X]\ot N^+).
\end{aligned}
$$
Then we just need to take $\Omega=X\cup Y$ and $N=\ZZ[X]\ot N^+$ to get the direct sum decomposition in Eq.~(\mref{eq:summ}).

When $X$ is finite, by Theorem~\mref{thm:isomor}, we have $Y$ in the specified form as prescribed.
\end{proof}

As a final note, we elaborate on the significance of Theorem~\mref{thm:rbaz}.
By Theorem~\mref{thm:rbl}, $\sha_{\QQ,\lambda}(A(X))$ is a polynomial $\QQ$-algebra generated by $\overline{\Lyn}(X):=X\cup \{1\ot w\ |\ w\in \Lyn(M\com(X))\}.$ Since $\overline{\Lyn}(X)$ is a part of a $\ZZ$-basis of $\sha_{\ZZ,\lambda}(A(X))$, it follows that $\overline{\Lyn}(X)$ generates a polynomial $\ZZ$-subalgebra of $\sha_{\ZZ,\lambda}(A(X))$. There is no inclusion relation between the polynomial generating set $Y$ in Theorem~\mref{thm:rbaz} and $\overline{\Lyn}(X)$ in Theorem~\mref{thm:rbl} since $Y$ is not the set of Lyndon words, only in bijection with this set. However, the polynomial subalgebra $\ZZ[X\cup Y]$ in Theorem~\mref{thm:rbaz} can be more useful in studying the structure of $\sha_{\ZZ,\lambda}(A(X))$ because of the direct sum decomposition in Eq.~(\mref{eq:summ}). This is similar to the importance of studying direct summands of abelian groups. It is easy to obtain free subgroups in a torsion-free abelian group, such as $\QQ$, but it is more useful to obtain a direct summand that is free. Similarly, there are many polynomial subalgebras in
a free commutative Rota-Baxter algebra $\sha(A)$, but it is more useful to have such a subalgebra that is also a direct summand. For example, in $\sha_{\ZZ,0}(\ZZ)$ which is just the divided power algebra $\oplus_{n\geq 0} \ZZ x_n$ with $x_m x_n=\binc{m+n}{m} x_{m+n}$,
the subalgebra generated by any $f\not\in \ZZ$ is a polynomial algebra, but the algebra itself is not a polynomial algebra, none does it have a polynomial subalgebra as a direct summand. In the case we consider, it would be interesting to find out whether the polynomial algebra summand in Eq.~(\mref{eq:summ}) can be extended to a larger such summand.

%
%\addcontentsline{toc}{section}{\numberline {}References}
%

\end{document}